\NeedsTeXFormat{LaTeX2e}
\documentclass{amsart}
\usepackage{amssymb}
\usepackage{amsmath}
\usepackage{color}
\usepackage{epsfig}
%
%
\newtheorem{mainthm}{Theorem}[]\newtheorem{maincor}[mainthm]{Corollary}
\newtheorem{thm}{Theorem}[section]
\newtheorem*{prob}{Problem}

\newtheorem*{thm*}{Theorem}

\newtheorem{lem}[thm]{Lemma}

\newtheorem{exa}[thm]{Example}
\newtheorem*{exam}{Example}

\newtheorem{prop}[thm]{Proposition}

\theoremstyle{definition}

\newtheorem{defin}[thm]{Definition}
\newtheorem*{rems*}{Remarks}
\theoremstyle{remark}
\newtheorem{rem}[thm]{Remark}

%
\newcommand{\CP}{\mathbb{C\mkern1mu P}}               
\newcommand{\C}{\mathbb{C}}

\newcommand{\N}{\mathbb{N}}

\newcommand{\R}{\mathbb{R}}

\newcommand{\Z}{\mathbb{Z}}

\newcommand{\si}{\sigma}
%
%

%
%

\newcommand{\ti}{\tilde}

\newcommand{\subsub}{{\subset \subset} }

\DeclareMathOperator{\tr}{tr}

\DeclareMathOperator{\Ric}{Ric}\DeclareMathOperator{\ad}{ad}

\DeclareMathOperator{\scal}{scal}

\DeclareMathOperator{\Or}{O}

\DeclareMathOperator{\Rm}{Rm}
\DeclareMathOperator{\Wc}{W}

\newcommand{\lap}{\Delta}
\newcommand{\ep}{\epsilon}
\newcommand{\al}{\alpha}

\newcommand{\de}{\delta}

\newcommand{\gl}{\mathfrak{gl}}\newcommand{\su}{\mathfrak{su}}
\newcommand{\un}{\mathfrak{u}}

\newcommand{\Lg}{\mathfrak{g}}
 \newcommand{\Lm}{\mathfrak{m}}

\newcommand{\Lh}{\mathfrak{h}}

\newcommand{\Ln}{\mathfrak{n}}

\newcommand{\jkm}{[jkm]}

%
%

\DeclareMathOperator{\trace}{tr}
\DeclareMathOperator{\vol}{vol}\DeclareMathOperator{\Ad}{Ad}

\DeclareMathOperator{\dist}{dist}

\DeclareMathOperator{\id}{{id}} 
\DeclareMathOperator{\Weyl}{W}


\newcommand{\of}{\circ}

\newcommand{\eps}{\varepsilon}

\newcommand{\norm}{ \| }

\newcommand{\TS}{\mathcal{S}}
\newcommand{\unm}{\tfrac{1}{2}}
%
\hyphenation{mani-fold sub-mani-fold  }
%
%
\begin{document}


\begin{titlepage}

\title{Optimal curvature estimates for homogeneous Ricci flows}

\author{Christoph B\"ohm}	
\address{University of M\"unster, Einsteinstra{\ss}e 62, 48149 M\"unster, Germany}
\email{cboehm@math.uni-muenster.de}

\author{Ramiro Lafuente} 
\address{University of M\"unster, Einsteinstra{\ss}e 62, 48149 M\"unster, Germany}
\email{lafuente@uni-muenster.de}

\author{Miles Simon}
\address{Otto von Guericke University, Magdeburg, IAN, Universit\"atsplatz 2, Magdeburg 39104, Germany}
\email{msimon@gmx.de}

\begin{abstract}
$\!$We prove uniform curvature estimates for homogeneous Ricci flows: For a solution defined on  $[0,t]$ the norm of the curvature tensor at time $t$ is bounded by the maximum of
$C(n)/t$ and $C(n)(\scal(g(t)) - \scal(g(0)) )$.
This is used to show that
solutions with finite extinction time are Type I,
immortal solutions are Type III and ancient solutions are Type I, where all the constants involved depend  only on the dimension $n$. 
A further consequence is that a non-collapsed homogeneous ancient solution
on a compact homogeneous space emerges from a unique Einstein metric on the same space.

The above curvature estimates are proved using a gap theorem for Ricci-flatness on
homogeneous spaces. 
The proof of this gap theorem is by contradiction  and uses a local $W^{2,p}$ convergence result, which holds without symmetry assumptions.

\end{abstract}

\end{titlepage}

\maketitle
\setcounter{page}{1}
\setcounter{tocdepth}{0}

The proof of Thurston's geometrization conjecture by  Perelman
\cite{Per1}, \cite{Per2}, \cite{Per3} using Hamilton's Ricci  flow \cite{Ha} can
certainly be considered a major break through. There are however interesting related problems which remain open. 
For instance, Lott asked in \cite{Lo2},
whether the  $3$-dimensional Ricci flow detects the homogeneous pieces 
in the geometric decomposition proposed by Thurston.
In the same paper this was proved  to be true for immortal solutions,
assuming a \mbox{Type III}
 behavior of the curvature tensor and a natural bound on the
diameter of the underlying closed oriented manifold. More recently, Bamler 
showed in a series of papers that for the  Ricci flow 
with surgery there exist only finitely many surgery times, 
and that the Type III behavior holds after the last surgery time. 
In many cases, convergence to a geometric piece could be established.
We refer to \cite{Bam} and the papers quoted therein.

Recall that a Ricci flow solution is called \emph{homogeneous}, if it is homogeneous
at every time. In dimension $3$ homogeneous Ricci flows
are well understood: see \cite{IJ}, \cite{KM}, \cite{CSC},  \cite{GP}, \cite{Lo1}. For results in higher dimensions we refer to \cite{IJL}, \cite{BW}, \cite{AN}, \cite{Bu}, \cite{Pa}, \cite{AC}, \cite{L11}, \cite{L13}, \cite{Ar}, among others. Notice that except for \cite{L13}, assumptions on the algebraic structure or on the dimension were made. 

Our first main result is 

\begin{mainthm}\label{main:long}
Let $\left(M^n, g(t)\right)_{t\in[a,b]}$ be a homogeneous Ricci flow solution. Then
the norm of the Riemannian curvature tensor at the final time $b$ can be estimated by
 \begin{eqnarray*}
 	\Vert {\Rm(g(b)) } \Vert_{g(b)} &\leq &  
 	 C(n)\cdot \max \big\{ \,\tfrac{1}{b-a}\,\,,\,\, \scal(g(b)) - \scal(g(a))\, \big\}\,.
 \end{eqnarray*}
\end{mainthm}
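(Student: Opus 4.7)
The plan is to combine three ingredients: the standard monotonicity of scalar curvature along a homogeneous Ricci flow, a doubling-time inequality for $\|\Rm\|$, and the gap theorem for Ricci-flatness of homogeneous metrics announced in the abstract. The last of these -- the statement that there exists $\delta(n)>0$ such that every non-flat homogeneous Riemannian $n$-manifold $(M,g)$ satisfies $|\Ricci|_g \geq \delta(n)\|\Rm\|_g$ -- is the deepest input and is, by the authors' own description, the main point of the paper; I take it as a black box and focus on how it drives the Ricci-flow estimate.

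Write $K := \|\Rm(g(b))\|_{g(b)}$. It suffices to produce dimensional constants $c_0(n), c_1(n)>0$ such that, whenever $K > c_0(n)/(b-a)$, one has $K \leq c_1(n)(\scal(g(b))-\scal(g(a)))$; combining with the trivial bound in the opposite case yields the theorem. Since $g(t)$ is homogeneous, the scalar curvature is spatially constant and the Laplacian term in its evolution vanishes, leaving
$$ \tfrac{d}{dt}\scal(g(t)) = 2\,|\Ricci(g(t))|_{g(t)}^2, $$
so $\scal$ is non-decreasing along the flow.

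Next, the reaction inequality $\tfrac{d}{dt}\|\Rm\| \leq C(n)\|\Rm\|^2$ -- here a pointwise ODE because $\Delta\|\Rm\|^2 = 0$ by homogeneity -- yields a dimensional $c_2(n)>0$ with the following backward stability property:
$$ \|\Rm(g(t))\|_{g(t)} \,\geq\, K/2 \qquad\text{for every } t \in [b-c_2(n)/K,\,b]. $$
Indeed, if $\|\Rm(g(s))\|<K/2$ at some such $s$, integrating $(1/\|\Rm\|)'\geq -C(n)$ from $s$ to $b$ yields $\|\Rm(g(b))\|<K$, contradicting the definition of $K$. Choosing $c_0(n)\geq c_2(n)$ places this sub-interval inside $[a,b]$.

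On that sub-interval the gap theorem gives $|\Ricci(g(t))|_{g(t)}^2 \geq \delta(n)^2 K^2/4$, and combining with the scalar-curvature evolution and its monotonicity,
$$ \scal(g(b)) - \scal(g(a)) \,\geq\, \int_{b-c_2(n)/K}^{b} 2\,|\Ricci(g(t))|^2\,dt \,\geq\, \tfrac12\,c_2(n)\,\delta(n)^2\,K, $$
which finishes the proof with $c_1(n) = 2/(c_2(n)\delta(n)^2)$. The genuine obstacle is the gap theorem itself, whose proof the abstract indicates uses a contradiction argument built on local $W^{2,p}$ convergence for homogeneous metrics together with Alekseevskii--Kimelfeld type rigidity; the coupling step above is essentially a one-line comparison of ODE inequalities against a scalar-curvature integral.
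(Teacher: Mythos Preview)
Your proof is correct and follows essentially the same approach as the paper's. Both arguments combine the Gap Theorem $\Vert \Ric \Vert \geq \delta(n)\Vert \Rm\Vert$, the doubling-time estimate for $\Vert \Rm\Vert$, and the evolution $\scal' = 2\Vert \Ric\Vert^2$; the only cosmetic difference is that the paper bounds $\int_a^b \Vert \Rm\Vert^2\,dt$ from above (via the Gap Theorem on all of $[a,b]$) and from below (via doubling-time on the sub-interval), whereas you apply the Gap Theorem only on the sub-interval to bound $\int \Vert \Ric\Vert^2$ directly from below --- the resulting inequalities are equivalent.
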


Symbols like $c(n),C(n)$, etc.~refer to positive constants which depend only on the dimension. 
A first immediate consequence of the above estimate is that for homogeneous Einstein spaces of a fixed dimension, the Einstein constant controls the norm of the curvature tensor. Notice that this is not true already in the case of cohomogeneity one Einstein spaces with positive Einstein constant \cite{Bo1}.

\begin{maincor}\label{main:cor}
Let $\left(M^n, g(t)\right)_{t\in I}$ be a homogeneous Ricci flow solution. Then
the following holds: If the solution has finite extinction time $T$, i.e.~ $I=[0,T)$, and $\scal(g(0))=1$, then there exists $\delta(n) \in (0,1)$ such that
for all $t \in [\delta(n)\cdot T,T)$
 \[
 	\Vert {\Rm(g(t))} \Vert_{g(t)} \cdot (T-t) \,\,\, \in \,\,\,	[\tfrac{1}{8},C(n)]\,.
 \]
	If the solution is immortal, i.e.~ $I=[0,\infty)$, and $\scal(g(0))=-1$, then for all $t \in I$
	\[
	\Vert {\Rm(g(t))} \Vert_{g(t)}\cdot t  \,\,\, \in \,\,\,  [0,C(n)]\,.
	\] 
If the solution is ancient, i.e.~ $I=(-\infty,-1]$, and $\scal(g(-1))=1$, then for all 
$t \in I$
	\[
	\Vert {\Rm(g(t))} \Vert_{g(t)}\cdot \vert t \vert   \,\,\, \in \,\,\, [c(n),  C(n) ]\,. 
	\] 
\end{maincor}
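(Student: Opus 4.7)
My strategy is to combine Theorem~\ref{main:long} with two auxiliary estimates that are standard for Ricci flow and particularly clean in the homogeneous setting: (i) the scalar-curvature ODE $\partial_t\scal \geq 2\scal^2/n$, which follows from $\partial_t\scal = 2|\Ric|^2 \geq 2\scal^2/n$ together with $\Delta\scal \equiv 0$ by homogeneity; and (ii) Hamilton's pointwise reaction estimate for $\|\Rm\|^2$, which in the homogeneous (Laplacian-free) setting yields $\partial_t\|\Rm\| \leq 8\|\Rm\|^2$. In each of the three regimes I will choose the interval $[a,b]$ in Theorem~\ref{main:long} adaptively so that $1/(b-a)$ and the scalar-curvature difference $\scal(g(b))-\scal(g(a))$ are of comparable, controlled size.

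\textbf{Finite extinction case.} Estimate (ii) rewritten as $\partial_t(1/\|\Rm\|) \geq -8$ and integrated from $t$ to $T^{-}$ (where $\|\Rm\| \to \infty$ by the singularity at $T$) immediately yields $\|\Rm(g(t))\|(T-t) \geq 1/8$. For the upper bound, (i) rewritten as $\partial_t(1/\scal) \leq -2/n$, together with $\scal > 0$ on $[0,T)$ (monotonicity plus $\scal(g(0))=1$), forces $1/\scal(g(t))$ to remain positive up to $T$, hence $\scal(g(t)) \leq n/(2(T-t))$. Applying Theorem~\ref{main:long} on $[2t-T,\,t]$ of length $T-t$ and bounding $\scal(g(t))-\scal(g(2t-T)) \leq \scal(g(t)) \leq n/(2(T-t))$ then gives $\|\Rm(g(t))\| \leq C(n)/(T-t)$. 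The choice $\delta(n)=\tfrac{1}{2}$ ensures $2t-T \geq 0$ for $t \in [\delta T, T)$.

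\textbf{Immortal case.} Estimate (i) rules out $\scal > 0$ anywhere, since otherwise the ODE forces finite-time blow-up, so $\scal \leq 0$ throughout. Setting $\epsilon = -\scal \geq 0$, estimate (i) reads $\partial_t(1/\epsilon) \geq 2/n$ whenever $\epsilon > 0$ and, integrated from $0$ to $t$, gives $\epsilon(g(t)) \leq n/(n+2t)$. Applying Theorem~\ref{main:long} on $[t/2,\,t]$ and using $\scal(g(t)) - \scal(g(t/2)) \leq \epsilon(g(t/2)) \leq n/(n+t)$ yields $\|\Rm(g(t))\| \leq C(n)/t$.

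\textbf{Ancient case.} To integrate (i) from $t$ to $-1$ I need $\scal > 0$ throughout. Monotonicity forces $L := \lim_{s\to-\infty}\scal(g(s)) \geq 0$ (otherwise $\partial_t\scal \geq 2L^2/n > 0$ on the infinite interval $(-\infty,-1]$ would contradict the finiteness of $\scal(g(-1))-L$), and a zero of $\scal$ together with $\partial_t\scal = 2|\Ric|^2$ would produce a Ricci-flat subinterval; the gap theorem referenced in the abstract then forces flatness, contradicting $\scal(g(-1))=1$. Thus $\scal > 0$, so (i) integrated from $t$ to $-1$ gives $\scal(g(t)) \leq n/(n+2(|t|-1))$, and Theorem~\ref{main:long} applied on $[2t,\,t]$ produces $\|\Rm(g(t))\|\cdot|t| \leq C(n)$. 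For the lower bound, the elementary inequality $|\scal| \leq C(n)\|\Rm\|$ gives $\|\Rm(g(-1))\| \geq c_0(n)$, and integrating the one-sided ODE $\partial_t u \leq 8 u^2$ forward from $t_0 \leq -1$ yields $\|\Rm(g(-1))\| \leq \|\Rm(g(t_0))\|/(1-8\|\Rm(g(t_0))\|(|t_0|-1))$ whenever the denominator is positive; a case split on whether $\|\Rm(g(t_0))\|(|t_0|-1)$ exceeds $1/8$ then yields $\|\Rm(g(t_0))\|\cdot|t_0| \geq c(n)$. The main subtlety throughout is ensuring the scalar ODE can be integrated over the full interval in each case; the ancient case is where the gap theorem is truly essential.
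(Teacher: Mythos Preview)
Your proof is correct and follows the same overall strategy as the paper --- combine Theorem~\ref{main:long} with the scalar-curvature ODE $\scal' \geq \tfrac{2}{n}\scal^2$ and the doubling-time estimate $\partial_t\|\Rm\|\leq 8\|\Rm\|^2$ --- but your execution differs in two places worth noting.

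In the finite-extinction case the paper applies Theorem~\ref{main:long} on $[0,t]$, introduces the crossover time $t_0$ at which the two terms in the maximum coincide, obtains $\|\Rm\|\leq C(n)\scal$ for $t\geq t_0$, and then needs the \emph{reverse} scalar ODE $\scal'\leq C(n)\scal^2$ (itself a consequence of the Gap Theorem) to show $t_0\leq \delta(n)T$. Your direct choice of the interval $[2t-T,\,t]$ bypasses all of this and yields the explicit value $\delta(n)=\tfrac12$; this is cleaner. In the ancient case the paper instead sends $a\to-\infty$ in Theorem~\ref{main:long} to get the global bound $\|\Rm(g(t))\|\leq C(n)\scal(g(t))$, and then both the upper and lower estimates follow from the two-sided ODE $\tfrac{2}{n}\scal^2\leq\scal'\leq C(n)\scal^2$; your lower bound via the curvature doubling ODE is an equally valid alternative that avoids the reverse scalar inequality. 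One small remark: to conclude $\scal>0$ you do not actually need the Gap Theorem --- a Ricci-flat subinterval is already stationary under Ricci flow, so forward uniqueness alone gives $\scal\equiv 0$, contradicting $\scal(g(-1))=1$.
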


The above assumptions on the scalar curvature can always be achieved: see Remark \ref{rem_ass}. Note also, that in \cite{Bo3} the first two upper curvature bounds were shown, however with constants depending on the initial metric. 

Regarding homogeneous solutions with finite extinction time, recall that by \cite{BB} a homogeneous space always admits such solutions, if its universal cover is not diffeomorphic to Euclidean space, and that starting from dimension $7$ there exists infinitely many homotopy types of simply-connected homogeneous spaces \cite{AW}. Notice also, that  the homogeneity assumption cannot be dropped in the above corollary, see \cite{Ki}, \cite{DH}, \cite{GZ}. Moreover,
we show in  Lemma \ref{lem_s3} that there cannot exist a uniform upper bound for $\Vert \Rm(g(t))\Vert_{g(t)} \cdot (T-t)$ for small times: On $S^3$ there exists a sequence of homogeneous Ricci flows, such that at 
the initial time the norm of the curvature tensor is one,
the scalar curvature is positive, but the extinction times are unbounded.

Using the above corollary, it then follows from \cite{Na} and \cite{EMT} that
a homogeneous Ricci flow solution with finite extinction time
subconverges, after appropriate scaling, 
to a non flat homogeneous gradient 
shrinking soliton. By \cite{PW}, such a forward limit soliton is a
 finite quotient of a product of a compact homogeneous 
Einstein space and a non-compact flat factor, where the latter might be absent.

Next, recall that by \cite{BB} and \cite{La} \emph{any} homogeneous Ricci flow on a homogeneous space whose universal cover is diffeomorphic to Euclidean space is immortal and that starting in dimension $3$ there are uncountably many homogeneous spaces whose underlying manifold is Euclidean space \cite{Bi}.
For immortal solutions the lower \mbox{bound $0$} in the above estimate is again
optimal: already in dimension three, there are examples where the curvature tensor converges exponentially fast to zero \cite{IJ}.
Concerning forward limit non-gradient solitons of immortal homogeneous solutions we refer to \cite{BL}.

We turn to homogeneous ancient solutions. 
Notice first, that the homogeneity assumption cannot be dropped in the above corollary in view of \cite{Per2}, \cite{BKN}. 
Recall also that homogeneous ancient solutions are called non-col\-lapsed, if the corresponding curvature normalized metrics have a uniform lower injectivity radius bound. In this case, by the above corollary and by \cite{Na}, \cite{CZ} these solutions admit a non-flat homogeneous asymptotic soliton as one goes backwards in time.

All known non-compact ancient homogeneous Ricci flow solutions
are the Riemannian product of a compact ancient solution and a flat factor. In the compact case our  estimates yield the following

\begin{mainthm}\label{main:comp}
The asymptotic soliton of a non-collapsed, homogeneous ancient solution 
on a compact homogeneous space is compact and unique.
\end{mainthm}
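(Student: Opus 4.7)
My approach is to first extract a subsequential asymptotic soliton using Corollary~\ref{main:cor}, then to rule out any Euclidean factor of that soliton via a uniform volume bound for the rescaled metrics, and finally to obtain uniqueness through a finite-dimensional Lojasiewicz--Simon argument on the space of invariant metrics. For any sequence $t_i \to -\infty$, set $\hat g_i(s) := |t_i|^{-1} g(|t_i|\,s)$ on a compact neighbourhood of $s=-1$. Corollary~\ref{main:cor} gives a uniform two-sided bound $\|\Rm(\hat g_i(s))\| \in [c(n),C(n)]$, and non-collapsing supplies a scale-invariant lower injectivity radius bound, so Hamilton's compactness theorem yields a pointed smooth Cheeger--Gromov subsequential limit $(M_\infty,g_\infty(s),p_\infty)$ which is an ancient Ricci flow. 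Homogeneity passes to pointed Cheeger--Gromov limits, so $g_\infty$ is homogeneous; by \cite{Na},\cite{CZ} it is a gradient shrinking soliton; and by \cite{PW} it is, up to finite covers, the product $N\times\R^k$ of a compact Einstein manifold $N$ with a Euclidean factor.

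To show $k=0$, I would argue by volumes. If $k\geq 1$ then $\vol_{g_\infty}(B(p_\infty,R))\to\infty$ as $R\to\infty$, so by Cheeger--Gromov convergence $\vol(M,\hat g(t_i))\to\infty$. I claim that on our compact ancient flow one has the uniform bound $\vol(M,\hat g(t))\leq V_0$. The first step is to show $\scal(g(t))\geq 0$ everywhere and for every $t$: if $\scal_{\min}(t_0)=-a<0$, then applying the maximum principle backward in time to the evolution $\partial_t\scal=\Delta\scal+2|\Ric|^2\geq\Delta\scal+\tfrac{2}{n}\scal^2$ forces $\scal_{\min}$ to diverge to $-\infty$ in finite backward time, contradicting the pointwise bound $|\scal|\leq C(n)/|t|$ of Corollary~\ref{main:cor}. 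Once $\scal\geq 0$, the same maximum principle applied backward to $\scal_{\max}$ yields the sharp asymptotic $\scal_{\max}(g(t))\leq n/(2|t|)+O(1/|t|^2)$, and integrating $\tfrac{d}{dt}\vol(g(t))=-\int_M\scal\,dV$ from $-1$ down to $t$ gives $\vol(g(t))\leq C\cdot|t|^{n/2}$, i.e.\ $\vol(\hat g(t))\leq C$. The product $N\times\R^k$ has finite total volume if and only if $k=0$, so we conclude $k=0$ and $M_\infty$ is compact, Einstein, and diffeomorphic to $M$ for $i$ large.

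For uniqueness I would use that the space $\M^G$ of $G$-invariant Riemannian metrics on the compact homogeneous space $M=G/H$ is an open subset of a finite-dimensional real-analytic manifold, and that the rescaled Ricci flow $\hat g(t)=|t|^{-1}g(t)$ satisfies an autonomous real-analytic ODE on $\M^G$ whose equilibria are the $G$-invariant Einstein metrics of a fixed constant. The previous step shows the orbit is precompact in $\M^G$. Since Ricci flow is a gradient-like flow for Perelman's real-analytic $\lambda$-functional, the finite-dimensional Lojasiewicz--Simon gradient inequality applies and forces convergence to a unique equilibrium as $t\to-\infty$. I expect the main obstacle to be the volume bound in the middle step: the pointwise estimate $|\scal|\leq C(n)/|t|$ of Corollary~\ref{main:cor} is on its own insufficient, and only after combining it with the non-negativity of $\scal$ on our compact ancient flow and the reaction--diffusion form of $\partial_t\scal$ does one reach the sharper asymptotic $\scal_{\max}\leq n/(2|t|)$ that yields bounded rescaled volume.
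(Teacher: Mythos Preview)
Your argument for ruling out the Euclidean factor is correct but proceeds differently from the paper. You derive the sharp ODE estimate $\scal(g(t))\leq n/(2|t|)$ and integrate the volume evolution to bound $\vol(\hat g(t))$; the paper instead argues that a flat factor would force $\scal(\bar g(t_i))\to\infty$ for the \emph{unit-volume} normalization $\bar g$, contradicting the monotonicity of scalar curvature along the volume-normalized flow. Both arguments are valid, and yours is more self-contained. Note, though, that talking about $\scal_{\min}$, $\scal_{\max}$, and the parabolic maximum principle is superfluous here: on a homogeneous flow the scalar curvature is constant in space and satisfies the ODE $\scal'=2\|\Ric\|^2\geq (2/n)\scal^2$, from which both positivity and the sharp upper bound follow immediately.

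The uniqueness argument, however, has a genuine gap. You assert that ``the previous step shows the orbit is precompact in $\M^G$,'' but what you have actually established is that Cheeger--Gromov subsequential limits are compact manifolds diffeomorphic to $M$. This does not by itself give precompactness of the trajectory inside the finite-dimensional cone $\M^G_1$ of unit-volume $G$-invariant metrics: you must still rule out degeneration to the boundary of that cone and identify the limit as a $G$-invariant metric on the \emph{same} presentation $G/H$. The paper handles this by working with the volume-normalized flow---which is exactly the gradient flow of the real-analytic functional $\TS$ on $\M^G_1$---showing $\TS(\bar g(\tau))\geq\eps>0$ along the backward flow (else the asymptotic soliton would be flat), and then invoking the Palais--Smale compactness theorem of \cite{BWZ} to obtain a convergent subsequence in $\M^G_1$. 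Only then does Lojasiewicz's theorem for analytic gradient flows yield uniqueness. Your appeal to Perelman's $\lambda$-functional is also misplaced: the parabolic rescaling $\hat g\mapsto \hat g-2\Ric(\hat g)$ is not in any evident sense a gradient flow for $\lambda$, whereas the volume-normalized flow is literally $\nabla\TS$, which is what makes Lojasiewicz applicable.
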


In fact, we show that non-collapsed ancient solutions on a compact homogeneous space must emanate from a homogeneous Einstein metric on the same space. Examples of such solutions have been described in \cite{BKN}, \cite{Bu}, both with compact and non-compact forward limit soliton. Let us also mention that on a compact homogeneous space which is not a homogeneous torus bundle, ancient homogeneous solutions are non-collapsed: see Remark \ref{rem_noncoll}.
 
Since along the volume-normalized Ricci flow
the scalar curvature is non-decrea\-sing, the Einstein metric from which an ancient solution emanates cannot be
a local maximum of the total scalar curvature functional  restricted
to the space of homogeneous metrics. Conversely, if an Einstein metric is not a local maximum in this sense, there exists an ancient solution emanating from it: see 
Lemma \ref{lem:ancient}.

Next, we turn now to collapsed homogeneous ancient solutions on compact homogeneous spaces.
Since they are collapsed, the asymptotic soliton can only exist in the sense of Riemannian groupoids, as introduced  by Lott \cite{Lo1}: see 
\mbox{Section \ref{sec:exancient}}.
A nice example is given by the Berger metrics on $S^{2n+1}$. They have the non-compact asymp\-totic soliton \mbox{$\CP^{n} \times \R$}, and the round sphere as a compact forward limit soliton.

The following compact homogeneous space is the first example admitting a collapsed ancient solution with non-compact forward limit soliton. Moreover, this example also shows that the geometry of the asymptotic soliton does not depend continuously on ancient solutions.

\begin{exam}
There exists a compact homogeneous space $M^{12}$ which admits a one-parameter family of homogeneous ancient solutions with the same asymptotic soliton $(E^{11}, g_{E_1})\times \R$ and the same forward limit soliton $S^3\times \R^9$. Moreover, in the closure of these solutions there is a single ancient solution emanating from $(E^{11}, g_{E_2})\times \R$.
\end{exam}

 Here, $g_{E_1}$, $g_{E_2}$ are non-isometric Einstein metrics on the compact homogeneous space $E^{11}$. In appropriate coordinates these solutions are depicted in Figure \ref{pic:ancient}.
\begin{figure}[h]
\begin{center}
\includegraphics[width=3.5in,height=2in]{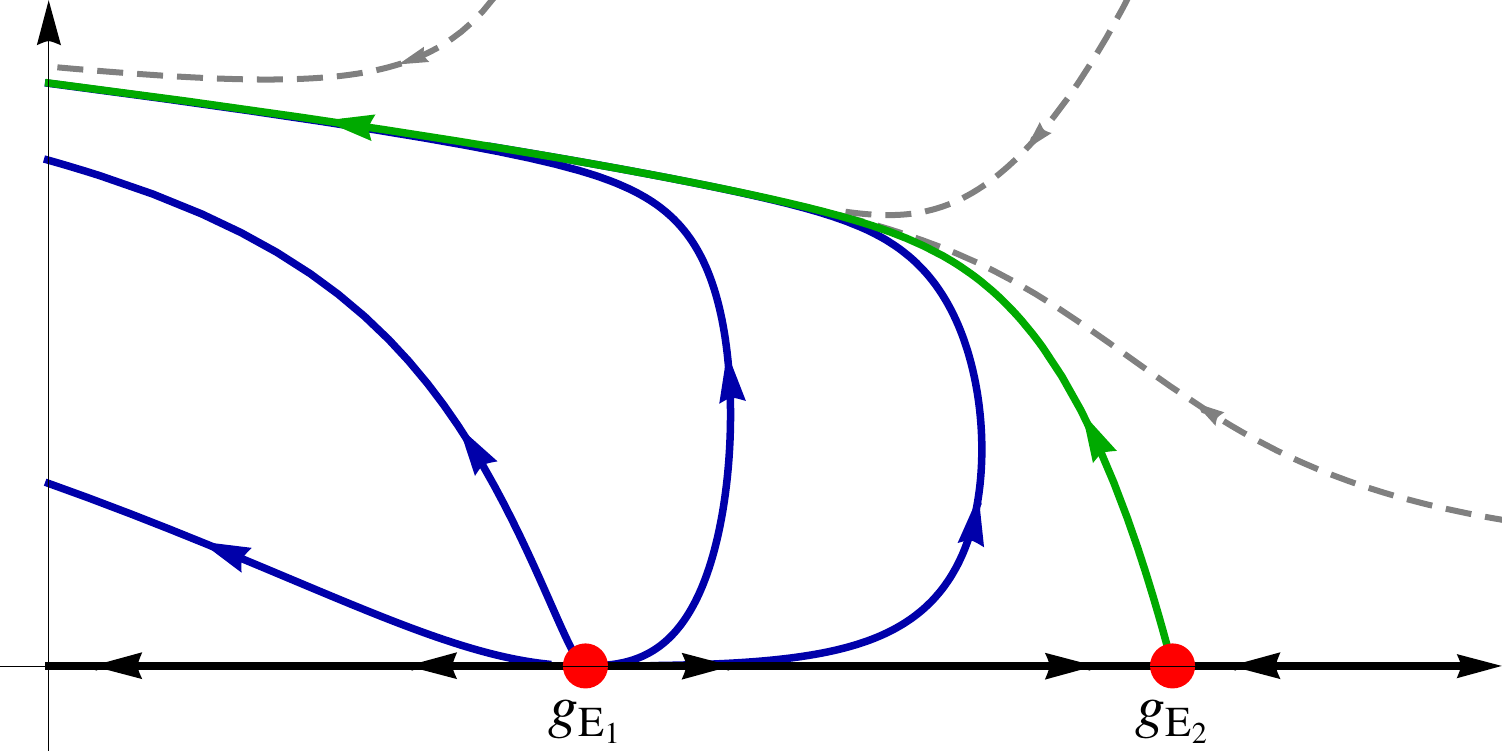}
\end{center}
\caption{A family of ancient solutions on $M^{12}$}
\label{pic:ancient}
\end{figure}
For further details and higher dimensional examples see Section \ref{sec:exancient}.

Our second main result, which is crucial for
the proof of Theorem \ref{main:long}, is

\begin{mainthm}[Gap Theorem]\label{main:gap}
There exists $\epsilon(n) \in (0,1)$ such that for any homogeneous space $(M^n,g)$ the Weyl curvature can be estimated by
\[
             \Vert {\Wc(g) } \Vert_g \leq \big( 1-\epsilon(n)  \big) \cdot \Vert {\Rm(g) }\Vert_g\,.
\]
\end{mainthm}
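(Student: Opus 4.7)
I would prove this gap theorem by contradiction, combining the pointwise orthogonal decomposition $\Rm = \Wc + (\text{Ricci-related terms})$ with the Alekseevsky--Kimel'fel'd theorem on Ricci-flat homogeneous spaces.

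Suppose the conclusion fails: there is a sequence of $n$-dimensional homogeneous spaces $(M_k,g_k,p_k)$ with $\|\Wc(g_k)\|_{g_k} > (1-1/k)\|\Rm(g_k)\|_{g_k}$. After rescaling, I may assume $\|\Rm(g_k)\|_{g_k}(p_k) = 1$. The pointwise orthogonal decomposition of $\Rm$ into Weyl, traceless-Ricci, and scalar parts then forces the last two pieces of $g_k$ to tend to zero at $p_k$; homogeneity propagates this to every point, so $\Ric(g_k) \to 0$ uniformly and $\scal(g_k) \to 0$.

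Next I would invoke the local $W^{2,p}$ convergence statement advertised in the introduction. The uniform bound $\|\Rm(g_k)\| \leq 1$ yields a uniform lower bound on the harmonic radius at $p_k$ (no symmetry needed). In $g_k$-harmonic coordinates on a fixed small Euclidean ball $B$ the Ricci equation is elliptic in the metric, and the bounds on $\Ric(g_k)$ give uniform $W^{2,p}$ bounds on the components of $g_k$. After passing to a subsequence, $g_k \to g_\infty$ in $W^{2,p}(B)$, and since $\Ric(g_k) \to 0$ in $L^p$ the limit solves $\Ric(g_\infty) = 0$ weakly. By elliptic regularity, $g_\infty$ is smooth and Ricci-flat on $B$, with $\|\Rm(g_\infty)\|(0) = 1$.

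To reach a contradiction I must recover enough symmetry in the limit. On each $(M_k,g_k)$ I would choose local Killing fields $X_k^{(1)},\dots,X_k^{(n)}$ near $p_k$ whose values at $p_k$ form a $g_k$-orthonormal frame, normalizing the skew-symmetric parts $\nabla X_k^{(i)}(p_k)$ by minimizing among all Killing fields having the prescribed value at $p_k$. Via the identity $\nabla^2 X = R(\cdot, X)\cdot$, the uniform curvature bound and this normalization yield uniform $C^{k,\alpha}$ bounds on the $X_k^{(i)}$ over $B$; extracting limits gives Killing fields $X_\infty^{(i)}$ for $g_\infty$ whose values span $T_0B$. Thus $g_\infty$ is locally homogeneous near $0$, and by the local form of the Alekseevsky--Kimel'fel'd theorem (apply it on a simply connected Riemannian development of the germ) it must be flat. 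This contradicts $\|\Rm(g_\infty)\|(0) = 1$.

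The main obstacle is arranging these uniform bounds on the local Killing fields in a possibly collapsing sequence $(M_k,g_k)$: the dimensions and structure of the isometry algebras of the $g_k$ may vary wildly with $k$, so the chosen fields and their normalizations must depend only on local geometric data near $p_k$ rather than on any global algebraic input. This is precisely the situation the local $W^{2,p}$ convergence without symmetry assumptions is designed to handle, and I expect its proof — rather than the contradiction step itself — to carry most of the analytic weight.
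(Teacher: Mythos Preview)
Your overall strategy matches the paper's: argue by contradiction, pass to a Ricci-flat limit, show that limit is locally homogeneous, and invoke that locally homogeneous Ricci-flat metrics are flat (the paper cites Spiro~\cite{Sp} for this local statement). The genuine gap is the sentence ``The uniform bound $\|\Rm(g_k)\| \leq 1$ yields a uniform lower bound on the harmonic radius at $p_k$.'' This is false: homogeneous spaces with bounded curvature can collapse (Berger spheres, thin flat tori), so neither the injectivity radius nor the harmonic radius is controlled by curvature alone, and the $W^{2,p}$ convergence theorem you are invoking (Theorem~\ref{main:harm}) explicitly requires two-sided volume bounds $v r^n \leq \vol(B_r(x)) \leq V r^n$ that you have not secured. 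You correctly flag collapsing as the main obstacle, but you misplace it: the difficulty is not in bounding the Killing fields, it is in getting any manifold-level convergence at all.

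The paper's fix is to pull back by the exponential map to a ball $\hat B_\pi(0_p) \subset T_pM$; since $|K| \leq 1$ this is an immersion, and on the resulting \emph{geometric model} the injectivity radius at the origin is $\pi$ (indeed $\pi - r$ at every point at distance $r$; see Lemma~\ref{lem:injest}). This uncollapses the sequence and supplies the required volume bounds (Remark~\ref{volumeremark}); the normalization is also chosen so that $\|\Rm\|$ is small enough to make the $L^{n/2}$ hypothesis of Theorem~\ref{main:harm} hold on a ball of radius~$3$. On these geometric models the paper then passes local \emph{isometries}, not Killing fields, to the limit: Lemma~\ref{lem:extend} uses real-analyticity of locally homogeneous metrics to show that the local isometries extend to balls of a uniform size, and one takes limits of these maps by a diagonal/Arzel\`a--Ascoli argument. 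Your Killing-field route would also work at that stage, but only after the lifting step has been performed.
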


It follows that a  homogeneous Ricci flat space is flat, a result which was proved
by Alekseevskiĭ and Kimel{'}fel{'}d \cite{AK} in 1975. But it also shows
that a non-flat homogeneous space cannot be ``too'' Ricci flat.
The optimal gap size $\epsilon(n)$ is unknown, but converges to $0$ as $n\to \infty$: see Section \ref{sec:ex}.
It is worthwhile mentioning that the Gap Theorem is equivalent to the statement
\[
           \Vert \Rm(g) \Vert_g \leq C(n) \cdot  \Vert \Ric(g) \Vert_g\,. 
\]

The Gap Theorem is proved by contradiction.
We show that a contradiction sequence
subconverges locally in $C^{1,\alpha}$-topology to a smooth local
limit space,
when assuming norm-normalized curvature tensors. Such a local limit is a smooth Ricci flat metric.
Since it is also locally homogeneous, it must be flat \cite{Sp}.
On the other hand, as already remarked
by Anderson \cite{And},
subconvergence can even be assumed in $W^{2,p}$-topology for some $p>n/2$,
which yields a positive lower bound for the norm of the curvature tensor.

Since the corresponding curvature estimates might be of independent
interest we state them here. We would like to mention, that
in the following theorem there are no symmetry or completeness assumptions.

\begin{mainthm}\label{main:harm}
Given $0< v\leq V$ and $p\in (n/2,\infty)$ , there exists a constant $\eps=\eps(v,V,n,p)>0$ such that the following holds.
Let $(D^n_i,g_i,x_i)_{i \in \N}$ be a sequence of smooth 
manifolds,  such that $B_1^{g_i}(x_i) $  is compactly
contained in $D^n_i$ for all $i \in \N$. Assume that 
$v r^n \leq \vol(B_r^{g_i}(x)) \leq V r^n$
for all $r \leq 1 $, for all $B_r^{g_i}(x) \subseteq B_1^{g_i}(x_i)$,
and
\begin{eqnarray*}
  \lim_{i\to\infty} \,\, \int_{B_1^{g_i}(x_i)} \Vert\Ric(g_i)\Vert^{p} d\mu_{g_i}  = 0  
\quad \mbox{ and } \quad
 \int_{ B_{1}^{g_i}(x_i)  } \Vert \Rm(g_i) \Vert^{n/2} d\mu_{g_i} \leq \eps\,.
\end{eqnarray*}
Then, for all $s \in (0,1)$, $(B_s^{g_i}(x_i), g_i, x_i)_{i\in\N}$ subconverges in the pointed $W^{2,p}$-topo\-logy to a $C^\infty$-smooth limit manifold
$(B_s^{g_\infty}(x), g_\infty, x)$, and we have
\begin{eqnarray*}
  \lim_{i\to \infty} \int_{B_s^{g_i}(x_i)}  \Vert \Rm(g_i)\Vert^{p} d\mu_{g_i}
  &=&
   \int_{B_s^{g_\infty}(x)} \Vert \Rm(g_\infty)\Vert^{p} d\mu_{g_\infty} \,.
\end{eqnarray*}
\end{mainthm}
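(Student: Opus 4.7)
The plan is to combine $\eps$-regularity for harmonic coordinates with $L^p$-elliptic regularity applied to the Ricci equation, and then upgrade weak compactness to strong $W^{2,p}$ convergence.

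First I would use the smallness hypothesis $\int \|\Rm(g_i)\|^{n/2}\,d\mu_{g_i} \leq \eps$ together with two-sided volume bounds to obtain harmonic coordinate charts of uniform size. Specifically, by the $\eps$-regularity results of Anderson (see also Bando--Kasue--Nakajima, Petersen), choosing $\eps=\eps(v,V,n,p)$ small enough, one can find for every $y\in B_s^{g_i}(x_i)$ a harmonic chart $\phi_i\colon B_{r_0}(0)\subset\R^n \to U_i\subset B_1^{g_i}(x_i)$ of radius $r_0=r_0(v,V,n)>0$, in which $(g_i)_{\alpha\beta}$ is uniformly $C^{0,\alpha}$-close to the Euclidean metric. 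This is where the hypothesis $p>n/2$ and the small $L^{n/2}$ curvature bound are truly used: they guarantee existence and positive lower bound for the harmonic radius independently of $i$.

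Second, in these harmonic coordinates the Ricci tensor satisfies the quasilinear elliptic equation
\begin{equation*}
-\tfrac{1}{2}\, g_i^{kl}\,\partial_k\partial_l (g_i)_{\alpha\beta} \;+\; Q_{\alpha\beta}(g_i,\partial g_i) \;=\; (\Ric(g_i))_{\alpha\beta},
\end{equation*}
where $Q$ is smooth in $g_i$ and quadratic in $\partial g_i$. Since $g_i^{kl}$ is uniformly $C^{0,\alpha}$ and $\partial g_i$ is uniformly bounded (from the harmonic radius bound), and since the right-hand side is bounded in $L^p$ by hypothesis, Calder\'on--Zygmund $L^p$-estimates give uniform $W^{2,p}$ bounds for $g_i$ on any slightly shrunken ball. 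As $p>n/2$, the embedding $W^{2,p}\hookrightarrow C^{1,\alpha'}$ then provides uniform $C^{1,\alpha'}$ control. A standard diagonal extraction over an exhaustion of $B_s^{g_\infty}(x)$ yields a subsequence converging weakly in $W^{2,p}_{\mathrm{loc}}$ and strongly in $C^{1,\alpha''}_{\mathrm{loc}}$ to a $C^{1,\alpha''}$ limit metric $g_\infty$.

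Third I would identify the limit as $C^\infty$ and prove convergence of the $L^p$ curvature integrals. Taking the limit in the Ricci equation, using strong $C^{1}$ convergence of coefficients and $L^p$ convergence $\Ric(g_i)\to 0$, one sees that $g_\infty$ is weakly Ricci-flat in harmonic coordinates; the associated elliptic system then has smooth coefficients, so $g_\infty$ is real-analytic by standard bootstrapping. To pass from weak to strong $W^{2,p}$ convergence, I would apply the elliptic equation to $g_i-g_\infty$: since the coefficients converge in $C^{1,\alpha''}$ and the right-hand side $\Ric(g_i)$ converges to $0$ strongly in $L^p$, Calder\'on--Zygmund yields a Cauchy estimate in $W^{2,p}_{\mathrm{loc}}$. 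Consequently $\partial^2 g_i\to \partial^2 g_\infty$ strongly in $L^p$, so the algebraic expression for $\|\Rm(g_i)\|$ in $(g_i,\partial g_i, \partial^2 g_i)$ converges strongly in $L^p$, giving the claimed convergence of integrals.

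The main technical obstacle is really the second half: producing \emph{strong} $W^{2,p}$ convergence rather than merely weak compactness. Weak convergence of the second derivatives is not enough to guarantee convergence of $\int\|\Rm\|^p$, and for the application to the Gap Theorem (where one needs a definite positive lower bound on the limit's curvature norm) this strong convergence is exactly what is needed. It is available because $\Ric(g_i)\to 0$ strongly in $L^p$ --- a hypothesis somewhat stronger than a mere $L^p$ bound, and the reason the statement distinguishes the two curvature conditions rather than combining them.
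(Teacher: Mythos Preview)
Your overall strategy matches the paper's: obtain a uniform harmonic radius bound from the small $L^{n/2}$-curvature hypothesis, use $L^p$-elliptic regularity for the Ricci equation in harmonic coordinates to get $W^{2,p}$ bounds, and then upgrade to strong $W^{2,p}$ convergence via a Cauchy-type argument applied to differences of metrics. The paper proves the harmonic-radius bound from scratch (Theorem~\ref{thm:har}, by a blow-up contradiction argument) rather than citing it, since the version needed --- under $L^p$ rather than pointwise Ricci bounds --- is not quite the classical Anderson statement; but this is a difference of packaging rather than of method.

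There is, however, a genuine gap in your sketch: the embedding $W^{2,p}\hookrightarrow C^{1,\alpha'}$ you invoke requires $p>n$, not merely $p>n/2$. For $p\in(n/2,n)$ one only gets $W^{2,p}\hookrightarrow C^{0,\alpha}$, so the claimed ``uniform $C^{1,\alpha'}$ control'' and the subsequent ``coefficients converge in $C^{1,\alpha''}$'' are unavailable in that range. The paper treats this carefully by distinguishing the cases $p\in(n/2,n)$ and $p\geq n$. In the former, one replaces your Sobolev embedding by Rellich--Kondrachov compactness $W^{2,p}\hookrightarrow W^{1,q}$ for any $q<p^{*}=np/(n-p)$, and the point is that $p>n/2$ is exactly the condition $p^{*}>2p$, so $q=2p$ is admissible. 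This gives strong $L^{2p}$ convergence of $\partial g_i$, which is precisely what is needed to control the quadratic term $Q(g,\partial g)$ in $L^p$ and to run the Cauchy estimate for $g_i-g_{\tilde\imath}$ (or $g_i-g_\infty$, as you phrase it) in $W^{2,p}$. With this correction the rest of your outline goes through.
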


At the moment  no algebraic proof of the Gap Theorem is known, 
not even for the fact that Ricci flat homogeneous spaces are flat. Hence
we propose the following

\begin{prob}
Provide an algebraic proof for the Gap Theorem.
\end{prob}

The paper is organized as follows: In Section \ref{sec:lochom} we prove the Gap
Theorem using Theorem \ref{main:harm}, whose proof is provided
in Section \ref{sec:conv}. In Section \ref{sec:appl}
we show how \mbox{Theorem \ref{main:long}} can be deduced from Theorem \ref{main:gap} and
we give the proof of Corollary \ref{main:cor}.
In Section \ref{sec:comancient} we prove Theorem \ref{main:comp}, and in Section \ref{sec:exancient}
and we will provide examples of homogeneous ancient solutions.
Finally, in Section \ref{sec:ex} 
examples of left-invariant metrics on solvable Lie groups are given, 
which show that the constant $\epsilon(n)$ in the Gap Theorem  
must converge to zero for $n \to \infty$.

The first author would like to thank Claude Le'Brun and Lei Ni for helpful comments.
The authors would like to thank Norman Zerg\"ange for pointing out that the weak convergence theorem (see Section 2) was not correctly stated in the first version of this paper.


\section{Locally and globally homogeneous spaces}\label{sec:lochom}

In this section we define locally homogeneous spaces and show that they
are real analytic Riemannian manifolds. Moreover, we provide injectivity radius estimates
and a result which shows how to extend local isometries on simply connected domains.
All of this is used to prove Theorem \ref{thm:homconv}: there, we show that a contradiction
sequence to Theorem \ref{main:gap}, lifted to the tangent spaces, 
has to subconverge in the {$C^{1,\alpha}$-topology} to a Ricci-flat space which is locally homogeneous and thus flat. Then at the end of the section we prove Theorem \ref{main:gap}.

A Riemannian manifold $(M^n  ,g)$ is called globally homogeneous if for all points
$p,q \in M^n$ there exists an isometry $f_{p,q}$ of $(M^n,g)$ mapping $p$ to $q$. In other words, the
isometry group acts transitively on $M^n$.
It is a classic result that any globally homogeneous Riemannian manifold is complete.

A Riemannian manifold $(M ^n,g)$ is called \emph{locally homogeneous} if
for all $p,q \in M^n$ there exists $\eps=\eps_{p,q} >0$, depending possibly on $p$ and $q$, 
such that $B_{\eps}(p)$ and $B_{\eps} (q)$ are isometric with the induced metric.
Here, $B_{\eps}(p)$ denotes the open $\eps$-ball around $p$ in $(M^n,g)$.
Notice that a locally homogeneous manifold is not necessarily complete. Moreover, recall that there exists (incomplete) locally homogeneous manifolds which are not locally isometric to any globally homogeneous manifold, see \cite{Kow}.

\begin{lem}\label{lem:analytic}
A locally homogeneous space $(M^n,g)$ is a real analytic Riemannian manifold.
\end{lem}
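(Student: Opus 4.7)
The plan is to reduce real-analyticity to the existence of a full system of local Killing vector fields near each point, and then to realize the metric in analytic coordinates built from their flows.

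First, I would fix $p \in M^n$ and set up harmonic coordinates around $p$. In these coordinates the metric satisfies the quasi-linear elliptic system
\[
-\tfrac{1}{2}\, g^{kl}\partial_k \partial_l g_{ij} + Q_{ij}(g,\partial g) \;=\; R_{ij},
\]
where $Q_{ij}$ is a universal analytic expression in $g$, $g^{-1}$ and $\partial g$, and $R_{ij}$ is the Ricci tensor. Standard elliptic bootstrap upgrades $g$ from $C^{1,\alpha}$ to $C^\infty$; this reduces the problem to promoting smoothness to real-analyticity.

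The crucial geometric input is the construction, in a small neighborhood $U$ of $p$, of smooth vector fields $X_1,\dots,X_n$ satisfying $\mathcal{L}_{X_k}g = 0$ and with $\{X_k|_p\}$ a basis of $T_pM^n$. Starting from the hypothesis that every pair of nearby points admits an isometry of balls around them, one lifts these isometries to the orthonormal frame bundle, where a local isometry is uniquely determined by its value at a single frame, and selects from this lift a smoothly parameterized $n$-parameter family of local isometries through $p$. Differentiating the family in its parameter produces the desired Killing fields. Since any Killing field is determined by its $1$-jet at a point, the Lie algebra $\mathfrak{g}$ generated by $\{X_1,\dots,X_n\}$ under the Lie bracket is finite-dimensional, of dimension at most $n + \binom{n}{2}$.

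Once such a local Killing algebra is in hand, real-analyticity follows by a standard Lie-theoretic argument. Pick a basis $Y_1,\dots,Y_d$ of $\mathfrak{g}$ with $\{Y_1|_p,\dots,Y_n|_p\}$ spanning $T_pM^n$, and consider
\[
\Phi(t_1,\dots,t_n) \;:=\; \varphi^{Y_1}_{t_1} \circ \cdots \circ \varphi^{Y_n}_{t_n}(p),
\]
where $\varphi^{Y}_t$ is the flow of $Y$. The flows of elements of a finite-dimensional Lie algebra of vector fields satisfy a Lie-type system of ODEs and are hence real-analytic in the parameters; consequently $\Phi$ is a real-analytic diffeomorphism from a neighborhood of $0\in\R^n$ onto a neighborhood of $p$. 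In the coordinates defined by $\Phi$, the equations $\mathcal{L}_{Y_k}g = 0$ together with the value $g|_p$ determine $g$ explicitly as a real-analytic tensor field.

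The principal obstacle is the middle step: extracting smoothly varying local Killing fields from the purely geometric point-to-point isometry hypothesis. This is a local Myers--Steenrod / Nomizu-type statement, and it is where essentially all the geometric content of the lemma sits; once it is available, the analytic coordinate construction is routine.
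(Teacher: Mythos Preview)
Your approach differs from the paper's and is sound in outline, though two steps need tightening. The paper instead uses the Tricerri--Vanhecke characterization: local homogeneity is equivalent to the existence of an Ambrose--Singer connection $\nabla^*$ (a metric connection with parallel torsion and curvature). It then cites Kobayashi--Nomizu (Vol.~I, Ch.~VI, Thm.~7.7) to conclude that both $M$ and $\nabla^*$ are real analytic, and finishes by observing that along straight lines in an analytic chart the metric coefficients $\hat g_{jk}$ satisfy a \emph{linear} ODE with analytic coefficients (the Christoffel symbols of $\nabla^*$, using that $\nabla^*$ is metric), whence $g$ is analytic. Your local-Killing-algebra route is the other classical path --- effectively realizing $M$ locally as $G/H$ --- and has the virtue of making the homogeneous structure explicit; the paper's route has the virtue that the hard structural step is packaged as a single citation and avoids extracting a smooth family of isometries from the pointwise hypothesis.

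Two loose ends in your write-up. First, Step~1 is unnecessary: a Riemannian manifold is smooth by hypothesis, so there is nothing to bootstrap. Second, and more substantively, the sentence ``$\Phi$ is a real-analytic diffeomorphism'' is circular --- $M$ carries no analytic structure yet. What you should say is that $\Phi$ \emph{defines} the candidate analytic chart; then one must verify that the $Y_k$ have analytic components in this chart. This is where the finite-dimensional Lie algebra actually enters: $\Phi_*\partial_j$ equals $\Ad\big(\exp(t_1Y_1)\cdots\exp(t_{j-1}Y_{j-1})\big)Y_j$, an analytic $\Lg$-valued function of $t$, and inverting gives analytic components for the $Y_k$. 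The Killing equations $\mathcal{L}_{Y_k}g=0$ then become a linear first-order system for the functions $g(Y_k,Y_l)$ with analytic coefficients, and analyticity of $g$ follows --- this final ODE step is in fact exactly parallel to the paper's. One must also check analytic compatibility of the charts centered at different base points. As for Step~2, you correctly flag it as the crux; in practice one cites Singer or Nomizu rather than redoing the frame-bundle argument, just as the paper cites Tricerri--Vanhecke for its structural input.
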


 \begin{proof}
By \cite{TVh} local homogeneity of $(M^n,g)$ is equivalent
to the existence of an Ambrose-Singer-connection $\nabla^{ *} \! \!$, in short
AS-connection. An AS-connection is a metric connection,
which has parallel torsion and parallel curvature. Now by Theorem 7.7 from Chapter VI in \cite{KN1}
it follows that both $M^n$ and the AS-connection $\nabla^*$ are real analytic. 
It remains to show that the Riemannian metric $g$ is real analytic as well.

Let $x:U \to \R^n$ be a chart from the analytic atlas of $M^n$, let $V:=x(U)$
and let $(e_1,...,e_n)$ denote the standard basis on $\R^n$.
Pulling back the metric $g\vert_U$ by $x^{-1}$ to $V$ we obtain 
an metric $\hat g=(\hat g_{jk})_{1\leq j,k \leq n}$ on $V$ with $\hat g(e_j,e_k)=\hat g_{jk}$. 
Pulling back the connection $\nabla^*$
to $V$ we get an analytic connection $\hat \nabla ^*$ on $V$. Hence the
Christoffel symbols $\hat \Gamma_{ij,k}^*:V \to \R$ of $\hat \nabla^*$ 
are real analytic, $1 \leq i,j,k \leq n$.

Recall that $\hat\nabla^*$ is a metric connection,
that is for $1 \leq i,j,k \leq n$ we have
\[
  e_i \hat g(e_j,e_k)=\hat g(\hat \nabla^*_{e_i}e_j,e_k)+\hat g(e_j,\hat \nabla^*_{e_i}e_k)\,.
\]
Let $v_0 \in V$, $v \in \R^n$ with $\Vert v\Vert_{\rm std}=1$ 
and $c_v:(-\eps,\eps)\to V\,\,;\,t \mapsto v_0 + t\cdot v$.
We set $\hat g_{jk}(t):=\hat g_{jk}(c_v(t))$ for $1\leq j,k\leq n$. Then
the vector $\hat G(t)=(\hat g_{11}(t),...,\hat g_{nn}(t))$ satisfies a linear ordinary differential equation
$\hat G'(t)=\hat A(t)\cdot \hat G(t)$, where $\hat A(t)$ is real analytic. 
Now by Theorem 10.1 in  \cite{Ta} the solution $\hat G(t)$ 
is real analytic and defined on the entire interval $(-\eps,\eps)$, that is
\[
  \hat g_{jk}(t)=\sum_{l=0}^\infty \tfrac{\hat g_{jk}^{(l)}(0)}{l!}\cdot t^l,
\] 
for all $1 \leq j,k \leq n$ and all $t \in (-\eps,\eps)$.
A computation shows that $\hat g_{jk}(0)=\hat g_{jk}(v_0)$,
$\hat g_{jk}^{(1)}(0)=\langle (\nabla \hat g_{jk})_{v_0}, v\rangle$ and
\[
  \hat g_{jk}^{(2)}(0)=\tfrac{d}{dt}\vert_{t=0}\left\langle (\nabla \hat g_{jk}(c_v(t))),v\right\rangle
   =\big({\rm Hess} (\hat g_{jk})\big)_{v_0}(v,v)\,.
\]
Inductively we get corresponding formulae for the higher derivatives. This shows
that at the points $c_v(t)$ the functions $\hat g_{jk}$ can be written as a power series.
Since this works for any $v$ with $\Vert v\Vert_{\rm std}=1$
we deduce that the metric coefficients $\hat g_{jk}$ are real analytic. This shows the claim.
\end{proof}

Next, we consider globally homogeneous spaces $(M^n,g)$ with sectional curvature 
bound
$\vert K_g \vert \leq 1$ at one and hence any point of $M^n$.
We consider also for a point $p\in M^n$ the Riemannian exponential map
$\exp_p:T_pM^n \to M^n$. Since $|K_g| \leq 1$, by the Rauch comparison theorems
\[
  \exp_p\vert_{\hat B_\pi(0_p)}:\hat B_\pi(0_p) \to B_\pi(p)
\]
is an immersion. Hence we can pull back the metric $g\vert_{B_\pi(p)}$
to a metric $\hat g$ on $\hat B_\pi(0_p)\subset T_pM^n$. The metric
$\hat g$ is locally homogeneous, clearly incomplete, but still real
analytic by Lemma \ref{lem:analytic}. Here we used that the
exponential map is analytic: see Proposition 10.5 in \cite{He}.

\begin{defin}
We call $(\hat B_\pi(0_p),\hat g)$ a \emph{geometric model} for the globally homogeneous space $(M^n,g)$.
\end{defin}

We mention here  that any homogeneous space has an associated {\it infinitesimal model}
which encodes the algebraic data of the space: see \cite{TVh}.

Using once again that 
$\vert K_{\hat g}\vert \leq 1$, we see that ${\rm inj}_{\hat g}(x)
\geq i(r)>0$ for any $x \in \hat B_\pi(0_p)$
in view of \cite{CLY} or \cite{CGT}. Here
$r=d_{\hat g}(0_p,x)=\Vert x\Vert$ and $i:[0,\pi)\to \R_+$ is an explicit continuous function
with $i(r) \leq \pi-r$. Notice that $i$ does not depend on the particular choice of the 
local model $(\hat B_\pi(0_p),\hat g)$.

For the convenience of the reader we will provide a proof of a much stronger estimate.

\begin{lem}\label{lem:injest}
If  $(\hat B_\pi(0_p),\hat g)$ is a geometric model and $x \in \hat B_\pi(0_p)$ with
$r=d_{\hat g}(0_p,x)$, then $i(r)= \pi -r$.
\end{lem}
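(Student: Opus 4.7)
The plan is to establish $\inj_{\hat g}(x)\geq \pi-r$ by proving two complementary facts: (a) every unit-speed $\hat g$-geodesic from $x$ can be continued on $[0,\pi-r)$ inside $\hat B_\pi(0_p)$, and (b) $\exp^{\hat g}_x$ is injective on $B_{\pi-r}(0)\subset T_x\hat B_\pi(0_p)$. The conjugate radius at $x$ is automatically $\geq \pi$ by Rauch, since $|K_{\hat g}|\leq 1$, so no conjugate-point obstruction enters. The crucial preliminary, which underpins both parts, is that $\exp^{\hat g}_{0_p}$ coincides with the identity on $\hat B_\pi(0_p)$: by construction of the pullback metric, the radial rays $t\mapsto tv$ in $T_pM$ are $\hat g$-geodesics from $0_p$, so $\inj_{\hat g}(0_p)=\pi$ and $d_{\hat g}(0_p,y)=\|y\|$ on the whole geometric model.

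Ingredient (a) then follows from the triangle inequality applied to a unit-speed $\hat g$-geodesic $\gamma$ with $\gamma(0)=x$: as long as $\gamma(t)$ stays in $\hat B_\pi(0_p)$ one has $\|\gamma(t)\|=d_{\hat g}(0_p,\gamma(t))\leq r+t$, which is $<\pi$ for $t<\pi-r$, so $\gamma$ cannot exit the Euclidean ball before time $\pi-r$. For ingredient (b) I would exploit local homogeneity together with the extension-of-local-isometries tool announced earlier in the section. Local homogeneity supplies a local isometry germ sending $0_p$ to $x$; by real analyticity (Lemma~\ref{lem:analytic}) and simple connectedness of $\hat B_{\pi-r}(0_p)$, this germ continues uniquely to a local isometry $\phi\colon \hat B_{\pi-r}(0_p)\to \hat B_\pi(0_p)$, whose image lies in $\hat B_\pi(0_p)$ by applying (a) to the images of the radial rays from $0_p$ (which $\phi$ transports to $\hat g$-geodesics from $x$). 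Since $\exp^{\hat g}_{0_p}=\id$, one reads off the identity $\phi=\exp^{\hat g}_x\circ d\phi|_{0_p}$ on $B_{\pi-r}(0)\subset T_{0_p}$, reducing injectivity of $\exp^{\hat g}_x$ on $B_{\pi-r}(0)\subset T_x$ to injectivity of $\phi$.

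The main obstacle is verifying injectivity of $\phi$ on $\hat B_{\pi-r}(0_p)$. I would argue that a collision $\phi(z_1)=\phi(z_2)$ with $z_1\neq z_2$ would produce two distinct $\hat g$-geodesics of equal length from $x$ meeting at a common point; pulling this configuration back through a local inverse of $\phi$ near $0_p$ would produce two distinct geodesics from $0_p$ of length $<\pi$ that meet, contradicting $\exp^{\hat g}_{0_p}=\id$ on the full $\pi$-ball. Carrying out this uniqueness-of-analytic-continuation argument cleanly---ensuring that two branches of $\phi^{-1}$ emanating from $y=\phi(z_1)=\phi(z_2)$ cannot coexist inside $\hat B_{\pi-r}(0_p)$---is the delicate step of the proof.
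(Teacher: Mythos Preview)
Your construction of $\phi$ via analytic continuation is sound, and the identity $\phi=\exp^{\hat g}_x\circ d\phi|_{0_p}$ does reduce the problem to injectivity of $\phi$; but the injectivity argument you sketch does not close. If $\phi(z_1)=\phi(z_2)=y$, you propose to ``pull back through a local inverse of $\phi$ near $0_p$.'' That inverse, however, is only defined near $x=\phi(0_p)$, not near $y$. To transport the two geodesics $\sigma_i=\phi\circ\gamma_i$ back to geodesics emanating from $0_p$, you would have to analytically continue $\phi^{-1}$ from $x$ along each $\sigma_i$; but those two continuations terminate at $z_1$ and $z_2$ respectively, which are distinct by hypothesis. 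You therefore obtain two geodesics from $0_p$ ending at \emph{different} points, and there is no contradiction with $\exp^{\hat g}_{0_p}=\id$. Phrased in your own language: the obstruction is exactly the possible nontriviality of the monodromy of $\phi^{-1}$ around the loop $\sigma_1*\sigma_2^{-1}$ in the image of $\phi$, and nothing in your setup excludes this. (There is also a minor circularity hazard: the ``extension-of-local-isometries tool announced earlier'' is in fact Lemma~\ref{lem:extend}, which appears \emph{after} the present lemma and invokes it; you sidestep this by continuing analytically by hand, which is fine, but be aware that you cannot quote that lemma here.)

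The paper's proof avoids the injectivity-of-$\phi$ problem altogether with a different mechanism. It argues by contradiction: if $\eps:=\inj_{\hat g}(x)<\pi-r$, then since $|K_{\hat g}|\le 1$ Klingenberg's lemma produces a geodesic loop of length $2\eps$ at $x$. Because $\hat B_\pi(0_p)$ is simply connected and locally homogeneous, Nomizu's theorem supplies Killing fields spanning $T_x$, and constancy of the angle between a Killing field and a geodesic forces the loop to close up \emph{smoothly}, i.e.\ it is a closed geodesic. Continuity of the injectivity radius then gives closed geodesics through the nearby points $(1-t)x$ of length $2\eps+\delta(t)$; real analyticity, via the local finiteness of critical values of the length functional \cite{SS}, pins $\delta\equiv 0$. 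Iterating along the segment to the origin forces $\inj_{\hat g}(0_p)\le\eps<\pi$, contradicting $\inj_{\hat g}(0_p)=\pi$. It is this ``rigidity of short closed geodesics under analytic variation'' that replaces the injectivity step you were unable to complete.
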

\begin{proof}
Let $x \in \hat B_\pi(0_p)$ be given and suppose that $\eps:={\rm inj}_{\hat g}(x)<\pi -r$.
Using the  triangle inequality  we see that  the closure of $B^{\hat g}_\eps(x)$ is a subset of $\hat B_\pi(0_p)$.
Since $\vert K_{\hat g}\vert\leq 1$, by Klingenberg's Lemma
(cf.~\cite{Pet}, p.~182) there exists a geodesic loop  $c$
centered at 
$x$ of length $2\eps$ possibly not closing up smoothly.
Since the angle between a Killing field and a geodesic does not change,
the loop must close up smoothly. Here we have used that on a simply connected, locally 
homogeneous space $(M^n,g)$ for each point $p$ there exist Killing vector 
fields on $M^n$ spanning $T_p M^n$ (see Theorem 1 in \cite{No}).

Now the injectivity radius is a continuous function at $x$ (see Chapter VIII, Theorem 7.3 in \cite{KN2}). As a consequence, for 
small positive $t$ there exist closed geodesics $c_t$ centered at $(1-t)\cdot x$ of
length $2\eps+\delta(t)$, with $\lim_{t \to 0}\delta(t)=0$. We claim that in fact $\delta \equiv 0$.
To this end, we pick a sequence $(t_i)_{i \in \N}$ converging to zero, such that
the closed geodesics $c_{t_i}$ converge to a limit geodesic $\tilde c$ of length $2\eps$.
Since the metric $\hat g$ is real analytic, on compact sets the length functional
has only finitely many critical values by \cite{SS}. Consequently for large $i$ the 
length of $c_{t_i}$ must be $2\eps$.  
We can now conclude that also at the center point $0_p$ the injectivity radius
is less than or equal to $\eps < \pi$. This is a contradiction.
\end{proof}

In the next lemma we show that local isometries can be extended to balls of
uniform size.

\begin{lem}\label{lem:extend}
Let  $(\hat B_\pi(0_p),\hat g)$ be a geometric model, $x,y \in \hat B_\pi(0_p)$, and let 
\[
  \delta:=\delta(x,y):={\rm min}\{ i(\Vert x\Vert),i(\Vert y\Vert)\}\,.
\]
Then the open distance balls $B_\delta^{\hat g}(x)$ and $B_\delta^{\hat g}(y)$ are isometric.
\end{lem}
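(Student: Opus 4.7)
My plan is to construct an explicit isometry $\phi: B_\delta^{\hat g}(x) \to B_\delta^{\hat g}(y)$ by transferring an infinitesimal isometry at $x$ to $y$ via the exponential maps, and then verify that it really is an isometry using the real analyticity of $\hat g$ provided by Lemma \ref{lem:analytic}.

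First, local homogeneity of $(\hat B_\pi(0_p),\hat g)$ gives some $\eta>0$ and an isometry $\phi_0: B_\eta^{\hat g}(x) \to B_\eta^{\hat g}(y)$ with $\phi_0(x)=y$; set $L := d\phi_0|_x$, a linear isometry $T_x \to T_y$. By Lemma \ref{lem:injest}, ${\rm inj}_{\hat g}(x), {\rm inj}_{\hat g}(y) \geq \delta$, so the Riemannian exponential maps restrict to diffeomorphisms $\exp_x^{\hat g}: B_\delta(0_x) \to B_\delta^{\hat g}(x)$ and $\exp_y^{\hat g}: B_\delta(0_y) \to B_\delta^{\hat g}(y)$, and I define the candidate
\[
  \phi := \exp_y^{\hat g} \circ L \circ (\exp_x^{\hat g})^{-1}.
\]
Because any isometry intertwines exponential maps with its own differential, $\phi$ agrees with $\phi_0$ on the open set $B_{\min(\eta,\delta)}^{\hat g}(x)$; in particular $\phi^*\hat g = \hat g$ there. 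A triangle inequality shows $B_\delta^{\hat g}(x), B_\delta^{\hat g}(y) \subset \hat B_\pi(0_p)$, so the whole construction stays inside the geometric model.

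To upgrade this local identity to $\phi^*\hat g = \hat g$ on all of $B_\delta^{\hat g}(x)$, I would invoke Lemma \ref{lem:analytic}: the metric $\hat g$ is real analytic in the analytic atlas of $\hat B_\pi(0_p)$, hence so are $\exp_x^{\hat g}$, its inverse, and $\exp_y^{\hat g}$, and therefore so is $\phi$. The tensor $\phi^*\hat g - \hat g$ is then a real analytic symmetric $(0,2)$-tensor field on the connected open set $B_\delta^{\hat g}(x)$ (which is diffeomorphic to a Euclidean ball via $\exp_x^{\hat g}$). Since it vanishes on an open subset, it vanishes identically by analytic continuation, so $\phi$ is a smooth isometry onto its image, and that image is exactly $B_\delta^{\hat g}(y)$ since $L$ maps $B_\delta(0_x)$ bijectively onto $B_\delta(0_y)$.

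The main obstacle is precisely the last step. Constructing $\phi$ is essentially automatic from the local isometry and the injectivity radius estimate of Lemma \ref{lem:injest}, but showing that a map built from a single infinitesimal datum at $x$ globalizes to an isometry on a ball whose radius is dictated by comparison geometry, rather than by algebraic or symmetry data of the ambient space, requires the rigidity furnished by real analyticity. This is the key reason the argument must route through Lemma \ref{lem:analytic}.
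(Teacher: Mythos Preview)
Your proposal is correct and follows essentially the same route as the paper: both construct the candidate map $\exp_y \circ (d\phi_0)_x \circ \exp_x^{-1}$ using the injectivity radius estimate of Lemma \ref{lem:injest}, observe it agrees with the given local isometry near $x$, and then invoke real analyticity of $\hat g$ (Lemma \ref{lem:analytic}) and of the exponential map to conclude by analytic continuation that the pullback metric equals $\hat g$ on the full $\delta$-ball. The paper cites \cite[Proposition 10.5]{He} for analyticity of the exponential map, which is exactly the step you justify by ``$\hat g$ analytic $\Rightarrow$ $\exp$ analytic''.
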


\begin{proof}
Since $(\hat B_\pi(0_p),\hat g)$ is locally homogeneous, there exists $\eps>0$ and
an isometry $h:B_\eps^{\hat g}(x) \to B_\eps^{\hat g}(y)$. We need to show that we can
extend this isometry to an isometry $H:B_\delta^{\hat g}(x) \to B_\delta^{\hat g}(y)$.

Since by Lemma \ref{lem:analytic} the locally homogeneous space $(\hat B_\pi(0_p),\hat g)$
is real analytic, it follows 
as in the proof of Proposition 10.5 in \cite{He} that both
the exponential maps ${\exp}_x \! \!: \hat B_\delta(0_x)\to B_\delta^{\hat g}(x)$
and ${\exp}_y \! \!: \hat B_\delta(0_y)\to B_\delta^{\hat g}(y)$ are real analytic diffeomorphisms
by Lemma \ref{lem:injest}. We set 
$$
  H :  B_\delta^{\hat g}(x) \to B_\delta^{\hat g}(y) \,\,;\,\,\,
   z\mapsto \big(\exp_y \circ \left({\rm D} h \right)_{x} \circ \left(\exp_x \right)^{-1}\big)(z)\,.
$$
The map $H$ coincides with $h$ on $B_{\eps}^{\hat g}(x)$. Moreover $H$ is analytic.
As a consequence, the analytic tensors $\hat g\vert_{B_\delta^{\hat g}(x)}$ and 
${H}^*( \hat g\vert_{B_\delta^{\hat g}(y)})$ 
are equal on $B_{\eps}^{\hat g}(x)$. By analyticity they coincide on all of 
$B_\delta^{\hat g}(x)$, hence $H$ is the desired extension. Clearly, $H$ is a diffeomorphism.
\end{proof}

\begin{rem}\label{volumeremark}
Note that the previous two lemmata  imply that there exist positive
constants $v_n \leq V_n $ such that
\begin{eqnarray}
 v_n r^n  \leq  \vol(B^{\hat g}_r(x) ) \leq V_n r^n \label{secondvolest}
\end{eqnarray}
for all $x \in \hat B_{\pi}(0_p)$ and all $0<r<\pi -|x|$. 
\end{rem}

For a Riemannian metric $g$ we denote by $\Ric(g)$ its Ricci tensor.

\begin{thm}\label{thm:homconv}
Let $(\hat B_\pi(0_{p_i}),\hat g_i)_{i \in \N} $ be a sequence of locally homogeneous geometric models.
Suppose that $\Ric(\hat g_i) \to 0$ for $i\to \infty$. Then, there exists a subsequence converging in 
$C^{1,\alpha}$-topology to a smooth flat limit 
space $(X,\hat g)$.
\end{thm}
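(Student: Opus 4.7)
The plan is to combine Cheeger--Gromov compactness with the extension of local isometries from Lemma \ref{lem:extend} to produce a locally homogeneous Ricci-flat limit, and then to invoke the classical rigidity of such spaces. As a first step, I note that each $\hat g_i$ inherits the sectional curvature bound $|K_{\hat g_i}|\le 1$ from being the pullback of a globally homogeneous metric under the exponential map, Lemma \ref{lem:injest} yields the sharp injectivity radius estimate $\inj_{\hat g_i}(x)\ge \pi-|x|$, and Remark \ref{volumeremark} provides uniform volume bounds on small balls. These are precisely the hypotheses required for the pointed $C^{1,\alpha}$ Cheeger--Gromov compactness theorem, so after exhausting $\hat B_\pi(0)$ by compact subsets and diagonalizing, I extract a subsequence converging in the pointed $C^{1,\alpha}$-topology to a $C^{1,\alpha}$ Riemannian manifold $(X,\hat g)$.

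To upgrade this limit to a smooth Ricci-flat space I would pass to harmonic coordinates, in which the metric satisfies the quasi-linear elliptic system
\[
\Delta_g g_{jk}+Q(g,\partial g)=-2\Ric(g)_{jk},
\]
where $Q$ is quadratic in the first derivatives of $g$. Since $\Ric(\hat g_i)\to 0$ uniformly and the metrics converge in $C^{1,\alpha}$, the right-hand side tends to zero, and a standard elliptic bootstrap upgrades $\hat g$ to a $C^\infty$ (even real analytic) metric with $\Ric(\hat g)\equiv 0$.

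Next, local homogeneity of the limit is extracted from Lemma \ref{lem:extend}. For each pair of points $x,y$ in a fixed compactly contained region of $\hat B_\pi(0)$ that lemma produces isometries $H_i^{x,y}:B_\delta^{\hat g_i}(x)\to B_\delta^{\hat g_i}(y)$ on balls of a common size $\delta$ depending only on $|x|$ and $|y|$. Because the geometries are uniformly bounded, these distance-preserving maps are uniformly equicontinuous and their first derivatives are uniformly controlled, so by Arzelà--Ascoli and a diagonal argument over a countable dense set of pairs $(x,y)$ I may further refine the subsequence so that every $H_i^{x,y}$ converges to a map $H_\infty^{x,y}$ which is an isometry of $\hat g$ between the corresponding $\delta$-balls. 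Hence $(X,\hat g)$ is smooth and locally homogeneous, and being Ricci-flat it must be flat by Spiro's theorem \cite{Sp} (or by Alekseevskii--Kimel'feld \cite{AK} after noting that sufficiently small balls embed isometrically into a globally homogeneous space).

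The main obstacle is arranging for local homogeneity to descend to the limit through the \emph{same} subsequence used for $C^{1,\alpha}$-convergence; this is what forces the diagonal argument over a dense countable set of base pairs, and it is essential here that Lemma \ref{lem:extend} delivers isometries on balls whose radius depends only on distances to the base point, so that the equicontinuity constants are uniform in $i$.
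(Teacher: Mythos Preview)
Your proposal is correct and follows essentially the same route as the paper: Cheeger--Gromov compactness, elliptic upgrading to a smooth Ricci-flat limit, passage of the Lemma~\ref{lem:extend} isometries to the limit to obtain local homogeneity, and then Spiro's rigidity \cite{Sp}. The only minor divergence is in how the isometries are passed to the limit---the paper takes pointwise limits on a countable dense subset of a single ball and then invokes the smoothness of distance-preserving maps, rather than Arzel\`a--Ascoli with derivative control---and your worry about needing one subsequence valid for all base pairs is in fact unnecessary, since once the limit space is fixed one may take further subsequences for each individual pair.
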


\begin{proof}
Since for geometric models we have 
${\rm inj}_{\hat g_i}(0_{p_i})=\pi$ and $\vert K_{\hat g_i}\vert \leq 1$,
by the Cheeger-Gromov-compact\-ness theorem we may assume that $(\hat B_\pi(0),\hat g_i,0)_{i \in \N}$
converges to a $C^{1,\alpha}$-smooth manifold $(X,\hat g,x_0)$ in the pointed $C^{1,\alpha}$ topology.
Using  $\Ric(\hat g_i) \to 0$ as $i\to \infty$ this can be improved: $(X,\hat g)$ is smooth and satisfies
$\Ric(\hat g) = 0$ (see \cite{And}  and \cite{Pet}, Theorem 5.5).
The type of convergence  is described for example in 
Theorem \ref{thm:est}.

We claim now that this limit space $(X,\hat g)$ is locally homogeneous.
To this end, let $y \neq x_0$ be a point in $B^{\hat g}_{\pi-3\de}(x_0),$ and assume after pulling-back by the corresponding diffeomorphisms that convergence takes place in $B^{\hat g}_{\pi-\de}(x_0)$.
Then by Lemma \ref{lem:extend},
for all $i \in \N$ there exists a $\hat g_i$-isometry $f_i$ between $B_\delta^{\hat g_i}(x_0)$ and 
$B_\delta^{\hat g_i}(y)$. Choose a  countable
dense subset $S =\{s^k\}_{k \in \N}$ of $ B_\delta^{\hat g}(x_0)
\subseteq X$. 
The sequence $\left(f_i(s^k) \right)_{i\in \N}$
has a convergent subsequence with limit $f_\infty(s^k)\in
B_{\delta}^{\hat  g}(y)$. Using a Cantor diagonal procedure, one
can choose a subsequence $( f_{i_l})_{l \in \N}$
of $(f_{i})_{i \in \N}$, such that for any $k \in \N$
the sequence $(f_{i_l}(s^k))_{l\in \N}$ converges as $l$ goes to
infinity. 
This defines a one-Lipschitz map
$f_\infty:S \to X$, that is, $d_{\hat g}(f_{\infty}(s^j),f_{\infty}(s^k)  ) = d_{\hat g}(s^j, s^k)$.
Such a map can be extended to a distance preserving map from $B_\delta^{\hat g}(x_0)$
to $B_\delta^{\hat  g}(y)$. Clearly, this map is injective. Now, since $(X,\hat g)$ is smooth,
distance preserving maps are smooth as well (see for instance \cite[Theorem 11.1]{He}). Hence 
$f_\infty :B_\delta^{\hat  g}(x_0)\to B_\delta^{\hat  g}(y)$ is also surjective, and
this shows that these two balls are isometric. Consequently,
the limit space $(X, \hat  g)$ is locally homogeneous.

Finally, by \cite{Sp} locally homogeneous Ricci flat Riemannian manifolds are flat.
\end{proof}

\begin{proof}[Proof of the Gap Theorem]
As noticed in the introduction, it is sufficient to prove the equivalent estimate $\Vert \Rm(g) \Vert_g \leq C(n) \cdot  \Vert \Ric(g) \Vert_g$.
Let $V_n,v_n$ be the volume bounds mentioned in Remark
\ref{volumeremark},  $\eps(n)=
\eps(n,v_n,V_n) >0$ and $L(n) = L(n,v_n,V_n)$ be as in Theorem
\ref{thm:est}. W.l.o.g. $\ti \eps(n) := \frac{\eps(n)}{V_n3^n} \leq 1$.

The proof goes by contradiction. 
Suppose that there exists a sequence $(M^n_i,g_i)$ of $n$-dimensional homogeneous
spaces with $\Vert \Rm(g_i)\Vert= {\ti \eps(n)  }^{2/n}\leq 1 $ and $\Vert \Ric(g_i)\Vert \to 0$ for $i\to \infty$.
Notice that $\Vert \Rm(g_i)\Vert \leq 1$ implies $\vert K_{g_i}\vert \leq 1$. Hence,
by the above discussion every such space has a geometric model $(\hat B_\pi(0_{p_i}),\hat g_i)$.
By Theorem \ref{thm:homconv} we may assume that the sequence 
$(\hat B_\pi(0_{p_i}),\hat g_i, 0_{p_i})_{i \in \N}$
converges to a flat limit space $(X,\hat g,x_0)$.

On the other hand, this contradicts the estimates in Theorem \ref{thm:est}
as we will show now. The balls $B_3^{\hat g_i}(0)$
are compactly contained in $\hat B_\pi(0_{p_i})$ and satisfy the volume estimates of Remark \ref{volumeremark}. 
Clearly, $\Vert \Ric(\hat g_i)\Vert \to 0$
for $i \to \infty$. Moreover, by our normalization we have that
\begin{eqnarray}
             \int_{B_{3}^{\hat g_i}(0)} \Vert \Rm(\hat g_i)\Vert^{n/2} d\mu_{\hat g_i}
             = {\ti \eps(n)} \cdot {\rm vol}(B_3^{\hat g_i}(0)) \in
             \big[\eps(n)\tfrac{v_n}{V_n},\eps(n)\big]  \label{thecontra} 
\end{eqnarray}
in view of the volume estimates. Thus, we can apply Theorem
\ref{thm:est} and conclude that the convergence to $(X,\hat g, x_0)$
is in fact in the $W^{2,p}$-topology, for any $p>n/2$. Using this, together with the volume estimates \eqref{secondvolest} and the fact that the limit is flat we obtain
\begin{align*}
 	v_n^{1/p} \cdot \tilde \varepsilon(n)^{n/2} &= v_n^{1/p} \cdot \Vert \Rm(\hat g_i)\Vert   \\
 		 & \leq {\rm vol}(B_1^{\hat g_i}(0))^{1/p} \cdot \Vert \Rm(\hat g_i)\Vert   \\
 		 & = \Big( \int_{B_{1}^{\hat g_i}(0)}  \Vert \Rm(\hat g_i)\Vert^{p} \Big)^{1/p}  \underset{i\to\infty}\longrightarrow 0,
\end{align*}
a contradiction.
\end{proof}



\section{A weak convergence result}\label{sec:conv}

In this section we will state a convergence result and curvature estimates for
Riemannian manifolds $(D^n,g)$ without boundary (possibly incomplete)
satisfying certain integral curvature 
and volume bounds. These results were applied to  geometric models of homogeneous
spaces in the proof of the Gap Theorem. Notice though that in this section
no symmetry assumptions are made on $(D^n,g)$ whatsoever.

We start by defining the $W^{1,p}$ harmonic radius of $(D^n, g)$. 
Given a chart 
$$
  \psi:V \to \psi(V)=Y\subset \R^n\,,
$$
$V \subset D^n$,  we denote by 
$$
  ( g_{jk}^\psi)_{1\leq j,k \leq n}:Y\to \R
$$
the coordinate functions of $g$ in the chart $\psi$. Then we set
 \begin{eqnarray}
\norm D  g\norm_{L^{p}}(\psi)
  := 
   \Big( \int_{\psi(V)} \sum_{j,k,l=1}^n |\partial_l  g_{jk}^\psi|^{p}  dy  \Big)^{\frac{1}{p}},
\end{eqnarray} 
where $dy$ refers to Lebesgue measure on $\R^n$ and $\partial_l g_{jk}^\psi $ 
refers to the standard Euclidean partial derivative in the $l$-th direction of the function 
$g_{jk}^\psi$. 

Clearly the above norm depends on the choice of the chart. For instance if $\psi(V)$ is
the Euclidean standard ball $B_1^{\rm std}(0)$ of radius one and
if $\psi_r := d_r \circ \psi$, $d_r(y)=r\cdot y$ is the 
dilation by factor $r>0$, then
$$
  \norm D g\norm_{L^{p}}(\psi_r)=r^{\frac{n}{p}-3}\cdot \norm D g\norm_{L^{p}}(\psi)\,.
$$
We assume now that distance ball $B^g_1(y)$ is compactly embedded in $D^n$.
Furthermore, we assume that there exists constants $0<v<V$ such that
for all $B_r^g(x)  \subseteq B_{1}^g(y)$ and all $0\leq r \leq 1$ we have
\begin{eqnarray}
          v r^n \leq \vol(B_r^g(x)) \leq V r^n\,.\label{volest}
\end{eqnarray}

\begin{defin}[Harmonic radius]\label{harmonic}
Let $(D^n,g)$ be as above and let $0< p<\infty$. Then
the $p$-harmonic radius $r_{{\rm har},p}^g(x)$ 
at a point $x \in B_1^{g}(y)$ is the supremum of all $r > 0$ 
with the following property:
There exists a $C^\infty$-smooth chart 
$$
  \psi_r=(\psi_r^1,...,\psi_r^n): V \to  B_r ^{\rm std}(0)
 $$ 
around $x$ with $\psi_r(x)=0$, $V\subseteq B_{1}^g(y) $ with the following properties:
\begin{itemize}
\item[(i)]  $ \tfrac{1}{2} \de_{jk} < g_{jk}^{\psi_r} < 2
  \de_{jk}$, and $g_{ij}(0) = \de_{ij}$.
\item[(ii)] $r^{1-\frac{n}{p}}\cdot \norm D  g \norm_{L^{p}}(\psi_r) <2$.
\item[(iii)] The map $\psi_r$ is harmonic, that is $\lap_{g} \psi^m_r = 0$  
   for all $m \in \{1,\ldots,n\} $.
\end{itemize}
\end{defin}

Note, that nowhere in the definition do we require that
$V=\psi_r^{-1}(B_r(0))$ be a geodesic ball. All three conditions are
invariant  under
the simultaneous scaling $(g,r)\mapsto (\lambda^2\cdot g,\lambda \cdot r)$
for some $\lambda >0$, where then the ball $B_1^g(y)$ has
to be replaced by $B_{1\lambda}^{\lambda^2\cdot g}(y)$ in the above definition.
As a consequence, the harmonic radius scales as a radius.

The  proof given here follows essentially the proof given in Appendix B of \cite{Sim}, which, as explained there, essentially follows the
proof of Main Lemma 2.2 of \cite{And} (see Remark 2.1 there), using some notions  
coming from \cite{AnCh}.

\begin{thm}\label{thm:har}
Let  $0<v \leq V $ and  $ p \in (n/2, \infty)$ be fixed constants.
Then there exist  \mbox{$\eps=\eps(v,V,n,p)>0$} and $L= L(v,V,n,p)>0$ 
such that the following holds.
Let $(D^n,g)$ be a smooth Riemannian
manifold without boundary, $y \in D^n$, such that $B_1^g(y)$  is compactly
contained in $D^n$. Assume that the volume estimates of {\rm (\ref{volest})}
are satisfied, and
\begin{equation}\label{littlericci}
  \int_{B_1^g(y)} \Vert\Ric(g)\Vert^{p} d\mu_{g} \leq 1,
\end{equation}
and \begin{equation}\label{RLpbound}
  \int_{ B_{1}^g(y)  } \Vert \Rm(g) \Vert^{n/2} d\mu_{g} \leq \epsilon\,.
\end{equation}
Then for all $x \in B_s^g(y)$, $s <1$ we have
\begin{eqnarray}
r_{{\rm har},2p}^g(x) &\geq & L (1-s).
\end{eqnarray}
\end{thm}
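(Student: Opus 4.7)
The plan is to prove the harmonic radius lower bound by contradiction, following the classical Anderson scheme: rescale at a near-infimizer of the normalized harmonic radius, extract a pointed weak subsequential limit, and show that the limit is a smooth flat metric, which contradicts the scaling normalization.

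\emph{Step 1 (Point selection and rescaling).} Suppose the conclusion fails, so for each $k\in\N$ there is a metric $g_k$ satisfying the hypotheses with $\epsilon = 1/k$, together with $s_k\in(0,1)$ and $x_k^\ast\in B_{s_k}^{g_k}(y_k)$ with $r_{{\rm har},2p}^{g_k}(x_k^\ast)/(1-s_k)\leq 1/k$. A standard point-picking procedure applied to the scale-invariant ratio $r_{{\rm har},2p}^{g_k}(\cdot)/\rho_k(\cdot)$, where $\rho_k(x):=1-d_{g_k}(x,y_k)$, produces a point $x_k\in B_{s_k}^{g_k}(y_k)$ that (up to a factor $2$) minimizes this ratio. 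Rescale by $\lambda_k := 1/r_{{\rm har},2p}^{g_k}(x_k)\to\infty$ and set $\tilde g_k:=\lambda_k^2 g_k$. By scale invariance of the harmonic radius, in the new metric $r_{{\rm har},2p}^{\tilde g_k}(x_k) = 1$, and for every fixed $R>0$ the harmonic radius stays uniformly bounded below on $B_R^{\tilde g_k}(x_k)$ for $k$ large.

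\emph{Step 2 (Scaling behavior and limit extraction).} The volume bounds (\ref{volest}) and the scale-invariant integral $\int\|\Rm\|^{n/2}d\mu$ are preserved under $g\mapsto\lambda^2 g$, so $\int\|\Rm(\tilde g_k)\|^{n/2}d\mu_{\tilde g_k}\leq 1/k\to 0$. The integral $\int\|\Ric\|^p d\mu$ scales as $\lambda^{n-2p}$; since $p>n/2$ one also gets $\int\|\Ric(\tilde g_k)\|^p d\mu_{\tilde g_k}\to 0$. Uniform lower bounds on the $2p$-harmonic radius give uniform $W^{1,2p}$ control on the rescaled metric components in harmonic charts, and Morrey embedding ($2p>n$) upgrades these to uniform $C^{0,\alpha}$ bounds. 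Standard arguments on overlapping harmonic charts then produce, after passing to a subsequence, a pointed limit $(X_\infty,g_\infty,x_\infty)$ to which $(D_k,\tilde g_k,x_k)$ converges on arbitrarily large geodesic balls, weakly in $W^{1,2p}$ and strongly in $C^{0,\alpha}$.

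\emph{Step 3 (Regularity of the limit and contradiction).} In harmonic coordinates the Ricci identity takes the quasilinear elliptic form $\Delta_g g_{jk} = -2R_{jk} + Q(g,\partial g)$, with $Q$ quadratic in $\partial g$ and smooth in $g$. The uniform $L^{2p}$-bound on $\partial g$ and the vanishing of $\Ric$ in $L^p$ place the right-hand side in $L^p$ with norm tending to zero, so Calder\'on--Zygmund estimates upgrade the convergence to $W^{2,p}_{\rm loc}$, equivalently to $C^{1,\alpha}_{\rm loc}$. In the limit $\Ric(g_\infty)\equiv 0$, and lower semicontinuity of the $L^{n/2}$-norm of $\Rm$ under $W^{2,p}$-convergence forces $\Rm(g_\infty)\equiv 0$ a.e. Elliptic bootstrap applied to the harmonic-coordinate Ricci equation then shows $g_\infty$ is $C^\infty$-smooth, hence a smooth flat metric. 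For any $R>1$ one can therefore exhibit a chart on $B_R^{g_\infty}(x_\infty)$ satisfying Definition \ref{harmonic}(i)--(iii); transplanting this chart to $(D_k,\tilde g_k)$ by the $C^{1,\alpha}$-convergence yields $r_{{\rm har},2p}^{\tilde g_k}(x_k) > R$ for $k$ large, contradicting the normalization $r_{{\rm har},2p}^{\tilde g_k}(x_k) = 1$.

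The main technical obstacle is the regularity upgrade in Step 3: one must pass from the weak $W^{1,2p}$-convergence delivered by the harmonic-radius bounds to genuine $C^{1,\alpha}$-convergence, so that the harmonic radius of the limit can actually be read off. This requires exploiting the Ricci equation in harmonic coordinates uniformly in the sequence via Calder\'on--Zygmund / Schauder type estimates, and the quadratic error term $Q(g,\partial g)$ must be controlled using precisely the hypothesis $p>n/2$ so that it lies in the same $L^p$ space as $\Ric$. A secondary subtlety is to arrange the point-picking of Step 1 so that harmonic-radius control persists on arbitrarily large rescaled balls, which is what makes a pointed limit on the whole of $\R^n$ (rather than on a tiny ball) accessible.
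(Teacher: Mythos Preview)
Your proposal is correct and follows essentially the same contradiction/rescaling/harmonic-coordinate scheme as the paper, including the same use of the Ricci equation in harmonic coordinates to bootstrap to $W^{2,p}$ and the same mechanism for the final contradiction via Anderson's argument. One small slip: the assertion that $W^{2,p}_{\rm loc}$ convergence is ``equivalently $C^{1,\alpha}_{\rm loc}$'' only holds when $p>n$; for $n/2<p<n$ the paper works instead with $W^{2,p}\cap W^{1,2p}$ convergence (obtained by a direct Cauchy-sequence argument), which is what is actually needed both for reading off $\Rm(g_\infty)=0$ and for the transplantation step, so your outline survives with this correction.
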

\begin{proof}

We are going to prove that 
 $r_{{\rm har},2p}^g(x) \geq  L \dist_g(x,\partial B_1^g(y))$ for all $x \in B_s^g(y)$.
Assume that the result is false.
We start with the more difficult case $p \in (n/2,n)$.
 Then there exists a sequence $(D^n_i,g_i,y_i)_{i \in \N}$
of pointed Riemannian manifolds without boundary,
 such that $B_1^{g_i}(y_i)$ is compactly contained in $(D^n_i,g_i)$
with the following properties: We have 
\begin{eqnarray}
  \int_{B_1^{g_i}(y_i)} \Vert \Rm(g_i) \Vert ^{n/2} d\mu_{g_i}\leq \tfrac{1}{i}\label{Rconz}
\end{eqnarray}  
and there exist $\tilde x_i \in B_1^{g_i}(y_i)$ such that
$$
  h_i(\tilde x_i)=
  \tfrac{r_{{\rm har},2p}^{g_i}(\tilde x_i)}{ \dist_{g_i}(\tilde x_i, \partial(B_{1}^{g_i}(y_i))) } \to 0 \quad \textrm{as}\quad i \to \infty\,.
$$  
Since $r_{{\rm har},2p}^{g_i}$ is lower semi-continuous  on
$\overline{B_1^{g_i}(y_i)}$ ($2p>n$),
there exist points
$x_i \in B_1^{g_i}(y_i)$ with $h_i(x_i) \leq h_i(x)$ for all $x\in B_1^{g_i}(y_i)$.
Clearly $\lim_{i \to \infty} h_i(x_i)=0$.
This in turn implies $\lim_{i\to \infty}
r_{{\rm har},2p}^{g_i}(x_i)\to 0$.  We deduce that
$$
  \mu_i:=\tfrac{1}{ r_{{\rm har},2p }^{g_i}(x_i)}\to \infty\quad \textrm{as}\quad i \to \infty\,.
$$
Next we consider the rescaled metrics $\tilde g_i:=\mu_i^2 \cdot g_i$.
Notice that the functions $h_i$, defined above, are invariant under such a scaling,
if we replace the ball of radius one by the balls $B_{\mu_i}^{\tilde g_i}(y_i)$ 
in the definition of harmonic radius.

We also have $r_{{\rm har},2p}^{\tilde g_i}(x_i)=1$, 
\begin{eqnarray}
\int_{B_{\mu_i}^{\tilde g_i}(y_i) }
\Vert \Ric(\tilde g_i)\Vert^{p} d\mu_{\tilde g_i}\to 
  0 \quad \textrm{as}\quad i \to \infty\,, \label{intestimatesRic}
\end{eqnarray}
and
\begin{eqnarray}
 \int_{B_{\mu_i}^{\tilde g_i}(y_i) }
  \Vert \Rm(\tilde g_i)\Vert^{n/2} d\mu_{\tilde g_i}\to 
  0 \quad \textrm{as}\quad i \to \infty\,, \label{intestimatesR}
\end{eqnarray}
since $p>\tfrac{n}{2}$ and the inequality (\ref{Rconz}) holds; notice that the integral
in (\ref{intestimatesR}) is invariant under scaling of the metric. 
Hence by the above mentioned scale invariance of the functions $h_i$ 
and the choice of points $x_i$ we have
for all $x \in B_{\mu_i}^{\tilde g_i}(y_i)$ 
$$
  \tfrac{r_{{\rm har},2p}^{\tilde g_i}(x)}{ \dist_{\tilde g_i}(x, \partial(B_{\mu_i}^{\tilde g_i}(y_i))) }
  \geq 
    \tfrac{r_{{\rm har},2p}^{\tilde g_i}(x_i)}{ \dist_{\tilde g_i}(x_i, 
    \partial(B_{\mu_i}^{\tilde g_i}(y_i))     }
   =  \tfrac{1}{ \dist_{\tilde g_i}(x_i,\partial(B_{\mu_i}^{\tilde g_i}(y_i))     }\to 
   0 \quad \textrm{as}\quad i \to \infty\,.
$$
Next, as in \cite{Sim} a simple triangle inequality estimate shows that for any $\rho>0$
and any  $x \in B_{\rho}^{\tilde g_i}(x_i)$ for $i \geq N(\rho)$ large enough we have
$$
   r_{{\rm har},2p}^{\tilde g_i}(x) \geq \tfrac{1}{2}\,.
$$
For ease of reading,  we remove the tildes from $\tilde g_i$
and write again $g_i$.

We set 
$   r_0 :=\tfrac{1}{100}$. Using the volume estimates and (i) we find harmonic coordinate charts
$\psi_i^s:U_i^s \to B_{r_0}^{\rm std}(0)$ with $\psi_i^s(x_i^s)=0$, $s=1,...,N=N(v,V,\rho,n)$,
such that  the  sets $(U_i^s)_{s=1}^N$ cover $B_\rho^{g_i}(x_i)$, and  their
intersection number is bounded from above by $Z(v,V,n)$. 
We write
$$
 \varphi_i^s:B_{r_0}^{\rm std}(0) \to U_i^s\,;\,\,y \mapsto (\psi_i^s)^{-1}(y)
$$
and call these maps charts also. We proceed as in \cite{Sim}
(cf.~\cite{Pet}): There exists a limit space $(X,d_X,x_\infty)$
of the sequence $(D^n_i,d(g_i),x_i)$ in pointed Gromov-Hausdorff-topology by 
Theorem 7.4.15 in \cite{BBI} in view of the volume \mbox{estimates.}
Arguing exactly as in \cite{Pet} after the proof of Fact 4 and at 
the beginning of Fact 5, we see first that 
$X$ is a $C^0$ manifold, with coordinate charts
\mbox{$\varphi_r:B_{r_0}^{\rm std}(0) \to V_r $}, and their construction implies the
following:
if $\varphi_t(B_{2\ep}^{\rm std}(v)) \subseteq V_r \cap V_t$, then the maps
$(\alpha_{i})_r^t:= (\varphi_i^r)^{-1}\of \varphi_i^t:B_{\ep}^{\rm std}(v) \to \R^n$ subconverge with respect to the $C^0$ norm to the maps $\alpha_r^t:=   (\varphi^r)^{-1}\of \varphi^t: B_{\ep}^{\rm std}(v) \to \R^n$ as $i \to \infty$.

Next we show that the limit space $X$ is a $C^{2,\beta}$-manifold.
We have to prove that the transition functions
\[
   (\alpha_i)_{\tilde s}^s:=(\varphi_i^{\tilde s})^{-1}\circ \varphi_i^{s}
\]
have a convergent subsequence in $C^{2,\beta}$. We sketch the
argument.

We can assume, choosing $i$ large enough, that $ (\alpha_i)_{\tilde
  s}^{ s}$ is defined on a small ball $ B_{2\delta}^{\rm std}(z) \subseteq B_{r_0}^{\rm std}(0)  $,
independent of $i$. The indices $s$ and $\ti s$ are fixed for the moment.
 By assumption (i) and (ii)
we have $W^{1,2p}( B_{r_0}^{\rm std}(0) )$-bounds for
$g_{jk}^{\psi_i^s}$, with $2p>n$.
By Morrey's Embedding theorem (see Theorem 7.17 in \cite{GiTr}) 
we obtain $C^\alpha(B_{r_0}^{\rm std}(0))$-bounds for $g_{jk}^{\psi_i^s}$ 
for some $0< \alpha < 1$  and by the Arzela-Ascoli-Theorem we
obtain, after taking a  subsequence, $g^{\psi_i^s}  \to h^s$  in
$C^{\al}(B_{r_0}^{\rm std}(0))$ as $i \to \infty$ for some Riemannian metric $h^s \in
C^{\al}(B_{r_0}^{\rm std}(0))$: we use the fact that $ \frac{1}{2}\de
\leq g^{\psi_i^s} \leq 2 \de$ freely, sometimes without explicit mention,
where $\de$ denotes the standard metric on $\R^n$.
It is well  known,
see remark B.2 in \cite{Sim}, that the transition functions for harmonic
coordinates satisfy 
$$
   \sum_{j,k=1}^n
    g^{jk}_{\psi_i^s}\cdot  \partial _j\partial _k
    ((\alpha_i)_{\tilde s}^s)^m =0
$$
for all $m = 1,...,n$ on  $B_{2\delta}^{\rm std}(z)$. 

Now by Schauder theory for the above elliptic differential equation one obtains
bounds for  $((\alpha_i)_{\tilde s}^s)^m$ in
$C^{2,\alpha}(B_{\delta}^{\rm std}(z)) $. 
By the Arzela-Ascoli-Theorem we obtain
a subsequence, which converges to the limit transition function
$(\alpha)_{\tilde s}^s = (\varphi^{\tilde s})^{-1}\of \varphi^s$ in
$C^{2,\beta}( B_{\delta}^{\rm std}(z) )$-topology for $\beta <
\alpha$.  Hence $X$  is a $C^{2,\beta}$ smooth manifold.
Also, pushing the metric $h^s$ back to $X$ using
$\varphi^s$ , we obtain a well defined $C^{\al}$
metric $h$ on $X$.

The bulk of the rest of the argument is devoted to showing that the limit manifold is
a smooth flat Riemannian manifold $(X,h)$ having  Euclidean volume
growth and hence is isometric to standard Euclidean space. This is used to obtain a
contradiction to the fact that the harmonic radii of the approximating
sequence is bounded from above.

We  sketch the proof given in \cite{Sim}.
We fix an $s$ and the maps $\varphi^s:B_{r_0}^{\rm std}(0) \to V_s$, 
$\varphi_i^s:B_{r_0}^{\rm std}(0) \to V_s$ associated to this $s$.
By (i) and (ii) in the definition of
harmonic radius we have $W^{1,2p}(B_{r_0}^{\rm std}(0))$-bounds for the metric
coefficients $g_i^s=g^{\psi^s_i}$ where $2p \in (n,2n)$. 
Of course this gives bounds in $W^{1,q}(B_{r_0}^{\rm std}(0))$ for all $q<n$,
since \mbox{$2p >n$}.
Also,  $g_i^s$ subconverges to $h^s$  in $L^{q}(B_{r_0}^{\rm std}(0)) $ for all $q \in (0,\infty)$
as $i\to \infty$,
since $g_i^s$ subconverges to $h^s$ in $C^{\al}(B_{r_0}^{\rm std}(0)).$
It is well known, that   in harmonic coordinates one has
\begin{eqnarray}
 g^{ab}\partial_a\partial_b g_{jk}= (g^{-1}*g^{-1}*D g *D g)_{jk}
   -2\Ric(g)_{jk}\, ,\label{richar}
\end{eqnarray}
see the reference in \cite{Sim}.
Remembering that  $s$ is fixed, we now use the notation $g_i = g_i^s$.
We know by (\ref{intestimatesRic}) that $\Ric(g_i)$ converges to zero
in $L^p(B_{r_0}^{\rm std}(0))$ as $i \to \infty$,
and by  H\"older's inequality the other terms are bounded in
$L^p(B_{r_0}^{\rm std}(0))$ in view of the fact that  $\partial g$
is bounded in $L^{2p}$, and  $g,g^{-1}$ are
bounded by $2 \de$. 
Now by the $L^p$-theory we obtain bounds for $g_i$ in
$W^{2,p}(K)$ (\cite{GiTr}, Theorem 9.11) for any compact $K \subseteq
 B_{r_0}^{\rm std}(0)$,
 and hence by the Rellich-Kondrachov-Embedding theorem, $g_i$ converges to 
$h$ strongly in $W^{1,q}(K)$,  for any  $q < p^*$ where 
$p^* = \frac{np}{n-p} >2p$. Note that $q =2p$ is a
valid choice here.

Next we are going to show that $(g_i)_{i \in \N}$ is
a Cauchy-sequence in $W^{2,p}(K)$, which gives us
$g_i \to h^s$ in $W^{2,p}(K)$.
For simplicity we write $g=g_i$ and $\tilde g=g_{\tilde i},$ $h = h^s$ in the next paragraph:
$s$ is still fixed. 
We set
\begin{eqnarray}
  L_{jk}&:=&
       (h^{ab}-g^{ab})\partial_a\partial_b g_{jk}
       -(h^{ab}-\tilde g^{ab})\partial_a\partial_b \tilde g_{jk} -2\Ric(g)_{jk}+2\Ric(\tilde g)_{jk}\cr
    &&   
     +(g^{-1}*g^{-1}*D g *D g)_{jk}
       - (\tilde g^{-1}*\tilde g^{-1}*D \tilde g *D \tilde g)_{jk}
       \,. \label{theleq}
\end{eqnarray}
Then we have
\begin{eqnarray}
   L_{jk}= h^{ab}\partial_a\partial_b (g-\tilde g)_{jk}\,.\label{opL}
\end{eqnarray}   
We are going to show that $L_{jk}$ becomes as small in $L^{p}(K)$
as we like for $i,\tilde i$ large enough.
The first term on the right of \eqref{theleq} may be estimated as follows
\begin{eqnarray*}
 \int_K \vert (h^{ab}-g^{ab})\partial_a\partial_b g_{jk} \vert^{p}
 &\leq &C(n)\cdot \sup_K| h-g| \int_K | \partial^2 g|^p.
\end{eqnarray*}
The integrals are taken with respect to the standard Lebesgue measure.
Since $g_i$ converges to $h$ in $C^{\alpha}(K)$ and 
 $g=g_i$ is bounded  in $W^{2,p}(K)$, as we showed above, we see that
 this term  is as small as we like in $L^{p}(K)$, as long as $i$ is large enough.
The second term can be estimated in a similar fashion.
The third and the fourth term converge by (\ref{intestimatesRic}) in
$L^p(K)$ to zero. The last two terms are dealt with as follows: $Dg_i \to
Dh$ and $Dg_{\ti i} \to Dh$ as $i, \ti i \to \infty,$ in $L^{2p}$, as
we explained above, and hence
$(g^{-1}*g^{-1}*D g *D g)_{jk}$ and $(\tilde g^{-1}*\tilde g^{-1}*D
\tilde g *D \tilde g)_{jk} $
converge to $(h^{-1}*h^{-1}*D h *D h)_{jk}$ in $L^{p}(K)$, which implies
that the sum of the last two terms of \eqref{theleq} converges to zero in
$L^p(K)$.

We deduce from \eqref{opL} and \eqref{theleq} that
$$
h^{ab}\partial_a\partial_b (g_i-g_{\tilde i})_{jk} = f(i,\tilde i)_{jk}
$$
with 
$$
   \int_K \vert f(i,\tilde i)\vert^{p} dy \leq \eps(i,\tilde i)
$$
and $\eps(i,\tilde i) \leq \eps$ for any $\eps>0$, if $i,\tilde i\geq N(\eps)$ are
large enough. Using again $L^p$-theory for elliptic operators we deduce 
by Theorem 9.11 in \cite{GiTr} that
$$
  \Vert g_i -g_{\tilde i}\Vert_{W^{2,p}(\tilde K)}
   \leq C_K \cdot \big( \Vert f(i,\tilde i)\Vert_{L^{p}(K)} + 
    \Vert g_i-g_{\tilde i} \Vert_{L^{p}(K)} \big)\,.
$$
on any smooth compact subset $ \ti K \subsub  K \subseteq B_{r_0}^{\rm std}(0)$.

This clearly implies that $(g_i)_{i \in \N}$
is a Cauchy sequence in $W^{2,p}(K)$ for compact subsets $K
\subseteq B_{r_0}^{\rm std}(0)$ , and consequently $g_i \to h$
for $i \to \infty$ in $W^{2,p}(K)\cap W^{1,q}(K),$ for any $q \in
[2p,p^*)$, in particular we can choose $q =2p$ here.

We come now back to the identity (\ref{richar}):  we can take the limit in $L^{p}(K)$ 
and deduce
$$
  h^{ab}\partial_a\partial_b (h)_{jk}=- (h^{-1}* h^{-1}* Dh *Dh)_{jk},
$$
where the right hand  of this equation is in $L^r(K)$ with $r = \frac{n}{2} +
  \sigma$ for some $\frac{n}{2}>\sigma >0$, and hence $h \in W^{2,r}(K)$ by the
$L^r$ theory.
 The Sobolev Embedding
Theorem tells us that $h \in W^{1,\frac{rn}{n-r}}(K),$ and we note
that 
$\frac{rn}{n-r} = \frac{(n/2 + \sigma)n}{(n/2 - \sigma)} = n + 2\sigma \cdot
(\frac{n}{n/2 - \sigma})\geq n +4\sigma$, and hence  the right hand side
of the above equation is bounded in $L^{\frac{n}{2} +  2\sigma}$.

Iterating this argument, we get $h \in W^{2,r}$ for all $r\in
[1,\infty)$: we first choose $\sigma>0$ very small, and $N \in \N$ large so that
$\frac{n}{2} + (N-1) \sigma $ is as close as we like, but less than $n$.
Then the Sobolev Embedding Theorem in the $N$'th iteration gives us that the right
hand side is in $L^r(K)$. 
Now using the $L^2$ and the $L^r$ theory, we conclude, as in
\cite{Sim}, that $h$ is $C^{\infty}(K)$.

At this stage we know that the limit manifold $X$ is a
$C^{2,\beta}$-manifold, but that 
the metric $h$ is $C^\infty$-smooth in
our constructed harmonic coordinates. 
Using a similar but simpler  argument to the one used above to show that $h$ is
$C^{\infty}$, we see that the transition functions on $X$, and hence $X$, is
$C^{\infty}$: see the argument given in \cite{Sim} for example.

Next, we show that the limit space $(X,h)$ is flat. 
Let $s$ be fixed and the map
$\varphi_i^s: B_{r_0}^{\rm std}(0) \to U_i^s \subseteq X_i $ 
and  $\varphi^s:  B_{r_0}^{\rm std}(0) \to U^s$ in $X$ be as above. Since the
metrics $g_i = g_i^s$ converge to $h^s$ in
$W^{2,p}(K)\cap W^{1,2p}(K)\cap C^{0,\al}(K)$ in these coordinates,
for fixed $s\in \{1,...,N\}$ we have writing $h=h_s$, abusing notation slightly,
\begin{eqnarray*}
  \lefteqn{ \int_{K} \Vert \Rm(h)\Vert^{p} \,d\mu_{h} =}&& \\
    &=& 
     \int_{K} \Vert h^{-1} * h^{-1}*D^2 h 
      + h^{-1}* h^{-1}*D h*Dh \Vert^{p}\, d\mu_{h}\\
    &=&    
     \lim_{i\to \infty}
    \int_{K}
    \Vert (g_i^s)^{-1} * (g_i^s)^{-1}*D^2 g_i^s 
      + (g_i^s)^{-1}* (g_i^s)^{-1}*D g_i^s *Dg_i^s \Vert^{p}\, d\mu_{g_i^s}\\
    &\leq&   
     \lim_{i\to \infty} \int_{B_{r_0}^{\rm std}(0)}  \Vert \Rm(g_i)\Vert^{p} \,d\mu_{g_i}
     \stackrel{ (\ref{intestimatesR})}{=} 0\,.    
\end{eqnarray*}
This shows that the limit space $(X,h)$ is flat. $(X,h)$ has Euclidean
volume growth since
the $(B_\rho^{g_i}(x_i),g_i)$'s also do. 
This implies that $(X,h)$ is isometric to $(\R^n,\de)$.

As in the proof of the Main Lemma 2.2
in \cite{And} one obtains now a contradiction:
see \cite{Sim} for more details.

The case $p \geq n$ is similar but easier. Arguing as above, we get
(locally) $g_i
\in W^{2,p}(K)$ and hence 
$\partial g \to \partial h$ in $L^s(K)$ for all $s \in
[1,\infty)$, in view of the Rellich-Kondrachov embedding Theorem, and
hence, arguing as above, $g_i \to
h$ in $W^{2,p}(K) \cap W^{1,p}(K) $ for all smooth compact sets $K
\subseteq B_{r_0}^{\rm std}(0)$.
The rest of the argument is the same.
\end{proof}

\begin{thm}\label{thm:est}
Let  $0<v\leq V $ and $p\in(\frac{n}{2},\infty)$ be fixed and $\eps=\eps(v,V,n,p)>0$
and $L= L(v,V,n,p)$ be the constants from Theorem {\rm \ref{thm:har}} above.
Let $(D^n_i,g_i, y_i)_{i \in \N}$ be a sequence of pointed smooth Riemannian manifolds without
boundary such that $B_1^{g_i}(y_i) $  is compactly
contained in $D^n_i$ for all $i \in \N$, $y_i \in D^n_i$.
Assume that 
\begin{eqnarray*}
          v r^n \leq \vol(B_r^{g_i}(x)) \leq V r^n\,.
\end{eqnarray*}
for all $r \leq 1 $, for all $B_r^{g_i}(x)  \subseteq B_1^{g_i}(y_i)$, and 
\begin{eqnarray*}
  \int_{ B_{1}^{g_i}(y_i)  } \Vert \Rm(g_i) \Vert^{n/2} d\mu_{g_i} \leq \eps, 
\end{eqnarray*}
and
$$
  \int_{B_1^{g_i}(y_i)} \Vert\Ric(g_i)\Vert^{p} d\mu_{g_i} \to 0  
   \quad \textrm{as}\quad i \to \infty
$$ 
for some $p\in(\frac{n}{2},\infty)$.
Then for all $z \in B_{s}^{g_i}(y_i)$, $s<1$, the $2p$-harmonic radius is
bigger than $L(1-s)$.
Furthermore, we find a smooth Ricci flat limit space $(X,g,x_0)$ in the following sense.
For all $s<1$  $B_{s}^g(x_0)$ is compactly contained in $X$
and there exist smooth diffeomorphisms
$F_i:B^{g}_{s}(x_0) \to
  (F_i(B^{g_i}_{s}(y_i)) \subseteq B^{ g_i}_{1}(y_i)$
  with $F_i(x_0) = y_i$, 
  such that $(F_i)^*( g_i) \to g$ in 
$W^{2,p}((B^{g}_{s}(x_0)) \cap W^{1,2p}((B^{g}_{s}(x_0))$, after
taking a subsequence. Also the map $(F_i)^{-1} :(B^{g_i}_{\si}(m_i),d_{g_i})
\to  (B^{g}_{2\si}(m),d_g)$, for any $m_i $ with $F_i^{-1}(m_i)
= m$ is an $\ep(i)$ Gromov-Hausdorff
approximation, $\ep(i) \to 0$ as $i \to \infty$, if $B^{g}_{8\si}(m) \subseteq B_{1}^g(s)(x_0)$:
 distance converges with respect to this map and
its inverse, as $i \to \infty$.
In particular, we have
\begin{eqnarray*}
  \lim_{i\to \infty} \int_{B_{s}^{g_i}(x)}  \Vert \Rm(g_i)\Vert^{p} d\mu_{g_i}
  =
   \int_{B_{s}^{g}(x)} \Vert \Rm(g)\Vert^{p}
   d\mu_{g} \,, 
\end{eqnarray*}
for all $s<1$.
\end{thm}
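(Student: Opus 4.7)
The plan is to upgrade the uniform harmonic radius estimate from Theorem \ref{thm:har} to a pointed smooth convergence statement, essentially by running the bootstrap argument of that theorem globally (patching charts via a diagonal/Cheeger-Gromov argument) and then reading off Ricci-flatness of the limit.

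First I would apply Theorem \ref{thm:har} directly: because the hypotheses there are satisfied for all $i$ sufficiently large (the $L^p$ Ricci norm is assumed to go to zero, so is $\leq 1$ eventually), the $2p$-harmonic radius bound $r^{g_i}_{\mathrm{har},2p}(z)\geq L(1-s)$ for $z\in B_s^{g_i}(y_i)$ holds. This gives, for each $\rho<1$, a uniform (in $i$) supply of harmonic coordinate charts $\psi_i^\alpha : U_i^\alpha \to B_{r_0}^{\mathrm{std}}(0)$ covering $B_\rho^{g_i}(y_i)$, with bounded intersection number, uniform $C^0$ equivalence to $\delta$, uniform $W^{1,2p}$ control of the metric coefficients $g_i^\alpha$, and harmonic transition functions.

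Next I would extract a limit along these charts. Since $2p>n$, Morrey embedding gives uniform $C^{0,\beta}$ control on $g_i^\alpha$, and Arzelà-Ascoli plus a Cantor diagonal over the countably many centers produces $C^{0,\beta}$ subsequential limits $h^\alpha$ of $g_i^\alpha$ on compact subsets. The harmonic transition functions $(\alpha_i)_{\alpha'}^{\alpha}$ satisfy the elliptic equation $\sum g^{jk}_{\psi_i^\alpha}\partial_j\partial_k((\alpha_i)_{\alpha'}^\alpha)^m=0$, so Schauder theory yields $C^{2,\beta}$ subsequential convergence of transition functions to limits $\alpha_{\alpha'}^\alpha$. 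This produces a $C^{2,\beta}$ limit manifold $X$ with a $C^{0,\beta}$ metric $h$. The pointed diffeomorphisms $F_i$ into $D^n_i$ are then obtained by assembling a partition of unity subordinate to the limit atlas on $B_s^g(x_0)$ and pushing forward by $\varphi_i^\alpha\circ(\varphi^\alpha)^{-1}$ on each chart; since the transition functions converge, standard center-of-mass or gluing arguments (as in the Cheeger-Gromov convergence theorem in harmonic coordinates) produce global smooth diffeomorphisms $F_i:B_s^g(x_0)\to D^n_i$ with $F_i(x_0)=y_i$.

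The main obstacle, and the heart of the proof, is promoting the $C^{0,\beta}$ convergence to $W^{2,p}\cap W^{1,2p}$ convergence and showing the limit is Ricci flat and smooth. For this I would repeat verbatim the bootstrap of Theorem \ref{thm:har}: the identity $g^{ab}\partial_a\partial_b g_{jk}=(g^{-1}\ast g^{-1}\ast Dg\ast Dg)_{jk}-2\Ric(g)_{jk}$ in harmonic coordinates, combined with the uniform $W^{1,2p}$ bound and the hypothesis $\int_{B_1^{g_i}(y_i)}\|\Ric(g_i)\|^p\to 0$, gives uniform $W^{2,p}$ bounds on $g_i^\alpha$ on compact subsets via the $L^p$ elliptic theory (Theorem 9.11 of \cite{GiTr}). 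Then exactly the Cauchy argument in Theorem \ref{thm:har}, applied to differences $g_i^\alpha-g_j^\alpha$, upgrades the convergence $g_i^\alpha\to h^\alpha$ to strong convergence in $W^{2,p}\cap W^{1,2p}$ on compact subsets. Passing to the limit in the above identity yields $h^{ab}\partial_a\partial_b h_{jk}=-(h^{-1}\ast h^{-1}\ast Dh\ast Dh)_{jk}$, so $\Ric(h)=0$ in these coordinates, and the Sobolev iteration of Theorem \ref{thm:har} promotes $h$ to be $C^\infty$ in harmonic coordinates, which in turn upgrades $X$ to a smooth manifold via the harmonic transition function equation.

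Finally, with the global $F_i$ in hand and $W^{2,p}$ convergence of $F_i^\ast g_i\to g$, the Gromov-Hausdorff statement follows from the fact that $C^0$-convergence of metrics together with uniform noncollapsing forces the induced distance functions to converge on any ball whose enlargement still lies in the convergence region (the factor $8\sigma$ is to guarantee minimizing geodesics realizing $d_{g_i}$ stay in the domain of $F_i^{-1}$). The curvature convergence
\[
 \lim_{i\to\infty}\int_{B_s^{g_i}(y_i)}\|\Rm(g_i)\|^p\,d\mu_{g_i}=\int_{B_s^g(x_0)}\|\Rm(g)\|^p\,d\mu_g
\]
is then a direct consequence of $F_i^\ast g_i\to g$ in $W^{2,p}\cap W^{1,2p}$, since the integrand is expressible as $(g^{-1}\ast g^{-1}\ast D^2g+g^{-1}\ast g^{-1}\ast Dg\ast Dg)$ and the $W^{2,p}\cap W^{1,2p}$ norms control precisely this combination; a routine application of the generalized dominated convergence / Vitali convergence theorem (after passing to a pointwise a.e.\ subsequence) gives the equality.
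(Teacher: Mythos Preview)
Your proposal is correct and follows essentially the same route as the paper: invoke Theorem~\ref{thm:har} for the harmonic radius bound, then rerun the convergence and bootstrap argument from its proof (harmonic coordinates, $C^{2,\alpha}$ control of transition functions, the elliptic identity \eqref{richar}, $L^p$ theory for $W^{2,p}$ bounds and the Cauchy argument for strong $W^{2,p}\cap W^{1,2p}$ convergence), pass to the limit in \eqref{richar} to obtain $\Ric(h)=0$ and smoothness, and assemble the $F_i$ from the converging transition functions. The paper's own proof is terser—it simply observes that the part of the proof of Theorem~\ref{thm:har} after the rescaling step used only \eqref{intestimatesRic} for smoothness (and \eqref{RLpbound} only for flatness, which is not claimed here), so one may replay it verbatim—but what you wrote is precisely that replay made explicit.
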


\begin{proof}
By assumption we may apply Theorem \ref{thm:har} to all the metrics $g_i$,
showing that the $2p$-harmonic radius on $B_{1}(y_i)$ is bounded
uniformly from below by $L(1-s)$, where $L$ is without loss of
generality less than $1/100$.
In these coordinates we have to establish the above claimed subconvergence.

Now after finding the contradiction subsequence $g_i$
in the proof of Theorem \ref{thm:har} we only used the estimate (\ref{intestimatesRic})
to construct the limit metric, which is then smooth. The estimate \eqref{RLpbound} was only 
used to prove the flatness of the limit metric.
Hence we may proceed precisely as above, replacing the $g_i$'s of the
proof of  Theorem \ref{thm:har}  by the $g_i$'s given in the statement
of the Theorem here, and we deduce convergence of the sequence
$(g_i)_{i\in \N}$
locally (in the harmonic coordinates from above) in the $W^{2,p}\cap
W^{1,2p}$-topology. The diffeomorphisms $F_i$ are
constructed using the transition functions, which we know converge
locally in $C^{2,\al}$, as explained in the proof of Theorem
\ref{thm:har}  above. This, combined with the
fact that $(g_i)_{i\in \N}$ converges to $h$ in $W^{2,p}\cap W^{1,2p}$  in the harmonic
coordinates from above, implies that
  $(F_i)^*( g_i) \to g$ in 
$W^{2,p}((B^{g}_{1-2\de}(x_0)) \cap
W^{1,2p}((B^{g}_{1-2\de}(x_0))$ as $i \to \infty$: 
see for example \mbox{Appendix B} in \cite{Sim} for
details on the  construction of the diffeomorphisms $F_i$ appearing in
the statement of this Theorem, and why the previous statement is true.
\end{proof}


\section{Curvature estimates}\label{sec:appl}

In this section we discuss the applications of the Gap Theorem to the study of homogeneous Ricci flows. The proofs of Theorem \ref{main:long} and Corollary \ref{main:cor} are presented. Moreover, we show that for homogeneous Ricci flows the Ricci curvature
satisfies  a \emph{time
doubling property}, similar to that of the full curvature tensor.

A Ricci flow solution is called homogeneous, if it is homogeneous at any time.
Notice, that it is sufficient to assume homogeneity at the initial time:
Homogeneity implies bounded curvature, hence for such initial metrics
there exists a unique solution with bounded curvature. As is well-known it 
then follows that isometries of the initial metrics will also be isometries
for all evolved metrics. Moreover, by \cite{Kot} the isometry group does not change over 
time.

By the Gap Theorem there exists $C(n)>0$, such that every $n$-di\-men\-sional homogeneous manifold $(M^n,g)$ satisfies
\[
	\Vert \Rm(g) \Vert \leq C(n) \cdot \Vert \Ric(g) \Vert.
\]
Theorem \ref{main:long} follows now directly from

\begin{thm}\label{Rmleqscal}
 If $\left(M^n, g(t)\right)_{t\in[a,b]}$ is a homogeneous Ricci flow solution then 
 \begin{equation}\label{Rmestimate}
 	\Vert \Rm(g(b)) \Vert \leq  \max \big\{ \tfrac1{8 (b-a)}\,, \,\, 16 \cdot C(n)^2 \cdot \big( \scal(g(b)) - \scal(g(a)) \big)\big\}.
 \end{equation}
\end{thm}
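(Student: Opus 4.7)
The plan is a doubling-time argument for the curvature norm along the flow, combined with the Gap Theorem and the evolution equation for scalar curvature. Set $R(t) := \Vert \Rm(g(t))\Vert_{g(t)}$. Because the metric is homogeneous at every time, every scalar curvature invariant is spatially constant, so the Laplacian terms in Hamilton's evolution equations vanish. In particular, from
\[
  \partial_t |\Rm|^2 = \Delta|\Rm|^2 - 2|\nabla\Rm|^2 + (\Rm\ast\Rm)\cdot\Rm,
\]
after discarding $\Delta|\Rm|^2=0$ and the favorable term $-2|\nabla\Rm|^2$ and using the standard reaction bound $|(\Rm\ast\Rm)\cdot\Rm|\leq 16|\Rm|^3$, one obtains the ODE inequality $\dot R \leq 8 R^2$.

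Integrating this ODE (by considering $u := 1/R$, which satisfies $\dot u \geq -8$) yields a backward doubling-time estimate: setting $\tau := 1/(8R(b))$, one has
\[
  R(t) \;\geq\; \frac{R(b)}{1 + 8(b-t)R(b)} \;\geq\; \tfrac{1}{2}R(b)
  \qquad \text{for all } t \in [b-\tau, b].
\]
Next, use the Gap Theorem in its equivalent form $\Vert\Rm\Vert \leq C(n)\Vert\Ric\Vert$, i.e.\ $|\Ric|^2 \geq R^2/C(n)^2$. Homogeneity reduces $\partial_t \scal = 2|\Ric|^2 + 2\Delta\scal$ to the ODE $\dot{\scal}(g(t)) = 2|\Ric(g(t))|^2 \geq 2R(t)^2/C(n)^2$.

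The conclusion then splits into two cases according to the size of $\tau$ relative to $b-a$. If $\tau \geq b-a$, then $R(b) \leq 1/(8(b-a))$ and the first entry in the maximum gives the bound. If instead $\tau < b-a$, then $[b-\tau, b] \subseteq [a,b]$, and integrating the lower bound $\dot\scal \geq 2 R(t)^2/C(n)^2 \geq R(b)^2/(2C(n)^2)$ over $[b-\tau,b]$ yields
\[
  \scal(g(b)) - \scal(g(a)) \;\geq\; \int_{b-\tau}^{b} \frac{R(b)^2}{2C(n)^2}\,dt \;=\; \tau\cdot\frac{R(b)^2}{2C(n)^2} \;=\; \frac{R(b)}{16\,C(n)^2},
\]
which rearranges to $R(b) \leq 16\,C(n)^2\bigl(\scal(g(b))-\scal(g(a))\bigr)$.

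The main obstacle is pinning down the numerical constant in the reaction-term estimate for $\partial_t|\Rm|^2$ so that it matches the factors $1/8$ and $16\,C(n)^2$ in the statement; once $\dot R \leq 8R^2$ is available, the rest is ODE comparison plus one invocation of the Gap Theorem. A subtle point worth double-checking is that the homogeneity assumption persists along the flow (so that $\Delta|\Rm|^2$ and $\Delta\scal$ really vanish at every time $t \in [a,b]$), which is ensured by uniqueness of the Ricci flow with bounded curvature together with $\Iso(g(a)) \subseteq \Iso(g(t))$ noted at the beginning of the section.
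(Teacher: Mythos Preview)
Your proof is correct and follows essentially the same route as the paper: split into the two cases $\tau\geq b-a$ and $\tau<b-a$ with $\tau=1/(8\Vert\Rm(g(b))\Vert)$, and in the second case combine the backward doubling-time estimate $\Vert\Rm(g(t))\Vert\geq\tfrac12\Vert\Rm(g(b))\Vert$ on $[b-\tau,b]$ with the Gap Theorem and the evolution $\scal'=2\Vert\Ric\Vert^2$. The only cosmetic differences are that the paper quotes the doubling-time estimate as a black box (citing \cite{CLN}) rather than deriving $\dot R\leq 8R^2$, and it phrases the second case as a two-sided bound on $\int_a^b\Vert\Rm\Vert^2\,dt$ rather than integrating $\scal'$ directly over $[b-\tau,b]$.
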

\begin{proof}
Let  $K := \Vert \Rm(g(b)) \Vert$. If $\frac1{8K} \geq b-a$ then $K \leq (8(b-a))^{-1}$
and the claim follows.
We are left with the case  $\frac1{8K} \leq b-a$.
The above bound implies that
\begin{align*}
	\int_a^b \Vert \Rm(g(t)) \Vert^2 dt  &\leq 
	\tfrac{C(n)^2}{2} \int_a^b 2\, \Vert \Ric(g(t)) \Vert^2 dt \\
	&= \tfrac{C(n)^2}{2} \int_a^b \scal(g(t))' dt \\
	&= \tfrac{C(n)^2}{2} \big(\scal(g(b)) - \scal(g(a))\big).
\end{align*}
Now if $t \in [b - \frac1{8K},b]$, then by the doubling time estimate we have
$\Vert \Rm(g(t)) \Vert \geq \unm \cdot \Vert \Rm(g(b)) \Vert$. Using
that $a \leq b - \frac1{8K}$, 
we deduce that
\[
	\int_a^b \Vert \Rm(g(t)) \Vert^2 dt \, \, \geq \, \, \int_{b-\frac1{8K}}^b \Vert \Rm(g(t)) \Vert^2 dt \,\, \geq \, \, \tfrac{K^2}{4}  \cdot \tfrac1{8K} = \tfrac{K}{32}\, ,
\]
and the theorem follows.
\end{proof}

Recall that along a homogeneous Ricci flow $(M^n,g(t))_{t\in [a,b]}$ the scalar curvature $s(t) := \scal(g(t))$ is a constant function on the manifold, hence 
\[
	s(t)' = 2 \cdot \Vert {\Ric(g(t)) } \Vert^2 \, \geq \, \tfrac{2}{n} \cdot s(t)^2\,.
\] 
If $s(t)$ does not vanish for $t\in [a,b]$ then by integrating we get
\begin{equation*}
	- \tfrac1{s(b)} + \tfrac1{s(a)}  \geq \tfrac{2}{n} \cdot (b-a),
\end{equation*}
Assuming that $s(a) > 0$, which implies $s(b) > 0$, one gets that 
\begin{equation}\label{eqn:estimatescalpos}
	 s(a)  \leq  \tfrac{n}{2} \cdot {(b-a)}^{-1},
\end{equation}
and on the other hand if $s(b)<0$ then also $s(a)<0$ and one obtains 
\begin{equation}\label{eqn:estimatescalneg}
	\vert s(b) \vert \leq  \tfrac{n}{2} \cdot {(b-a)}^{-1}.
\end{equation}
If in addition there exists a constant $C_1(n) > 0$ such that
\begin{equation}\label{eqn:estimatescalup}
	s(t)'  \, \leq \, C_1(n) \cdot s(t)^2\,
\end{equation}
then one also gets the reversed inequality 
\begin{equation}\label{eqn:estimatereverse}
	- \tfrac1{s(b)} + \tfrac1{s(a)} \leq C_1(n) \cdot (b-a).
\end{equation}

\begin{proof}[Proof of Corollary \ref{main:cor}]
Let $(M^n,g(t)_{ t \in [0,T)})$ be a homogeneous Ricci flow solution with finite extinction 
time $T < \infty$ and $\scal(g(0)) = 1$. Taking $a=0, b = t$ in \eqref{Rmestimate} yields
\[
	\Vert \Rm(g(t)) \Vert \leq  \max \big\{ \tfrac1{8 t}\,, \,\, 16 \cdot C(n)^2 \cdot \big( \scal(g(t)) - 1 \big)\big\}.
\]
Observe that the first expression on the right hand side is decreasing, whereas the second is increasing and diverges to $+\infty$ as $t\to T$ since the curvature blows up at $T$. Therefore there exists a unique $t_0 \in (0, T)$ such that 
\[
	\tfrac1{8 \, t} \leq 16 \cdot C(n)^2 \cdot (\scal(g(t)) -1)
\] 
for all $t \in [t_0,T)$ with equality at $t_0$.
Hence, on  $[t_0, T)$ we have the estimate
\begin{equation}\label{eqn:estimateRmscal2}
	\Vert \Rm(g(t)) \Vert \leq C_2(n) \cdot \scal(g(t)),
\end{equation}
The upper bound stated in the first statement of Corollary \ref{main:cor} now follows by taking $a=t > t_0, b\to T$ in \eqref{eqn:estimatescalpos}.
The lower bound is well-known and follows immediately from the doubling time estimate:
see Lemma 6.1 in \cite{CLN}.

It remains to prove an upper bound for $t_0$ in terms of $T$. First notice that since $\scal(g(0)) = 1$, from \eqref{eqn:estimatescalpos} for $a=0$, $b\to T$ we have that $t_0 < T \leq n/2$, which in turn gives a uniform upper bound 
\[
	\scal(g(t_0)) \cdot t_0 \leq C_3(n)
\] 
by the very definition of $t_0$.
On the other hand, the estimate \eqref{eqn:estimateRmscal2} gives an upper bound for $\scal{'}$ as in \eqref{eqn:estimatescalup}.  Thus from \eqref{eqn:estimatereverse} for $a=t_0$, $b\to T$ we get
\[
	 \tfrac{t_0}{C_3(n)} \leq   \tfrac1{\scal(g(t_0))} \leq C_1(n) \cdot (T-t_0)\,,
\]
and this implies that $t_0 \leq \delta(n) \cdot T$ for some uniform $\delta(n) \in (0,1)$.

If the solution is immortal, i.e.~ defined for all $t\in [0,\infty)$, we take $a=t, b=2t$ in \eqref{Rmestimate} and obtain
\[
	\Vert \Rm(g(t)) \Vert \leq \max \big\{\tfrac1{8 t}, -16 \cdot C(n)^2 \cdot \scal(g(t)) \big\}.
\]
Taking $a=0, b=t$ in \eqref{eqn:estimatescalneg} yields the desired estimate. 

If the solution is ancient, i.e.~ defined for $t\in (-\infty,-1]$, we
take $a \to -\infty$, $b = t < -1$ in \eqref{Rmestimate}. This gives us
\begin{equation}\label{eqn:estimateRmscal}
	\Vert \Rm(g(t)) \Vert \leq C_2(n) \cdot \scal(g(t))\,,
\end{equation}
since $\scal(g(a)) \to 0$ as $a\to -\infty$ by (\ref{eqn:estimatescalpos}). 
The upper bound now follows by taking $b=-1, a=t\leq -1$ in \eqref{eqn:estimatescalpos}. To prove the lower bound, notice that from \eqref{eqn:estimateRmscal} we immediately get an upper bound for the evolution of scalar curvature as in \eqref{eqn:estimatescalup}. Thus we can apply \eqref{eqn:estimatereverse} for $b = -1$, $a = t < -1$, and this finishes the proof.
\end{proof}

\begin{rem}\label{rem_ass}
The assumptions on the scalar curvature in Corollary \ref{main:cor} are not restrictive for non-flat solutions. Indeed, for the finite extinction time case, it follows from \cite{La},
and also from Theorem \ref{Rmleqscal}, that the scalar curvature blows up at the extinction time. Regarding immortal solutions, it is well-known that the scalar curvature cannot be positive, since this would imply finite extinction time. And if it vanishes, the solution is Ricci flat and hence flat. One can argue analogously for ancient solutions. In each case one can then scale the initial metric so that the assumptions are satisfied. 
\end{rem}

The following example shows, that even along homogeneous Ricci flow solutions
with positive scalar curvature the norm of the curvature tensor can decrease 
tremendously.

\begin{lem}\label{lem_s3}
On $S^3$ there exist a sequence $((g_l(t))_{t\in [0,T(l))})_{l \in \N}$ 
of homogeneous Ricci flow solutions with  $\Vert \Rm(g_l(0))\Vert=1$,
$\scal(g_l(0))>0$ and $T(l) \to \infty$ for $l\to \infty$.
\end{lem}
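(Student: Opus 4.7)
The plan is to use Berger metrics on $S^3 = \SU(2)$ whose Hopf-fiber parameter tends to the degenerate value at which the scalar curvature vanishes. In the standard left-invariant basis $\{e_1, e_2, e_3\}$ of $\su(2)$ with $[e_i, e_j] = 2 \epsilon_{ijk} e_k$, let $h_\mu$ denote the Berger metric for which $\{e_1, e_2, e_3/\mu\}$ is orthonormal. A direct computation (via O'Neill's formula for the Hopf submersion $S^3 \to S^2$, or via Milnor's frame on $\SU(2)$) gives the sectional curvatures
\[
K(e_1, e_2) \,=\, 4 - 3 \mu^2, \qquad K(e_1, e_3/\mu) \,=\, K(e_2, e_3/\mu) \,=\, \mu^2,
\]
hence $\scal(h_\mu) = 8 - 2\mu^2$ and $\Vert \Rm(h_\mu) \Vert^2 = 4\bigl((4 - 3 \mu^2)^2 + 2 \mu^4\bigr)$. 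The right-hand side of the latter is bounded above and below by positive constants on any compact sub-interval of $(0, \infty)$.

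Fix a sequence $\mu_l \nearrow 2$ and set $g_l(0) := \Vert \Rm(h_{\mu_l}) \Vert \cdot h_{\mu_l}$. Using the scaling identity $\Vert \Rm(\lambda g) \Vert = \lambda^{-1} \Vert \Rm(g) \Vert$ for the $(0,4)$-tensor norm, this yields $\Vert \Rm(g_l(0)) \Vert = 1$, while
\[
\scal(g_l(0)) \,=\, \frac{8 - 2 \mu_l^2}{\Vert \Rm(h_{\mu_l}) \Vert} \,>\, 0 \qquad \text{and} \qquad \scal(g_l(0)) \,\underset{l\to\infty}\longrightarrow\, 0^+.
\]
Homogeneity of the initial datum is preserved along Ricci flow, so the maximal solution $(g_l(t))_{t \in [0, T(l))}$ is well defined.

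The final step, which is the main obstacle of the argument, is the extinction-time divergence $T(l) \to \infty$. The upper half is free: the identity $\partial_t \scal = 2 \Vert \Ric \Vert^2 \geq \tfrac{2}{3} \scal^2$ (Cauchy--Schwarz in dimension $3$) integrates to $T(l) \leq \tfrac{3}{2\,\scal(g_l(0))}$, whose right-hand side already tends to $+\infty$. The matching lower bound requires an explicit analysis of the Bianchi~IX Ricci flow for Berger data. Writing the rescaled metric as $g_l(t) = A(t)(e_1^* \otimes e_1^* + e_2^* \otimes e_2^*) + C(t)\, e_3^* \otimes e_3^*$, the ODEs
\[
\dot A \,=\, -\bigl(8 - 4\, C/A\bigr), \qquad \dot C \,=\, -\,4\, C^2/A^2
\]
starting from $A(0) = R_l^2$, $C(0) = \mu_l^2 R_l^2$ (where $R_l^2 = \Vert \Rm(h_{\mu_l}) \Vert$) must be shown to keep $\scal(g_l(t))$ comparable to $\scal(g_l(0))$ for a time of order $\scal(g_l(0))^{-1}$, so that scalar blow-up (and hence curvature blow-up, by the Gap Theorem) is postponed accordingly. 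The heart of the proof is therefore this ODE estimate near the critical line $\mu = 2$.
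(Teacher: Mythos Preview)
Your approach has a genuine gap that cannot be repaired: for Berger metrics with $\mu_l \nearrow 2$, the extinction times $T(l)$ remain \emph{bounded}. The ODE estimate you defer to the end actually goes the wrong way.

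To see this, work with your own variables $A,C$ and set $\beta = C/A$. From $\dot A = -8 + 4\beta$ and $\dot C = -4\beta^2$ one computes $\dot\beta = 8\beta(1-\beta)/A$, and a direct check shows that
\[
   K \;:=\; \frac{C^2}{\beta - 1}
\]
is conserved along the flow for $\beta>1$. Hence $A = C/\beta = \sqrt{K(\beta-1)}/\beta$, and the extinction time is
\[
   T \;=\; \int_{1}^{\beta(0)} \frac{A}{8\beta(\beta-1)}\,d\beta
        \;=\; \frac{\sqrt{K}}{8}\int_{1}^{\beta(0)} \frac{d\beta}{\beta^2\sqrt{\beta-1}}\,.
\]
As $\mu_l \to 2$ you have $\beta(0)=\mu_l^2 \to 4$ and $K = C(0)^2/(\beta(0)-1) = \mu_l^4 R_l^2/(\mu_l^2-1)$ (with $R_l = \Vert \Rm(h_{\mu_l})\Vert$ bounded), so both $K$ and the integral stay bounded. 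Thus $T(l)$ is bounded. Heuristically: although $\scal(g_l(0))\to 0$, the Ricci tensor at $t=0$ is approximately $\mathrm{diag}(-c,-c,2c)$ with $c>0$ bounded away from zero, so $\scal' = 2\Vert\Ric\Vert^2$ is bounded below and $\scal$ reaches order one in bounded time.

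The paper's argument is essentially different and cannot be reproduced within the $\U(2)$-invariant (Berger) family. It uses \emph{triaxial} left-invariant metrics, i.e.\ all three eigenvalues $x_1,x_2,x_3$ distinct. The initial data are chosen with $\scal=0$ and $\Vert\Ric\Vert$ \emph{large}; the key phenomenon is that along the volume-normalized flow the solution passes through a region where $\Vert\Ric\Vert$ becomes arbitrarily \emph{small} (of order $l^{-1/3}$). At that intermediate time the curvature is small, so by the doubling-time estimate the remaining (unnormalized) extinction time must be large. A final small time-shift then gives $\scal>0$ at the new initial time. The Berger subfamily is invariant and never exhibits this Ricci-pinching-to-zero behaviour, which is why your reduction to it cannot succeed.
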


\begin{proof} On $S^3={\rm SU}(2)$ left-invariant metrics are in one-to-one 
correspondence with scalar product on the Lie algebra $\su(2)=T_e{\rm SU}(2)$.
Let $Q(X,Y)=\frac{1}{2} \tr (X \cdot Y^*)$ and consider
for $x_1,x_2,x_3>0$ the left invariant metrics $g=g(x_1,x_2,x_3)$, given by
$$
   g= x_1 \cdot Q\vert_{\Lm_1} \perp  
        x_2 \cdot Q\vert_{\Lm_2} \perp 
        x_3 \cdot Q\vert_{\Lm_3}\,.
$$  
Here $\Lm_1$ is spanned by the diagonal matrix with entries $\pm i$, 
$\Lm_2$ is spanned by the real skew-symmetric matrices and
$\Lm_3$ is spanned by the symmetric matrix having $i$ at the off-diagonal entries.
To compute the diagonal entries $r_1,r_2,r_3$ of the Ricci endomorphism $\Ric(g)$, see (\ref{eqn_Ricend}), of the metrics $g$ with respect to
the decomposition $\su(2)=\Lm_1\oplus \Lm_2 \oplus \Lm_3$ we use the formula (\ref{ricci}).
 This yields 
\begin{eqnarray*}
  r_1   &=& \tfrac{2}{x_1x_2x_3}\cdot (x_1^2-(x_2-x_3)^2) \\
r_2 &=& \tfrac{2}{x_1x_2x_3}\cdot (x_2^2-(x_1-x_3)^2) \\               
r_3 &=&\tfrac{2}{x_1x_2x_3}\cdot (x_3^2-(x_1-x_2)^2)           \,.
\end{eqnarray*}
Here we have used that  $b_1=b_2=b_3=8$ (see  \cite{WZ1}, p.~583),
that $[123]=4$ and that of course $d_1=d_2=d_3=1$. 
As is well-known the off-diagonal entries
of the Ricci endomorphism vanish: see Chapter 1, Section 5 in \cite{CK}.

The Ricci flow equation for these metrics is
given by $x_i'=-2x_i\cdot r_i$, $i=1,2,3$ and the volume normalized
Ricci flow by $x_i'=-2x_i\cdot r_i^0$, $r_i^0=r_i-\tfrac{1}{3}(r_1+r_2+r_3)$.
Recall that after a reparametrization in space and time the Ricci flow
and the volume normalized Ricci flow are equivalent.
Solutions to the normalized Ricci flow will be denoted by 
$(\bar g(\bar t))_{\bar t \in [0,\bar T)}$.

It is now convenient to introduce
new coordinates $\alpha =\tfrac{x_2}{x_1}$ and $\beta=\tfrac{x_3}{x_1}$.
Notice that 
$\tfrac{\alpha'}{\alpha}=\tfrac{x_2'}{x_2}-\tfrac{x_1'}{x_1}=2(r_1-r_2)$
and $ \tfrac{\beta'}{\beta}=2(r_1-r_3)$. The volume constraint $x_1x_2x_3=1$
reads now $(\alpha\beta)^\frac{1}{3}=\tfrac{1}{x_1}$. Therefore, the volume normalized Ricci flow, fixing volume one, is equivalent to
\begin{eqnarray*}
  \alpha' &=&
    \tfrac{8}{(\alpha\beta)^\frac{2}{3}}\cdot \alpha \cdot (1-\alpha)\cdot (1+\alpha-\beta)\\
   \beta' &=&
    \tfrac{8}{(\alpha\beta)^\frac{2}{3}}\cdot \beta \cdot (1-\beta)\cdot (1+\beta-\alpha)\,.  
\end{eqnarray*}
The sets $\{\alpha \equiv 1\}$, $\{\beta \equiv 1\}$ and
$\{\alpha \equiv \beta\}$ are invariant under this ordinary differential 
equation and $(1,1)$
is the unique zero in the domain $\{\alpha,\beta>0\}$.

For $l \in \N$, $l\geq 100$, we choose initial values 
$\alpha_0^l:=\tfrac{l}{4}+\frac{\sqrt{l-1}}{2}$
and $\beta_0^l:=\tfrac{l}{4}-\frac{\sqrt{l-1}}{2}$. Clearly $\alpha_0^l,\beta_0^l >1$
and $1+\alpha_0^l-\beta_0^l>0$, whereas $1+\beta_0^l-\alpha_0^l<0$.
That is, for the solution $(\alpha(t),\beta(t))$ with initial values $(\alpha_0^l,\beta_0^l)$
at time $\bar t=0$
we have $\alpha'<0$ and $\beta'>0$ until this solution reaches the line
$\{1+\beta-\alpha\equiv 0\}$ at a time $\bar t_0^l>0$. Notice that $\beta(\bar t_0^l)>\beta_0^l$
of course and that the time $\bar t_0^l$ is unique.

The initial values or chosen  such that $\scal(\bar g_l(0))=0$ and 
$\Vert \Ric(\bar g_l(0))\Vert^2 \geq c_1 \cdot l^{\frac{1}{3}}$ for $c_1>0$ independent of $l$. Another computation shows that at time $\bar t_0^l$ we have
$\scal((\bar g_l(\bar t_0^l))^2=\Vert \Ric(\bar g_l(\bar t_0^l))\Vert^2 \leq c_2 \cdot l^{-\frac{2}{3}}$ for a constant
$c_2$, independent of $l$.

Next, let $(g(t))_{t \in [0,T)}$ denote a solution to the unnormalized Ricci flow
and set $V(t):= \sqrt{x_1(t)x_2(t)x_3(t)}$. Notice that 
up to a constant this equals to the volume of $(S^3,g(t))$. As is well-known, but
also follows from the above equations, we have $V'(t)=-V(t)\cdot s(t)$,
where $s(t):=\scal(g(t))$ and consequently $V'(t)=-V^{\frac{1}{3}}(t)\cdot \bar s(t)$,
with $\bar s(t)=-V^{\frac{2}{3}}(t)\cdot s(t)$. Note, that the function
$(x_1x_2x_3)^{\frac{1}{3}}\cdot
\scal(g(x_1,x_2,x_3))$ is scale invariant.

Let $(g_l(t))_{t \in [0,T(l))}$ denote the solution to the unnormalized Ricci flow 
with initial value $g_l(0)$ satisfying $V(0)=1$,
$\alpha(0)=\alpha_0^l$ and $\beta(0)=\beta_0^l$.
By the above we know that $\Vert \Ric(g_l(0))\Vert \geq  \sqrt{c_1}\cdot l^{\frac{1}{6}}$.
Next, let us denote by $t_0^l\in (0,T(l))$ the unique time with $1+\beta(t_0^l)-\alpha(t_0^l)=0$. By the above we have
$\bar s(t) \leq \bar s(t_0^l)\leq \epsilon(l):=\sqrt{c_2} \cdot l^{-\frac{1}{3}}$ 
for all $t \in [0,t_0^l]$, since the scalar curvature is still
increasing along the volume normalized Ricci flow.  
Using $V'(t)=-V^{\frac{1}{3}}(t)\cdot \bar s(t)$
and $V(0)=1$ we deduce
$V(t)\geq (1-\epsilon(l)\cdot t)^{\frac{3}{2}}$ for 
$t \leq \max\{t_0^l,\tfrac{1}{\epsilon(l)}\}$.
Suppose now $T(l)\leq C$, for all $l \geq 100$ and a constant $C>0$.
Clearly 	 this implies $t_0^l < C$ and hence there exists $l_0 >0$
such that $V(t_0^l) \geq 0.5$ for all $l\geq l_0$. Since $
\Vert \Ric(\bar g_l(\bar t_0^l))\Vert=V^{\frac{1}{3}}(t_0^l)\cdot \Vert \Ric(g(t_0^l))\Vert
\leq \epsilon(l)$, we conclude on the other hand side 
by the doubling time property of the Ricci flow that the extinction time of
these solutions cannot be uniformly bounded. Contradiction.

Finally notice that if $(g(t))_{t \in [0,T)}$ is a
Ricci flow solution then for any $\lambda>0$ also
$(\lambda\cdot g(\tfrac{t}{\lambda}))_{t \in [0,\lambda\cdot T)}$ is 
a solution. As a consequence the term $\Vert \Rm(g(0))\Vert_{g(0)} \cdot T$ is
invariant under parabolic rescaling. Then choosing $t_l>0$ as close to zero as 
we like as our new initial time and performing the parabolic rescaling just described
shows the claim.
\end{proof}

The next application is a doubling time estimate for the Ricci curvature along homogeneous Ricci flows. It shows that the Ricci curvature cannot grow too quickly. 

\begin{prop}
Let $(M^n,g(t))_{t\in [0,b]}$ be a homogeneous Ricci flow solution. If $\Vert \Ric(g(0)) \Vert = 1$, then
\[
	\Vert \Ric(g(t)) \Vert \leq 2, 
\]
for all $0\leq t \leq 1/C(n)$.
\end{prop}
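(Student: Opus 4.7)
The plan is to replicate the well-known doubling-time argument for $\|\Rm\|$, but with $\|\Ric\|$ in its place, using the Gap Theorem (Theorem \ref{main:gap}) to convert bounds on $\|\Rm\|$ into bounds on $\|\Ric\|$. Recall that on any Ricci flow solution the Bochner-type identity gives the pointwise evolution
\[
  \tfrac{\partial}{\partial t} \|\Ric(g(t))\|^2
  \,=\, \Delta\|\Ric(g(t))\|^2 - 2\|\nabla \Ric(g(t))\|^2 + Q(\Rm,\Ric),
\]
where $|Q(\Rm,\Ric)| \leq c_0(n)\, \|\Rm(g(t))\|\,\|\Ric(g(t))\|^2$ is the schematic curvature-Ricci term arising from the Lichnerowicz Laplacian.

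First, I would use homogeneity: at each time $t$ the function $\|\Ric(g(t))\|^2$ is constant on $M^n$, so $\Delta \|\Ric(g(t))\|^2 \equiv 0$. Dropping the nonpositive gradient term and applying the equivalent reformulation $\|\Rm(g)\| \leq C(n)\|\Ric(g)\|$ of the Gap Theorem (as noted in the introduction), one arrives at the closed differential inequality
\[
  \tfrac{d}{dt} \|\Ric(g(t))\|^2 \,\leq\, C_1(n)\,\|\Ric(g(t))\|^3.
\]

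Finally, set $f(t) := \|\Ric(g(t))\|^2$ with $f(0)=1$. The inequality $f' \leq C_1(n)\, f^{3/2}$ integrates (via $u := f^{-1/2}$, which satisfies $u' \geq -\tfrac{1}{2}C_1(n)$) to
\[
  \|\Ric(g(t))\|^2 \,\leq\, \bigl(1 - \tfrac{1}{2}C_1(n)\, t\bigr)^{-2},
\]
valid as long as the right-hand side is finite. Choosing $C(n) := C_1(n)$ (or any slightly larger constant), we conclude $\|\Ric(g(t))\| \leq 2$ for all $t \in [0,1/C(n)]$, as desired. The only real work is the schematic bound on $Q(\Rm,\Ric)$ and checking that homogeneity kills the Laplacian term; everything else is ODE bookkeeping, so I do not expect any serious obstacle here beyond writing out the constants cleanly. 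The conceptual point is that the Gap Theorem is exactly what promotes the usual doubling-time estimate for $\|\Rm\|$ to one for $\|\Ric\|$.
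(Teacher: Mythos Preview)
Your argument is correct and essentially identical to the paper's: both use the evolution equation for $\|\Ric\|^2$ (the paper cites \cite{CLN}, Lemma 2.40, while you write it out explicitly), drop the Laplacian by homogeneity, apply the Gap Theorem in the form $\|\Rm\|\leq C(n)\|\Ric\|$ to obtain $\tfrac{d}{dt}\|\Ric\|^2 \leq C(n)\|\Ric\|^3$, and then integrate this ODE to conclude.
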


\begin{proof}
According to the evolution equation for $\Vert \Ric \Vert^2$ along Ricci flow \cite{CLN}, Lemma 2.40, 
together with the Gap Theorem, one has that
\[
	\frac{\rm d}{{\rm d} t} \Vert \Ric \Vert^2 \leq C_1 (n) \cdot \Vert \Rm \Vert \cdot \Vert \Ric \Vert^2 \leq C(n) \cdot \Vert \Ric \Vert^3\,.
\]
Recall that in the homogeneous case ${\Vert \Ric \Vert^2}$ is a constant function. By a standard comparison argument, one obtains $\Vert \Ric(g(t)) \Vert \leq \tfrac2{2- C(n) \cdot t}$
since $\rho(t) = \tfrac2{2- C(n) \cdot t}$ is the solution to $\tfrac{\rm d}{{\rm d} t} \rho^2 = C(n)  \cdot \rho^3$, $\rho(0) = 1$. The proposition now follows.
\end{proof}


\section{Non-collapsed homogeneous ancient solutions}\label{sec:comancient}

In this section we prove Theorem \ref{thm:comp} which is essentially Theorem \ref{main:comp}, and we show in Lemma \ref{lem:ancient} that for any unstable homogeneous Einstein metric there exists a non-collapsed homogeneous ancient solution emanating from it.

Since non-trivial ancient solutions  $(g(t))_{t \in (-\infty,0]}$
to the Ricci flow have positive scalar curvature,
non-trivial ancient  homogeneous  solutions must develop a \mbox{Type I} singularity 
close to their extinction time  by Corollary \ref{main:cor}.  By \cite{Na} and \cite{EMT}
the blow-up of such a solution will subconverge to a non-flat homogeneous gradient shrinking soliton. These homogeneous limit solitons where classified by \cite{PW}. 
Up to finite coverings they are the Riemannian product
of a compact homogeneous Einstein space and a flat factor. Notice that
the flat factor might be absent.

Also by Corollary \ref{main:cor}, ancient homogeneous solutions
 develop a \mbox{Type I} behavior in the past. It is then natural to consider
the corresponding blow-downs
$$ 
   g_i(t):=\tfrac{1}{s_i}\cdot g(s_i\cdot t)
$$
for a sequence $\{s_i\}_{i \in \N}$ with $s_i \to \infty$ and all 
$t \in (-\infty,0]$. In the non-collapsed situation,
it follows by \cite{Na} and \cite{CZ} that the sequence
$(g_i(t))_{ i \in \N}$ subconverges to a non-flat asymptotic soliton.

A compact homogeneous space has a presentation $M^n=G/H$,
where $G$ is a compact Lie group acting transitively on $M^n$ with compact isotropy 
group $H$.
Notice that $G$ and $H$ are not necessarily connected. Since $G$ is  compact, 
there exists an ${\rm Ad}(G)$-invariant scalar product $Q$ on the
Lie algebra $\Lg$ of $G$. Let $\Lm$ denote the $Q$-orthogonal complement of
$\Lh$ in $\Lg$. Then
the set $\mathcal{M}^G$ of $G$-homogeneous metrics on $G/H$
can be viewed as the set of ${\rm Ad}(H)$-invariant scalar products on $\Lm$.
This set in turn can be viewed as the Euclidean space $S^2(\Lm)^{{\rm Ad}(H)}$ 
of symmetric, positive-definite, ${\rm Ad}(H)$-equivariant linear 
endomorphisms of $\Lm$ as follows $ g(x,y) = Q\vert_{\Lm}(g\cdot x,y)$,
where $x,y \in \Lm$. Recall, that a $G$-homogeneous Einstein metric on $G/H$
is a critical point of the total scalar curvature functional 
$$
  \TS: \mathcal{M}^G_1 \to \R \,\,;\,\,g \mapsto \scal(g)
  $$
restricted to the space $\mathcal{M}^G_1$ of $G$-homogeneous metrics of volume one,
and that the gradient flow 
$$
   g'(t) = -2\cdot g(t)\cdot \Ric_0(g(t))
$$
of $\TS$ is nothing but the volume-normalized Ricci flow for $G$-homogeneous metrics.
Here we consider the Ricci-endomorphism $\Ric(g(t))$, defined by
\begin{eqnarray}
   \Ric(g(t))(x,y) = g(t)(\Ric(g(t))\cdot x,y)\label{eqn_Ricend}
\end{eqnarray}
and then also $g(t)$ as an endomorphism as above.
Since the space of $G$-homogeneous metrics is finite-dimensional, we have existence and uniqueness of homogeneous Ricci flow solutions also backwards in time.

\begin{lem}\label{lem:analyticflow}
The gradient flow of $\TS$ on $\mathcal{M}^G_1$ is analytic.
\end{lem}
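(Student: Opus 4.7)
The plan is to recognize $\mathcal{M}^G_1$ as a real analytic submanifold of a finite-dimensional vector space, show that the gradient vector field of $\TS$ is given by an analytic formula in $g$, and then invoke the classical fact that flows of real analytic vector fields are real analytic.

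First, I would observe that $\mathcal{M}^G$ is an open subset of the finite-dimensional real vector space $S^2(\Lm)^{\Ad(H)}$, namely the cone of positive-definite $\Ad(H)$-equivariant symmetric endomorphisms of $\Lm$. The volume-one constraint amounts, in a $Q$-orthonormal basis of $\Lm$, to the polynomial equation $\det(g) = 1$, which has nowhere-vanishing differential on the positive-definite cone. Thus $\mathcal{M}^G_1$ is a real analytic codimension-one submanifold of $S^2(\Lm)^{\Ad(H)}$.

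Next, I would use the standard algebraic formula for the Ricci endomorphism of a $G$-invariant metric (cf.\ the computation carried out in the proof of Lemma \ref{lem_s3} together with (\ref{eqn_Ricend})): after fixing a $Q$-orthonormal basis $\{X_i\}$ of $\Lm$ with structure constants $[ijk] := Q([X_i,X_j],X_k)$, the entries of $\Ric(g)$ are polynomials in the $[ijk]$ and in the entries of $g$ and $g^{-1}$. Since matrix inversion is a rational, hence real analytic, map on $\{\det \neq 0\}$, the map $g \mapsto \Ric(g)$ is real analytic on $\mathcal{M}^G$. Consequently $\scal(g) = \tr \Ric(g)$, the traceless part $\Ric_0(g) = \Ric(g) - \tfrac{\scal(g)}{n}\Id$, and the vector field $V(g) := -2\, g \cdot \Ric_0(g)$ are all real analytic on $\mathcal{M}^G$.

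Finally, since $\tr \Ric_0(g) = 0$, the vector field $V$ is tangent to the hypersurface $\mathcal{M}^G_1$, so $V$ restricts to a real analytic vector field on $\mathcal{M}^G_1$. Invoking the classical theorem that ODEs with a real analytic right-hand side admit solutions which depend analytically on both the time parameter and the initial condition, the flow of $V$ on $\mathcal{M}^G_1$ is real analytic. There is no serious obstacle: the only point meriting a line of verification is that the denominators appearing in the Ricci formula (powers of $\det g$) never vanish on $\mathcal{M}^G$, which is immediate from positive-definiteness of $g$.
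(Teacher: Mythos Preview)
Your proposal is correct and follows essentially the same line as the paper's proof: both recognize $\mathcal{M}^G_1$ as a real analytic submanifold of the finite-dimensional space $S^2(\Lm)^{\Ad(H)}$, observe that the Ricci endomorphism is a rational (hence real analytic) function of $g$ via the explicit algebraic formula, and conclude. Your write-up is slightly more detailed in that you spell out the tangency of $V$ to $\mathcal{M}^G_1$ and explicitly invoke the analytic-ODE theorem, whereas the paper leaves these standard points implicit and cites Proposition~1.5 in \cite{BWZ} for the rational Ricci formula.
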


\begin{proof}
The set  $\mathcal{M}^G_1$ is an algebraic subvariety of 
the Euclidean space  $\mathcal{M}^G$, and at the same time a smooth submanifold. Moreover, when
fixing a $Q$-orthonormal basis of $\Lm$ any $G$-homogeneous metric
can be considered an ${\rm Ad}(H)$-equivariant matrix.
Now the Ricci endomorphism $\Ric(g)$ of 
$g \in \mathcal{M}^G$ can be written down explicitly as in Proposition 1.5
in \cite{BWZ}. It is a rational map and consequently the same is
true for its traceless part $\Ric_0(g)$. This shows the claim.
\end{proof}

\begin{thm}\label{thm:comp}
Let $G/H$ be a compact homogeneous space.
Then any  non-collapsed homogeneous ancient solution has a unique compact 
asymptotic soliton, which is a homogeneous Einstein metric on $G/H$. 
\end{thm}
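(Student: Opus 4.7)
The plan is to lift the ancient solution to a backward-bounded trajectory of the volume-normalized Ricci flow on the finite-dimensional space $\mathcal{M}^G_1$, and then to exploit the fact that this is an analytic gradient flow of $\TS$ (Lemma \ref{lem:analyticflow}) to identify the asymptotic soliton with a unique $G$-homogeneous Einstein metric on $G/H$ via a Lojasiewicz-type argument.

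First, I would verify that the volume-normalized reparametrization $\bar g(\bar t) := V(t)^{-2/n} g(t)$, where $\bar t$ is chosen so that $\frac{d\bar t}{dt} = V(t)^{-2/n}$ and $V(t) := \vol(g(t))$, is defined for $\bar t \in (-\infty, \bar t_0]$. Using the bound $\scal(g(t)) \cdot |t| \leq n/2$ from the scalar curvature estimates of Section \ref{sec:appl}, together with $V'/V = -\scal$, one obtains $V(t) \leq C|t|^{n/2}$ as $t \to -\infty$, so that $V^{-2/n}$ decays at most like $|t|^{-1}$ and the change of variable is unbounded backward in time. Next I would show that the trajectory lies in a compact subset of $\mathcal{M}^G_1$. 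By Corollary \ref{main:cor} the Type I estimate holds in the past, and combined with non-collapsing the curvature-normalized metrics $\hat g(t) := \|\Rm(g(t))\|_{g(t)} g(t)$ satisfy $\|\Rm(\hat g(t))\|_{\hat g(t)} = 1$ and $\inj(\hat g(t)) \geq \iota_0 > 0$. Since $\bar g(\bar t)$ and $\hat g(t)$ lie on the same ray of the homothety action on $\mathcal{M}^G$, and the pinching between scale-invariant geometric quantities is thereby controlled, one obtains uniform two-sided bounds on the eigenvalues of $\bar g(\bar t)$, viewed via the identification of $\mathcal{M}^G_1$ with $\Ad(H)$-equivariant symmetric positive-definite endomorphisms of $\Lm$ of determinant one. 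This prevents escape to infinity in $\mathcal{M}^G_1$.

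Second, by Lemma \ref{lem:analyticflow} the volume-normalized flow is the analytic gradient flow of $\TS$ on the finite-dimensional analytic manifold $\mathcal{M}^G_1$, and $\TS(\bar g(\bar t))$ is non-decreasing in $\bar t$. The classical Lojasiewicz gradient inequality at each critical point of the analytic functional $\TS$ then applies to the backward-bounded trajectory and implies that as $\bar t \to -\infty$ the trajectory converges to a single limit $g_E \in \mathcal{M}^G_1$, which is a critical point of $\TS$, i.e., a $G$-homogeneous Einstein metric on the compact space $G/H$. In particular $g_E$ is compact.

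Third, I would identify $g_E$ with the asymptotic soliton. For any sequence $s_i \to \infty$, the blow-down $g_i(-1) := s_i^{-1} g(-s_i)$ lies in the same homothety ray as $\bar g(\bar t(-s_i))$, and the factor relating the two is bounded above and below by the Type I estimate; for a genuine Einstein ancient solution $g(t) = (1-2\lambda t) g_E$ one computes $g_i(-1) = (s_i^{-1} + 2\lambda) g_E \to 2\lambda g_E$, and in general this scaling is controlled by the convergence of scale-invariant scalar curvature values. Combined with $\bar t(-s_i) \to -\infty$ and $\bar g(\bar t(-s_i)) \to g_E$, this gives $g_i(-1) \to 2\lambda g_E$ smoothly, without subsequences. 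Hence the asymptotic soliton is the compact shrinking Einstein soliton $2\lambda g_E$ on $G/H$, independent of the sequence $s_i$, yielding both compactness and uniqueness. The main obstacle is step one: translating the lower injectivity radius bound on $\hat g(t)$ into concrete two-sided eigenvalue bounds on $\bar g(\bar t) \in \mathcal{M}^G_1$. Escape to infinity in $\mathcal{M}^G_1$ corresponds to some $\Ad(H)$-invariant subspace of $\Lm$ being scaled to become very small or very large, which for homogeneous metrics means the collapsing (or concentration) of a corresponding $G$-invariant distribution on $G/H$, contradicting the injectivity radius bound on $\hat g(t)$. Care is needed because $g_E$ is only determined up to the compact group of $G$-equivariant automorphisms of $G/H$ (via $N_G(H)/H$), so one may need to work modulo this compact symmetry.
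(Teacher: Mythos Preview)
Your overall architecture---pass to the volume-normalized flow on the finite-dimensional space $\mathcal{M}^G_1$, establish precompactness of the backward trajectory, and then use analyticity of the gradient flow (Lemma~\ref{lem:analyticflow}) together with the \L ojasiewicz inequality to obtain a unique Einstein limit---is exactly the paper's strategy. Your verification that $\bar t\to-\infty$ is a detail the paper omits, and your Step~3 makes explicit the identification of the volume-normalized limit with the blow-down asymptotic soliton, which the paper leaves implicit.

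The substantive gap, which you yourself flag, is the precompactness step. The paper does \emph{not} try to deduce eigenvalue bounds on $\bar g(\bar t)$ directly from the injectivity-radius hypothesis. Instead it argues: by \cite{Na}, \cite{CZ} the non-collapsed Type~I ancient solution has a non-flat asymptotic shrinking soliton, which by \cite{PW} is, up to cover, a product $E\times\R^k$ with $E$ compact homogeneous Einstein. If $k>0$ one gets $\vol(g_i(-1))\to\infty$ while $\scal(g_i(-1))\to c_\infty>0$, forcing $\scal(\bar g)\to\infty$ backward in time, contradicting monotonicity of $\TS$. Hence the soliton is compact and $\TS(\bar g(\tau))\geq\eps>0$ throughout. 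Backward subsequences are then Palais--Smale for $\TS$ in $\{\TS\geq\eps\}$, and the paper invokes Theorem~A of \cite{BWZ}---a nontrivial compactness theorem designed precisely for such sequences---to obtain subsequential convergence in $\mathcal{M}^G_1$.

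Your heuristic that escape to infinity in $\mathcal{M}^G_1$ means collapse of a $G$-invariant distribution, contradicting $\inj(\hat g)\geq\iota_0$, is not a proof: bounded curvature together with a lower injectivity-radius bound does not by itself control eigenvalue ratios of $G$-invariant metrics, and making this rigorous is essentially reproving a version of the compactness theorem in \cite{BWZ}. A route closer to your idea, avoiding \cite{Na}, \cite{CZ}, \cite{PW}, would be: since $\scal(\bar g)=\vol(\hat g)^{2/n}\scal(\hat g)$, monotonicity of $\TS$ bounds $\scal(\bar g)$ above while \eqref{eqn:estimateRmscal} gives $\scal(\hat g)\geq 1/C_2(n)$, so $\vol(\hat g)$ is bounded above; the lower bound follows from $\inj(\hat g)\geq\iota_0$, whence $\TS(\bar g)\geq\eps>0$ directly. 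But even then you still need Theorem~A of \cite{BWZ} for the final compactness in $\mathcal{M}^G_1$.
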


\begin{proof}
As mentioned above, by \cite{Na} and \cite{GZ}
for any sequence of blow-downs $(g_i(t))_{t \in (-\infty,0]}$ 
of a non-collapsed ancient solution $(g(t))_{t \in (-\infty,0]}$ there exists a non-flat 
asymptotic soliton to which they subconverge. 
Now by \cite{PW} we know, that up to a finite covering this asymptotic soliton is a product 
of a compact homogeneous Einstein space and a flat factor.
We have to exclude the flat factor. 

If there were a flat factor $\R^k$, then for large $i$ the volume of the metrics
$g_i(-1)$ would be unbounded, whereas $\scal(g_i(-1))\to c_\infty >0$
for $i \to \infty$. As a consequence, for the 
unit volume normalization $\bar g(t)$ of $g(t)$ the 
scalar curvature would be unbounded as $t\to -\infty$. 
This is a contradiction, since the scalar curvature is increasing along the volume-normalized Ricci 
flow on a compact homogeneous space.

Thus, the asymptotic soliton is compact. Next, we claim, that along
the backward volume normalized homogeneous Ricci flow the scalar curvature
of the solution $\bar g(\tau)$, $\tau = -t$, cannot converge to zero.
This is clear, since otherwise the asymptotic limit soliton would be flat.

It follows, that for any sequence $(\tau_i)_{ i \in \N}$
converging to $+\infty$, we will find a subsequence, such that
$\bar g(\tau_{i_j})$ is a Palais-Smale-sequence $C$ for 
$\TS$ in $\{\TS \geq \eps\}$ for some $\eps>0$. Then
by Theorem A in \cite{BWZ} there exists a subsequence, which 
converges to a homogeneous limit metric $g_\infty$ on $G/H$.

Uniqueness of the limit follows from Lemma \ref{lem:analytic}, since it is well-known that if the $\omega$-limit set of an analytic gradient flow solution is non-empty, then it consists of a single point, see for instance \cite{Loj}.
\end{proof}

\begin{rem}\label{rem_noncoll}
If a compact homogeneous space $G/H$, with $G,H$ connected, 
is not a homogeneous torus bundle, that is if there exists no compact 
intermediate subgroup $H<K<G$ such that $K/H$ is a torus,
then any ancient solution on $G/H$ is non-collapsed.
This is seen as follows. For a collapsed solution to the backward volume-normalized Ricci flow the scalar curvature must tend to zero. That means that on that space there exists a zero-Palais-Smale-sequence. Now by Theorem 2.1 in \cite{BWZ} the claim follows.
\end{rem}

Next, we show that for any unstable homogeneous Einstein metric there
are always ancient solutions emanating from it. Recall that a homogeneous Einstein metric is called \emph{unstable}, if it is not a local maximum of $\TS$.

Recall that the Milnor fibre of a critical point $g_E$ is defined as follows
$$
   F_+(g_E) := \{ g \in \mathcal{M}^G_1 : \TS(g)= \TS(g_E)+r^N\} \cap B_r(g_E)
$$
where $B_r(g_E)$ is a ball in $ \mathcal{M}^G_1$ of a very small radius $r>0$ and $N$ is large. Clearly, it is empty if and only if $g_E$ is a local maximum of $\TS$.

If an Einstein metric $g_E$ is non-degenerate and unstable, then
by the unstable manifold theorem there exists solutions to the
gradient flow of $\TS$ emanating from it. Since the scalar curvature is
still increasing along such flow lines, it follows that these solutions
have positive scalar curvature. Hence, by \cite{La} they are ancient.
By standard Morse theory the Milnor fibre is homotopy equivalent to $S^{k-1}$, 
$k$ being the dimension of the positive eigenspace of the Hessian of 
$\TS$ at the critical point $g_E$. 
In general however, the unstable Einstein metric $g_E$ might be degenerate
and even not isolated.

\begin{lem}\label{lem:ancient}
Let $g_E$ be a $G$-homogeneous unstable Einstein metric on a compact
homogeneous space $G/H$. Then there exists a $G$-homogeneous ancient
solution emanating from $g_E$. Moreover, the dimension of such solutions
can be estimated from the below by the cohomological dimension of the 
Milnor fibre of $\TS$ at $g_E$.
\end{lem}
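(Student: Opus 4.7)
The plan is to apply the {\L}ojasiewicz--Simon theory of analytic gradient flows to $\TS$ on $\mathcal{M}_1^G$ at $g_E$. By Lemma \ref{lem:analyticflow} the volume-normalized Ricci flow is the analytic gradient flow of $\TS$, so in a ball $B_r(g_E) \subset \mathcal{M}_1^G$ the {\L}ojasiewicz gradient inequality holds: there exist $\theta \in (0, 1/2]$ and $c > 0$ with
\[
  \bigl| \TS(g) - \TS(g_E) \bigr|^{1-\theta} \leq c \cdot \Vert \nabla \TS(g) \Vert_g
\]
for every $g \in B_r(g_E)$. The standard consequence is that any gradient orbit lying in $B_r(g_E)$ has finite length, and converges in both time directions to a critical point of $\TS$ at the corresponding level.

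First I would shrink $r > 0$ so that the only critical value of $\TS$ in $[\TS(g_E), \TS(g_E) + r^N]$ attained inside $B_r(g_E)$ is $\TS(g_E)$, using the discreteness of critical values of a real-analytic function. Since $g_E$ is unstable the Milnor fibre $F_+(g_E)$ is non-empty. For any $g_0 \in F_+(g_E)$ the backward volume-normalized Ricci flow $\phi_t(g_0)$ decreases $\TS$; by the {\L}ojasiewicz inequality the orbit has finite length as long as it stays in $B_r(g_E)$, and hence converges as $t \to -\infty$ to some critical point at level $\TS(g_E)$. Under the standard parabolic reparametrization (as in the discussion preceding Lemma \ref{lem:analyticflow}) this produces an ancient homogeneous solution of the unnormalized Ricci flow emanating from a critical point sharing the scalar curvature of $g_E$.

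To upgrade ``some critical point'' to $g_E$ itself and to obtain the dimensional lower bound, I would introduce the local unstable set
\[
  W^u(g_E) := \bigl\{ g \in B_r(g_E) : \phi_t(g) \in B_r(g_E) \text{ for all } t \leq 0,\ \lim_{t\to-\infty} \phi_t(g) = g_E \bigr\},
\]
and observe that every $g \in W^u(g_E) \setminus \{g_E\}$ determines, up to time-translation, a $G$-homogeneous ancient Ricci flow solution emanating from $g_E$. The dimension estimate then reduces to the topological claim
\[
  \dim \bigl( W^u(g_E) \cap \{\TS = \TS(g_E) + r^N\} \bigr) \geq \operatorname{cd}(F_+(g_E)).
\]
My approach is to consider the asymptotic endpoint map $E(g) := \lim_{t\to-\infty} \phi_t(g)$, which by the {\L}ojasiewicz inequality is well-defined and continuous on $B_r(g_E) \cap \{\TS > \TS(g_E)\}$ with image in the critical set of $\TS$ at level $\TS(g_E)$. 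The restriction of $E$ to the Milnor fibre $F_+(g_E)$ has fibre $E^{-1}(g_E) \cap F_+(g_E)$ precisely equal to the slice of $W^u(g_E)$ at that level, and the plan is to show that the competing limit critical points form a subset of $F_+(g_E)$ of strictly smaller cohomological dimension, so that the fibre over $g_E$ inherits the full $\operatorname{cd}(F_+(g_E))$.

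The main obstacle is this final cohomological comparison. Without non-degeneracy of $g_E$ one cannot invoke a classical stable/unstable manifold theorem, and the critical set of $\TS$ through $g_E$ may be positive-dimensional. The argument therefore requires the full {\L}ojasiewicz--Simon framework together with a delicate Alexander-duality or analytic-cell-decomposition argument (in the spirit of Kurdyka--Mostowski--Parusi{\'n}ski) to control the set of ``other'' possible limits and deduce that it cannot absorb the topology of $F_+(g_E)$. Once this is established, the lemma follows by interpreting the elements of the resulting family of backward orbits as the promised $G$-homogeneous ancient solutions.
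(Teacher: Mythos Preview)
The paper's proof is a one-line citation of Nowel--Szafraniec \cite{NS}: for an analytic gradient vector field, if the Milnor fibre $F_+(g_E)$ is non-empty then there exists a trajectory of the negative gradient flow whose $\omega$-limit is \emph{exactly} $g_E$, and moreover the \v{C}ech--Alexander cohomology of the set of all such trajectories agrees with that of $F_+(g_E)$. Both assertions of the lemma follow at once (positivity of scalar curvature along the orbit, together with \cite{La}, then gives that the corresponding unnormalized solution is ancient).

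Your proposal is essentially an attempt to reprove the result of \cite{NS} from scratch. You set up the {\L}ojasiewicz framework correctly, but you leave precisely the two non-trivial points unfinished. First, your backward-flow argument only yields convergence to \emph{some} critical point at level $\TS(g_E)$, not to $g_E$ itself; since the critical set through $g_E$ may well be positive-dimensional (no non-degeneracy is assumed), this is a genuine gap, as you acknowledge. Your proposed endpoint map $E$ and the Alexander-duality idea are in the right spirit, but making this rigorous is exactly the content of \cite{NS} (which indeed uses analytic stratification techniques of the Kurdyka--Mostowski--Parusi\'nski type). Second, the cohomological dimension estimate is only sketched and rests on the same unproved comparison. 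The clean fix is not to reprove \cite{NS} but to cite it, as the paper does.
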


\begin{proof}
This follows from  \cite{NS}. It is shown there,
that if the Milnor fibre is not empty, then there exists a solution 
to the negative gradient flow of $\TS$ with omega limit set $g_E$.
Moreover, the authors show that the Cech-Alexander cohomology groups 
of the set of such solutions and the Milnor fibre agree.
\end{proof}

We should point out that in the above lemma we do not claim that all these
ancient solutions are pairwise non-isometric.

\begin{rem}
An Einstein metric $g_E$ on a compact manifold is the Yamabe metric
in its conformal class. Moreover, by Theorem C in \cite{BWZ} any nearby metric
with constant scalar curvature is a  Yamabe metric as well, provided $g_E$
is not in the conformal class of the round metric. It follows, that
if a homogeneous Einstein metric is unstable, then it is not
a local maximum of the  Yamabe functional. Hence, by Theorem 1.6 in \cite{Kr} 
there exists an ancient solution emanating from it. It may be possible to
adjust the proof in \cite{Kr} to show that this ancient solution can also
be chosen to be homogeneous.

Let us also mention that there are general existence results for unstable homogeneous Einstein metrics on
compact homogeneous spaces
relying on min-max principles \cite{BWZ}, \cite{Bo2}, \cite{Gr}, to which
Lemma \ref{lem:ancient} can be applied. For instance, if a compact homogeneous space $G/H$ satisfies a certain algebraic property (its graph having two non-toral components, see \cite{BWZ} for
details), then there exists $C>0$ such that  $\TS^{-1}([C,\infty))$ is disconnected.
Moreover,  there is also a smooth path
of metrics in $\{0<\TS \leq C\}$ joining two connected components of
$\TS^{-1}([C,\infty))$.
By a standard mountain pass lemma the existence of a critical point of
$\TS$ follows: see Proposition 3.7 in \cite{BWZ}.
It also follows that there must exist a
critical point which is not
a local maximum of $\TS$, since by Proposition 1.5 in \cite{BWZ} the set
of all critical points of $\TS$ is a disjoint union of
finitely many compact, connected, semialgebraic sets on each one of
which $\TS$ is constant.


\end{rem}


\section{Examples of collapsed ancient solutions}\label{sec:exancient}

For collapsed homogeneous ancient solitons there will of course be no asymptotic gradient shrinking soliton in the category of smooth manifolds.
However, when working with Riemannian groupoids, as introduced by Lott in \cite{Lo2}, 
one might hope to prove the existence of a locally homogeneous asymptotic gradient shrinking
soliton. That means that after considering the blow-downs of a collapsed ancient solution, 
we pull back these metrics to a ball in the tangent space.
Due to the curvature estimates provided by Theorem \ref{main:long} these balls can
be chosen to have a uniform radius. The convergence in the category of Riemannian
groupoids then only means that these locally homogeneous metrics
converge in $C^\infty$-topology to a locally homogeneous limit metric on this fixed ball
(cf.~ Section \ref{sec:lochom}). In other words, one considers the convergence of the corresponding \emph{geometric models}, defined in Section \ref{sec:lochom}. Now an asymptotic soliton, 
would be a locally homogeneous product metric of an Einstein metric with positive Ricci 
curvature and a flat factor. In the
collapsed case the flat factor cannot be absent.
Notice, that if a collapsed ancient homogeneous solution
does admit a locally homogeneous asymptotic soliton,  it must be non-flat
by the curvature estimates provided in Corollary \ref{main:cor}. 

We would like to mention that by \cite{Rod} there exist locally homogeneous Einstein metrics 
on $(S^3\times S^3)/S^1_r$ of positive scalar curvature, which by \cite{Kow}
cannot be extended to globally homogeneous compact Einstein spaces  (cf.~\cite{BWZ}, p.~725).
Here $S^1_r$ is embedded into the maximal torus of $S^3\times S^3$ with
irrational slope $r$. Whether such locally homogeneous spaces occur as compact 
factors in asymptotic solitons is unknown.

In order to provide an example of a collapsed ancient solution with non-compact
singularity model, we cannot work with homogeneous spaces whose isotropy representation 
admits only two summands: see \cite{DK} for a classification. 
Instead, we are looking for homogeneous spaces
$G/H$ not admitting any $G$-invariant Einstein metric, to prevent a compact
singularity model, but which in addition are homogeneous $S^1$-bundles.

We recall, how to compute the Ricci curvature of a compact homogeneous space
$G/H$. As mentioned above, every $G$-invariant metric on $G/H$ is uniquely determined by an
$Ad(H)$-invariant scalar product on $\Lm$. It follows that
for any $G$-in\-variant metric $g$ on $G/H$ there exists a decomposition
$\Lm=\Lm_1 \oplus \dots \oplus \Lm_\ell$ of $\Lm$ into $Ad(H)$-irreducible summands, 
such that $g$ is diagonal with respect to $Q$, that is
\begin{eqnarray*}
  g = x_1 \cdot Q |_{\Lm_1}\perp \dots \perp x_\ell \cdot Q|_{\Lm_\ell}
\end{eqnarray*}
with $x_1,...,x_\ell>0$. By \cite{WZ2}, \cite{PaSa}, the diagonal entries $r_m$
of the Ricci endomorphism $\Ric(g)$ of $g$ are given by
\begin{eqnarray}
   r_m & =&
   \frac{b_m}{2x_m}
       -\frac{1}{2d_m}\sum_{j,k=1}^\ell \jkm \frac{x_k}{x_m x_j}
       +\frac{1}{4d_m}\sum_{j,k=1}^\ell \jkm \frac{x_m}{x_j x_k}\,.
       \label{ricci}
\end{eqnarray}
Here, $-B|_{\Lm_m} = b_m\cdot Q|_{\Lm_m}$
and $d_m = \dim \Lm_m$, where $B$ denotes the Killing form of $\Lg$. The structure
constants $[ijk]$ with respect to the above decomposition of $\Lm$ are
defined as follows:
\begin{eqnarray*}
  [ijk] = \sum Q([e_{\alpha},e_{\beta}],
   e_{\gamma})^2
\end{eqnarray*}
where the sum is taken
over $\{e_{\alpha}\}$, $\{e_{\beta}\}$, and
$\{e_{\gamma}\}$,
$Q$-orthonormal bases for $\Lm_i$, $\Lm_j$ and $\Lm_k$, respectively.
Notice that $[ijk]$ is invariant under permutation of $i,j,k$.

\begin{exa}\label{exa:4}
On $G/H=({\rm SU}(n){\rm SU}(n))/ (\Delta {\rm SU}(n-1)\Delta {\rm U}(1))  $
there exist for \mbox{$n\geq 3$} a one-parameter family of homogeneous ancient solutions
with the same non-compact asymptotic soliton $(E_-,g_-^1) \times \R$, where
$$
   E_-=({\rm SU}(n){\rm SU}(n))/(\Delta({\rm SU}(n-1)){\rm U}(1){\rm U}(1))\,,
$$
and the same non-compact singularity model $(E_+,g_+)\times  \R^{4(n-1)+1}$, where
$$
  E_+=({\rm SU}(n-1){\rm SU}(n-1))/\Delta {\rm SU}(n-1)\,.
$$
The Einstein metric $g_-^1$ on $E_-$ is the unstable homogeneous Einstein metric on $E_-$
and $(E_+,g_+)$ is a compact symmetric space.
Furthermore, there exists one further homogeneous ancient solution in the closure of
the above family with the asymptotic soliton $(E_-,g_-^2)$ and the same singularity model.
Here $g_-^2$ is now the stable homogeneous Einstein metric on $E_-$.
Finally,
all the above ancient solutions have positive Ricci curvature.
\end{exa}

\begin{proof}
We set $G=G_1G_2$, with $G_1=G_2={\rm SU}(n)$ 
and $H=\Delta {\rm SU}(n-1)\Delta {\rm U}(1)$ for $n \geq 3$. Here
$H$ is embedded into $G$ as follows:
Consider the subgroup ${\rm SU}(n-1)$
of ${\rm SU}(n)$, embedded as an upper $(n-1)\times (n-1)$-block.
 Then the semisimple part of $H$ is embedded diagonally in ${\rm SU}(n-1){\rm SU}(n-1) \subset G$.
The subgroup ${\rm SU}(n-1)$ commutes with its centralizer ${\rm U}(1)$
in ${\rm SU}(n)$. This ${\rm U}(1)$ is embedded diagonally into ${\rm SU}(n)$,
the first $(n-1)$ diagonal entries being equal. Now
$\Delta{\rm U}(1)$ is embedded diagonally into the product ${\rm U}(1) {\rm U}(1)$.

We choose the ${\rm Ad}(G)$-invariant scalar product
$Q(X,Y)=\frac{1}{2} \tr (X \cdot Y^*)$ on $\Lg$.
 Then $b_m=4n$ for all $m$ (see \cite{WZ1}, p.~583).
Next, let $\Lm'$ denote the orthogonal complement of
$\un(n-1)\oplus \un(1)$ in $\un(n)$, considered
as a subspace in the first factor $\su_1(n)$ and let 
$\Lm''$ be defined accordingly. We set $\Lm_1=\Lm'\oplus \Lm''$ and conclude
$d_1=\dim \Lm_1=4(n-1)$.
The space $\Lm_2$ is the orthogonal complement of $\Delta {\su}(n-1)$
in $\su_1(n-1)\oplus \su_2(n-1)$, thus $d_2=n(n-2)$.
Finally, $\Lm_3$ is the orthogonal complement 
of $\Delta {\un}(1)$ in $\un_1(1)\times \un_1(1)$, hence $d_3=1$. 

The group $G=G_1 G_2$ admit the involution
$f(g_1,g_2)= (g_2,g_1)$. Clearly, we have $f(H)=H$.
We set now $\hat G=  \Z_2 \ltimes G$ and $\hat H=\Z_2 \ltimes H$. 
Then $G/H=\hat G/\hat H$ as manifolds.
Moreover, the modules $\Lm_1$, $\Lm_2$ and $\Lm_3$ are
now $\Ad(\hat H)$-irreducible and of course inequivalent, since there dimensions
are different.

We will consider the three-parameter family of homogeneous metrics
$$
   g= x_1 \cdot Q\vert_{\Lm_1} \perp  
        x_2 \cdot Q\vert_{\Lm_2} \perp 
        x_3 \cdot Q\vert_{\Lm_3}
$$      
and compute their Ricci curvatures for $x_1,x_2,x_3>0$.

The only non vanishing structure constants are $[112]$ and $[113]$.
To this end notice that ${\rm U}(n)/({\rm U}(n-1){\rm U}(1))$ is a symmetric space,
hence $[111]=0$. Next, $\Lm_2 \oplus \Lm_3 \oplus \Lh$ is a subalgebra, and therefore
$[122]=[123]=[133]=0$. Since $({\rm SU}(n-1){\rm SU}(n-1))/\Delta({\rm SU}(n-1)$
is also a symmetric pair, we have $[222]=0$. Moreover,  $[333]=0$,
since $\Lm_3$ is an abelian subalgebra. Finally $[\Lm_2,\Lm_3]=0$. 

We have that $4n=[311]$ by the identity $d_3b_3=\sum_{i,j=1}^3[3ij]$ from Lemma 1.5 in \cite{WZ2}, 
since $d_3=1$. Next, we claim that $[211]=4d_2$. To this end we choose a standard orthonormal basis
of $\Lm_1$ consisting of $2(n-1)$ skew-symmetric elements with only two non-vanishing
entries $\pm 1$ and $2(n-1)$ symmetric elements with two non-vanishing entries $i$. 
For each of these basis vectors $e$ there
exist precisely $1+2(n-2)$ other basis elements not commuting with $e$. The first special
basis element $e^*$ has its non-vanishing entries at the same spot as $e$ does.
The Lie-bracket $[e,e^*]$ is diagonal and has two non-vanishing entries $\pm 2i$.
When computing the projection of $[e,e^*]$ onto $\su(n-1)$ one deduces that
$\Vert [e,e^*]_{\su(n-1)}\Vert^2 =2\frac{n-2}{n-1}$. The computation of the other
$2(n-2)$ non-vanishing brackets is standard and we obtain
$$
   [211]=4(n-1)\cdot \big( 2\tfrac{n-2}{n-1} +2(n-2) \big) \cdot \tfrac{1}{2} =4(n-2)n=4d_2\,,
$$
noticing that all the brackets in question had to be projected to the diagonally
embedded $\Delta \su(n-1)$ in $\su(n-1)\oplus \su(n-1)$.

Since the three modules $\Lm_1, \Lm_2$ and $\Lm_3$ are inequivalent,
it follows from Schur's Lemma that the Ricci tensor of the metric $g=g(x_1,x_2,x_3)$
is diagonal as well and by (\ref{ricci}) we deduce
\begin{eqnarray}
  r_1   &=& 
                   \tfrac{2n}{x_1}-\tfrac{n(n-2)}{2(n-1)}\cdot \tfrac{x_2}{x_1^2}
                   -\tfrac{n}{2(n-1)}\cdot \tfrac{x_3}{x_1^2} \label{eqn1}\\
r_2 &=&
                 \tfrac{2(n-1)}{x_2}+ \tfrac{x_2}{x_1^2} \label{eqn2}\\
r_3 &=&
            n \cdot \tfrac{x_3}{x_1^2} \label{eqn3}
\end{eqnarray}
Recall that the Ricci flow on the homogeneous space
$\hat G/\hat H$ is given by $x_i' =-2x_i\cdot r_i$, $i=1,2,3$.
Furthermore, we can reduce dimension by one considering
the volume normalized Ricci flow $x_i'=-2x_i \cdot r_i^0$,
where $r_i^0 =r_i -\frac{1}{N}\cdot \scal$ denotes the entries
of the traceless part of the Ricci endomorphism, $N =\dim \hat G/\hat H$.
 To understand this normalized
Ricci flow it is convenient to introduce new coordinates
$\alpha=\tfrac{x_2}{x_1}$ and $\beta=\tfrac{x_3}{x_1}$. We have
that 
$\tfrac{\alpha'}{\alpha}=\tfrac{x_2'}{x_2}-\tfrac{x_1'}{x_1}=2(r_1-r_2)$
and similarly $ \tfrac{\beta'}{\beta}=2(r_1-r_3)$. The volume constraint $x_1^{d_1}x_2^{d_2}x_3=1$
reads in these new coordinates $(\alpha^{d_2}\cdot \beta)^{\frac{1}{N}}=\frac{1}{x_1}$. 
\begin{figure}[b]
\begin{center}
\includegraphics[scale=0.6]{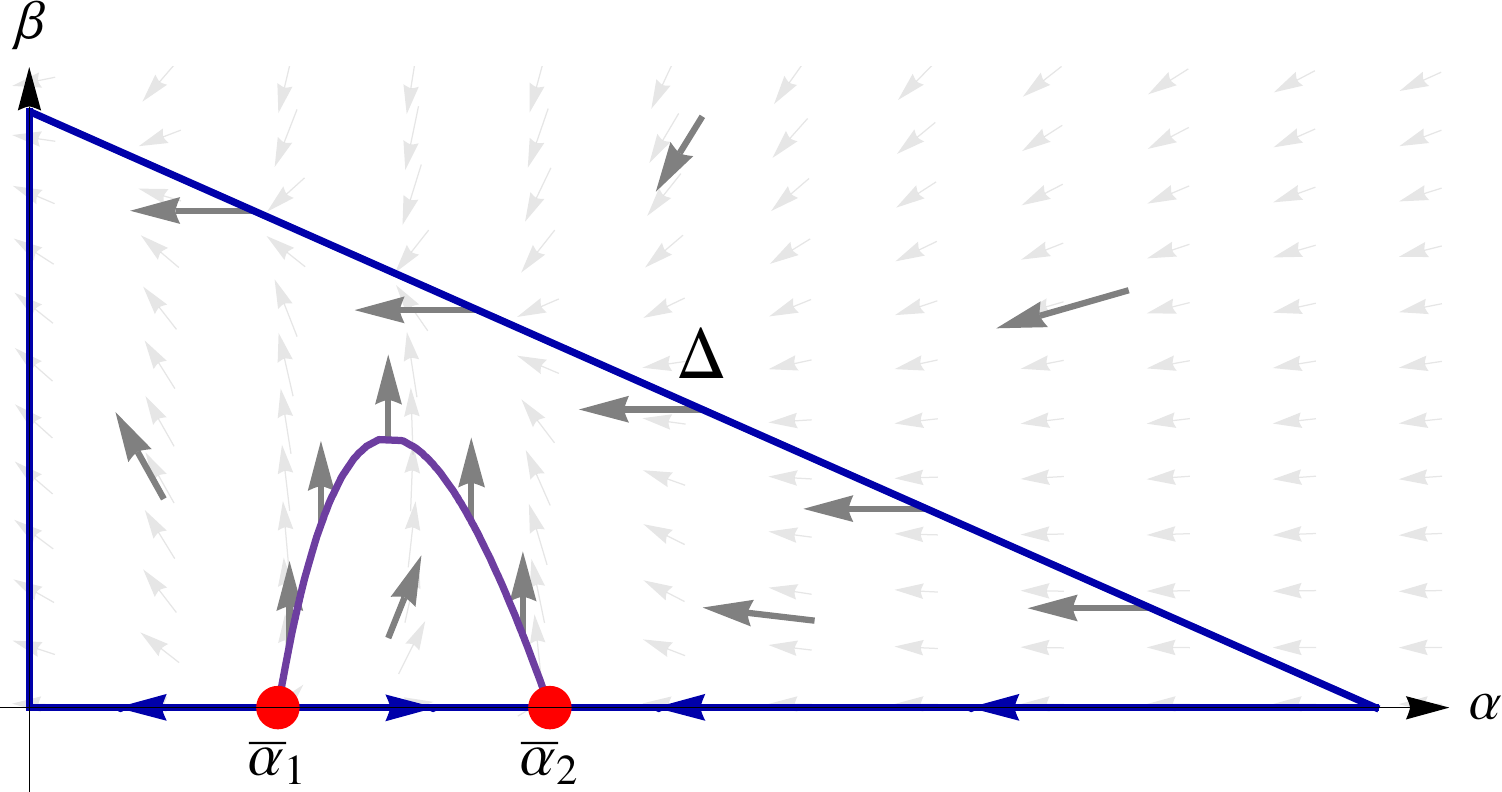}
\end{center}
\caption{Volume normalized Ricci flow on $\hat G/\hat H$ for $n=4$}
\label{pic:flowlines}
\end{figure}
We deduce
that the volume normalized Ricci flow is equivalent to 
\begin{eqnarray}
  \alpha' &=&
    p(\alpha,\beta)\cdot \Big( 4(n-1) -\tfrac{n^2-2}{n}\cdot \alpha
   -\tfrac{4(n-1)^2}{n}\cdot \tfrac{1}{\alpha}                                           -\beta\Big)\label{eqn4}\\
  \beta' &=&
     q(\alpha,\beta) \cdot \Big( 4(n-1) -(n-2)\cdot  \alpha
   -(2n-1)\cdot \beta
      \Big)\label{eqn5}\,,
\end{eqnarray}
where 
 $ p(\alpha,\beta)=\alpha \cdot \frac{n}{n-1}\cdot \alpha^{\tfrac{d_2}{N}}\cdot \beta^{\frac{1}{N}}$
 and
 $q(\alpha,\beta)=\beta \cdot \frac{n}{n-1}\cdot \alpha^{\tfrac{d_2}{N}}\cdot  \beta^{\frac{1}{N}}$.
Now, notice that if
we divide the right hand side of the system (\ref{eqn4}) and (\ref{eqn5}) by the positive function $\tfrac{n}{n-1} \cdot \alpha^{\tfrac{d_2}{N}}\cdot \beta^{\frac{1}{N}}$ only
the time-parametrization of solutions does change, but the integral curves do not.
Consequently, the volume normalized Ricci flow of $\hat G/\hat H$ is a up to time
reparametrization equivalent to
\begin{eqnarray}
  \alpha' &=&
     -\frac{4(n-1)^2}{n}+ 4(n-1)\cdot \alpha -\frac{n^2-2}{n}\cdot \alpha^2
     -\alpha \cdot \beta\label{eqn6}\\
  \beta' &=&
     \beta \cdot \big( 4(n-1) -(n-2)\cdot  \alpha
   -(2n-1)\cdot \beta
      \big)\label{eqn7}\,,
\end{eqnarray}
restricted to the domain $\{\alpha,\beta > 0\}$.

We turn to the qualitative behavior of this system, however on the slightly
larger domain $D:=\{\alpha > 0\} \cup \{ \beta \geq 0\}$ (cf.~Figure \ref{pic:flowlines}).
First notice,
that the positive $\alpha$-axis in invariant under this system.
In fact this restriction is up to reparametrization
nothing but the volume normalized Ricci flow 
on $E_-$. Moreover,
the above system admits precisely two constant solutions $(\bar \alpha_1,0)$
and $(\bar \alpha_2,0)$ with
$$
  \bar \alpha_1 =\tfrac{2(n-1)}{n+\sqrt{2}} \quad \textrm{and}\quad
   \bar \alpha_2 =\tfrac{2(n-1)}{n-\sqrt{2}}\,.
$$
To this end, for $\alpha>0$ the condition $\alpha'=0$ 
implies 
$$
 \beta =  -\tfrac{4(n-1)^2}{n}\cdot \tfrac1{\alpha}+ 4(n-1) -\tfrac{n^2-2}{n}\cdot \alpha\,.
$$
Plugging this into (\ref{eqn7}) yields a quadratic equation for $\alpha$, which does not
have real solutions. This shows in particular
that the space $\hat G/\hat H$ does not admit a $\hat G$-invariant Einstein metric.

Next, we consider the right hand side of the above system as a smooth vector field $X$.
Its differential $(DX)_{(\bar \alpha_i,0)}$, $i=1,2$, is upper triangular,
with eigenvalues
$$
 \lambda_1^i =(-1)^{i+1} \cdot \tfrac{4(n-1)\sqrt{2}}{n}
 \quad\textrm{and}\quad
  \lambda_2^i=4(n-1)-(n-2)\cdot \bar \alpha_i>0\,.
$$ 
This shows, that $(\bar \alpha_1,0)$ is a node, while $(\bar \alpha_2,0)$ is a saddle point, 
whose unstable manifold intersects the $\alpha$-axis transversally.

The  set $\{\beta ' \geq 0\} \cup \{\beta >0\}$ is a right triangle $\Delta$,
depicted in \mbox{Figure \ref{pic:flowlines}}.
Actually the hypotenuse belongs to $\Delta$, but the two other sides do not.
A computation shows now, that $X$ intersects its hypotenuse transversally pointing into its interior. 
It follows that a maximal solution in $D$, starting in $\Delta$, 
cannot leave $\Delta$.  Moreover, since in $\Delta$ we have
$r_1\geq r_3$ and since $r_2,r_3>0$, we conclude that any metric in  $\Delta$
has positive Ricci curvature.

As we saw above, $(\bar \alpha_1,0)$ is a node. Therefore, there exist a one-parameter family
of solutions in $D$, which emanate from it. Since these solutions cannot leave
$\Delta$, they have positive Ricci curvature, hence they are ancient  by \cite{La}.
Clearly, the compact factor of the asymptotic soliton of these ancient solutions 
is the unstable Einstein metric $g_-^1$ of $E_-$.

The second ancient solution is given by the unstable manifold of the
stable Einstein metric $(\bar \alpha_2,0)$ of the compact factor $E_-$.
It lies in the closure of the above one-parameter family of solutions,
but has an ancient soliton,  whose compact factor is isometric to $g_-^2$.
It is not hard to show, using the Ricci curvature formulas (\ref{eqn1}), (\ref{eqn2})
and (\ref{eqn3}),  that these solutions cannot be isometric.

For all these ancient solutions $(\alpha(t),\beta(t))_{t \in (-\infty,T)}\in D$
it remains to compute their singularity model. Let us mention at this point, that
they reach the $\beta$-axis. However this point does not correspond to a 
Riemannian metric on $\hat G/\hat H$. It is clear that
$\lim_{t \to T}= \alpha(t)=\bar \alpha=0$, while $\lim_{t\to T}\beta(t)=\bar \beta>0$.
We rescale now these metrics such that $x_2(t) \equiv 1$. Since 
$\bar \alpha=0$, we deduce $\lim_{t \to T} x_1(t)=\infty$ and since $\bar \beta >0$
it follows that also $\lim_{t \to T} x_3(t)=\infty$. As a consequence these metrics converge
to a limit product metric on the singularity model $E_+ \times \R^{4(n-1)+1}$.
Since $E_+$ is isotropy irreducible, the limit metric on $E_+$ is Einstein.
\end{proof}


\section{Bounds on the gap}\label{sec:ex}

According to the decomposition of the space of curvature operators into irreducible $\Or(n)$-modules 
in dimensions $n\geq 4$, the operator $\Rm$ decomposes as 
\[
  \Rm = \Rm_I + \Rm_{\Ric_0} + \Weyl\,,
\]
where $\Rm_I = \frac{\scal}{n (n-1)} \id \wedge \id $, $\Rm_{\Ric_0} = \frac2{n-2} \Ric_0 \wedge \id$, $\Ric_0 = \Ric - \frac{\scal}{n}  \id$ is the traceless Ricci tensor and $\Weyl$ is the Weyl tensor. A standard computation shows that
\begin{equation}\label{eqn:normcurvop}
    \left\Vert \Rm_I \right\Vert^2 = \frac{\scal^2}{2n(n-1)} 
    \quad\textrm{and}\quad
     \left\Vert \Rm_{\Ric_0}\right\Vert^2 = \frac{\Vert \Ric_0 \Vert^2}{n-2} .
\end{equation}
The aim of this section is to prove the following 

\begin{lem}\label{lem:gapboud}
For $n\geq 4$ there exist homogeneous spaces $(M^n,g)$ such that
\[
  \Vert \Weyl(g) \Vert_g \geq \sqrt{\tfrac{n-2}{n-3}}\cdot \Vert \Rm(g)\Vert_g\,.
\]
\end{lem}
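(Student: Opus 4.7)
The plan is to exhibit, for each $n\geq 4$, a simple explicit example: set $m := \lfloor n/2\rfloor$ and take
\[
  M^n = (\mathbb{H}^2)^m \times \R^{n-2m}
\]
with the product metric, where each $\mathbb{H}^2$ has constant sectional curvature $-1$ and $\R^{n-2m}$ is flat. This is manifestly homogeneous, and it is realized as a solvable Lie group with a left-invariant metric (each $\mathbb{H}^2\cong \R\ltimes\R$ is solvable, the flat factor is abelian).

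The rest is a direct calculation. The Riemann tensor is the orthogonal sum of the $m$ hyperbolic-plane curvatures (extended by zero to the flat directions). In the normalization determined by \eqref{eqn:normcurvop}, a single $\mathbb{H}^2$ contributes $\scal^2/(2\cdot 2\cdot 1) = 1$ to $\Vert \Rm\Vert^2$, so $\Vert \Rm(g)\Vert_g^2 = m$. The Ricci tensor has eigenvalue $-1$ on each of the $2m$ hyperbolic directions and $0$ on each of the $n-2m$ flat ones, giving $\scal(g)=-2m$ and, by a one-line computation, $\Vert \Ric_0(g)\Vert_g^2 = \tfrac{2m(n-2m)}{n}$. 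Substituting into \eqref{eqn:normcurvop},
\[
  \Vert \Rm_I\Vert_g^2 + \Vert \Rm_{\Ric_0}\Vert_g^2 \;=\; \frac{2m^2}{n(n-1)} + \frac{2m(n-2m)}{n(n-2)} \;=\; \frac{2m(n-1-m)}{(n-1)(n-2)}.
\]

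For $m=\lfloor n/2\rfloor$ one has $2(n-1-m)\leq n-1$, so this sum is bounded by $\tfrac{m}{n-2} = \tfrac{\Vert\Rm(g)\Vert_g^2}{n-2}$, with equality when $n$ is odd. By the orthogonal decomposition $\Vert\Rm\Vert_g^2 = \Vert\Rm_I\Vert_g^2 + \Vert\Rm_{\Ric_0}\Vert_g^2 + \Vert\Weyl\Vert_g^2$ this rearranges into
\[
  \Vert \Weyl(g)\Vert_g^2 \;\geq\; \frac{n-3}{n-2}\cdot \Vert \Rm(g)\Vert_g^2,
\]
which, after taking square roots, is the intended inequality; the ratio under the square root in the current statement of the lemma evidently should be $\sqrt{(n-3)/(n-2)}$ rather than $\sqrt{(n-2)/(n-3)}>1$, since the latter is incompatible with the elementary bound $\Vert \Weyl\Vert\leq \Vert \Rm\Vert$ that follows from the orthogonal decomposition. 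Consequently, the gap constant of Theorem \ref{main:gap} satisfies $\epsilon(n)\leq 1-\sqrt{(n-3)/(n-2)}\to 0$ as $n\to\infty$, confirming the claim made in the introduction.

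No serious obstacle arises in the argument. The only design choice is to take $m$ just large enough that $2m\geq n-1$, thereby diluting the scalar and traceless-Ricci contributions to $\Vert\Rm\Vert_g^2$ by the dimension; the selection $m=\lfloor n/2\rfloor$ is the optimal one in this family and in fact saturates the desired bound for odd $n$ (for even $n$ the bound is strict, giving even $\Vert\Weyl\Vert_g^2 = \tfrac{n-2}{n-1}\Vert\Rm\Vert_g^2$).
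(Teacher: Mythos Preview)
Your argument is correct, and you are right that the stated inequality is a typo: since $\Vert \Weyl\Vert \leq \Vert \Rm\Vert$ always holds by orthogonality of the decomposition, a ratio exceeding $1$ is impossible. Your product-of-hyperbolic-planes construction yields
\[
\Vert \Rm_I\Vert^2 + \Vert \Rm_{\Ric_0}\Vert^2 \;\leq\; \tfrac{1}{n-2}\,\Vert \Rm\Vert^2,
\qquad\text{i.e.}\qquad
\Vert \Weyl\Vert \;\geq\; \sqrt{\tfrac{n-3}{n-2}}\,\Vert \Rm\Vert,
\]
with equality for odd $n$; the computations of $\Vert \Rm\Vert^2$, $\scal$, and $\Vert \Ric_0\Vert^2$ are all correct in the paper's normalization \eqref{eqn:normcurvop}.

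The paper takes a genuinely different route. It works inside a single family of \emph{almost abelian} solvable Lie groups: $\Lg = \R e_0 \ltimes \R^{n-1}$ with bracket $[e_0,\cdot] = A$, where $A$ is symmetric, traceless, and chosen with eigenvalues $(n-2,-1,\ldots,-1)$. Using closed formulas for $\Rm$ and $\Ric$ on such groups, the paper obtains
\[
\Vert \Rm_I\Vert^2 + \Vert \Rm_{\Ric_0}\Vert^2 \;=\; \tfrac12 (n-1)(n-2)(2n-3) \;\leq\; \tfrac{1}{n-3}\,\Vert \Rm\Vert^2,
\]
hence $\Vert \Weyl\Vert \geq \sqrt{(n-4)/(n-3)}\,\Vert \Rm\Vert$. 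So the constant you guessed for the corrected statement, $\sqrt{(n-3)/(n-2)}$, is actually \emph{sharper} than what the paper's own example delivers; in particular the paper's intended statement (matching its proof) would carry $\sqrt{(n-4)/(n-3)}$ rather than your constant. Either bound suffices for the purpose of the lemma, namely showing $\epsilon(n)\to 0$ in the Gap Theorem. Your construction is more elementary --- no Lie-theoretic curvature formulas are needed, only the block-diagonal structure of $\Rm$ on a Riemannian product --- while the paper's construction lives in a single continuous family of metrics, which is sometimes convenient for refinements. Note also that your examples for $m\geq 2$ are solvable but not almost abelian, so the two families are genuinely distinct.
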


\begin{proof}
An $n$-dimensional real Lie algebra $\Lg$ is called \emph{almost abelian} if it admits a codimension-one abelian ideal $\Ln$. In other words, there exist a basis $\{e_i\}_{i=0}^{n-1}$ for $\Lg$ and an endomorphism $A\in \gl_{n-1}(\R)$ such that the Lie bracket is given by
\[
    [e_0, e_i] = -[e_i,e_0] =  A \, e_i, \qquad [e_i,e_j] = 0, \qquad i,j \neq 0.
\]
 Let us consider an inner product
$\langle \, \cdot,\cdot \rangle$ on $\Lg$ that makes $\{e_i\}$ orthonormal, and denote by $(S_A,G)$ the corresponding simply-connected Lie group with left-invariant Riemannian metric. We denote by 
$D = \unm \left( A+ A^t\right) $ and $Q = \unm\left( A-A^t\right)$
the symmetric and skew-symmetric parts of $A$, respectively.
It follows from \cite{Meu} that the curvature operator 
$\Rm : \Lambda^2 \Lg \to \Lambda^2 \Lg$
is given by
\begin{align*}
    \Rm\left( e_0 \wedge e_i \right) &= - e_0 \wedge \left(D^2 + [D,Q] \right) e_i, \qquad i \neq 0, \\
    \Rm\left( e_i \wedge e_j \right) &= D e_j \wedge D e_i, \qquad i,j \neq 0\,,
\end{align*}
the Ricci endomorphism $\Ric : \Lg \to \Lg$ by
\begin{align*}
 	\Ric(e_0) = -\left(\trace D^2\right) e_0,\quad
	\Ric(e_i) = \left([Q,D] - \left( \trace D\right) D\right) e_i, \quad i\neq 0
\end{align*}
and the scalar curvature by $\scal = - \trace D^2 - \left(\trace D\right)^2$.
If in particular one takes $Q = 0$ and $D$ such that $\trace D = 0$, then 
\begin{align*}\label{ineqRicscal}
	\scal^2 = \Vert D\Vert^4, \quad \Vert \Ric_0 \Vert^2 = \frac{n-1}{n} \cdot \Vert D\Vert^4 \quad\textrm{and}\quad
	\Vert \Rm \Vert^2 = \unm\left(\Vert D \Vert^4 + \Vert D^2 \Vert^2 \right)\,.
\end{align*}

For getting a concrete estimate we choose $D$ diagonal, with eigenvalues $\lambda_1 = n-2$ and $\lambda_2 = \ldots = \lambda_{n-1} = -1$. After using \eqref{eqn:normcurvop} and a straightforward computation we obtain
\begin{align*}
    \left\Vert \Rm_I \right\Vert^2 + \left\Vert \Rm_{\Ric_0}\right\Vert^2 &=   
    	\tfrac12  (n-1)(n-2) (2n-3) \\
    	&\leq \frac{ (n-1)(n-2) (2n^2 - 8n + 9)}{2 (n-3)} = \frac1{n-3} \cdot \Vert \Rm \Vert^2,
\end{align*}
which shows the claim.
\end{proof}



\end{document}